\def\thesection{\arabic{section}}
\def\theequation{\thesection.\arabic{equation}}
\def\R{\mathbb{R}}
\DeclareMathOperator*{\esssup}{ess\,sup}
\DeclareMathOperator*{\essinf}{ess\,inf}
\DeclareMathOperator*{\osc}{osc}
\DeclareMathOperator*{\supp}{supp}
\newcommand{\noi} {\noindent}
\newcommand{\Tail} {\mathrm{Tail}}
\newcommand{\loc} {\mathrm{loc}}
\markboth{\small } {\small Mixed local and nonlocal quasilinear equations}
\def\theequation{\@arabic{\c@section}.\@arabic{\c@equation}}
\newtheorem{Theorem}{Theorem}[section]
\newtheorem{Lemma}[Theorem]{Lemma}
\newtheorem{Corollary}[Theorem]{Corollary}
\newtheorem{Remark}[Theorem]{Remark}
\newtheorem{Definition}[Theorem]{Definition}
\begin{document}

{\vspace{0.01in}}

\title
{\sc On the regularity theory for mixed local and nonlocal quasilinear elliptic equations}
\author{Prashanta Garain and Juha Kinnunen}

\maketitle

\begin{abstract}
We consider a combination of local and nonlocal $p$-Laplace equations and discuss several regularity properties of weak solutions.
More precisely, we establish local boundedness of weak subsolutions, local H\"older continuity of weak solutions, Harnack inequality for weak solutions and weak Harnack inequality for weak supersolutions.
We also discuss lower semicontinuity of weak supersolutions as well as upper semicontinuity of weak subsolutions.
Our approach is purely analytic and it is based on the De Giorgi-Nash-Moser theory, the expansion of positivity and estimates involving a tail term. 
The main results apply to sign changing solutions and capture both local and nonlocal features of the equation.
\medskip

\noi {Keywords: Regularity, mixed local and nonlocal $p$-Laplace equation, local boundedness, H\"older continuity, Harnack inequality, weak Harnack inequality, lower semicontinuity, energy estimates, De Giorgi-Nash-Moser theory, expansion of positivity}.

\medskip

\noi{\textit{2010 Mathematics Subject Classification:} 35B45, 35B65, 35D30, 35J92, 35R11}.
\end{abstract}

\section{Introduction}
In this article, we develop regularity theory of weak solutions for the problem
\begin{equation}\label{maineqn}
-\Delta_p u+\mathcal{L}(u)=0\text{ in }\Omega,
\quad 1<p<\infty,
\end{equation}
where $\Omega$ is a bounded domain in $\mathbb{R}^n$. The local $p$-Laplace operator is defined by
\begin{equation}\label{plap}
\Delta_p u=\text{div}(|\nabla u|^{p-2}\nabla u),
\end{equation}
and $\mathcal{L}$ is the nonlocal $p$-Laplace operator given by
\begin{equation}\label{fracplap}
\mathcal{L}(u)(x)=\text{P.V.}\int_{\mathbb{R}^n}|u(x)-u(y)|^{p-2}(u(x)-u(y))K(x,y)\,dy,
\end{equation}
where P.V. denotes the principal value. Here $K$ is a symmetric kernel in $x$ and $y$ such that
\begin{equation}\label{kernel}
\frac{\Lambda^{-1}}{|x-y|^{n+ps}}\leq K(x,y)\leq \frac{\Lambda}{|x-y|^{n+ps}}, 
\end{equation}
for some $\Lambda\geq 1$ and $0<s<1$.
Note that the $p$-Laplace operator $\Delta_p$ reduces to the classical Laplace operator $\Delta$ for $p=2$.
When $K(x,y)=|x-y|^{-(n+ps)}$, the operator $\mathcal{L}$ is the fractional $p$-Laplace operator $(-\Delta_p)^{s}$, which further reduces to the fractional Laplacian $(-\Delta)^s$ for $p=2$.
A prototype problem of type \eqref{maineqn} is
\begin{equation}\label{maineqnprot}
-\Delta_p u+(-\Delta_p)^s u=0, 
\quad 
1<p<\infty, 
\quad 
0<s<1.
\end{equation}

Before describing our contribution, let us discuss some of the known results. 
In the local case, the regularity theory for the $p$-Laplace equation $\Delta_p u=0$ has been studied extensively, for example, see Lindqvist \cite{Lind}, Mal\'{y}-Ziemer \cite{Maly} and references therein. 
For the nonlocal $p$-Laplace equation
 \begin{equation}\label{fracp}
 (-\Delta_p)^s u=0,
 \end{equation}
a scale invariant Harnack inequality holds for globally nonnegative solutions. However, such inequality fails when the solution changes sign as shown by Kassmann \cite{KassmanHarnack} in the case $p=2$, see also Dipierro-Savin-Valdinoci \cite{Valjems}. These results have been extended to $1<p<\infty$ by Di Castro-Kuusi-Palatucci \cite{KuusiHarnack}. 
In addition, a weak Harnack inequality for supersolutions of \eqref{fracp} has been discussed in \cite{KuusiHarnack}. 
They introduced a nonlocal tail term to compensate for the sign change in the Harnack estimates. 
Di Castro-Kuusi-Palatucci \cite{Kuusilocal} has studied local boundedness estimate along with H\"older continuity of solutions for \eqref{fracp}. See also Brasco-Lindgren-Scikorra \cite{Brascolind} for higher regularity results. For lower semicontinuity results of supersolutions, we refer to Korvenp\"{a}\"{a}-Kuusi-Lindgren \cite{lscfrac}.

For the mixed local and nonlocal case with $p=2$, i.e.
\begin{equation}\label{maineqnclas}
-\Delta u+(-\Delta)^s u=0,
\end{equation}
 Foondun \cite{Fo} has proved Harnack inequality and local H\"older continuity for nonnegative solutions. 
 Barlow-Bass-Chen-Kassmann \cite{BBCK} has obtained a Harnack inequality for the parabolic problem related to \eqref{maineqnclas}.
 Chen-Kumagai \cite{CK} has proved Harnack inequality and local H\"older continuity for the parabolic problem of \eqref{maineqnclas}.
 Such a parabolic Harnack estimate has been used to prove elliptic Harnack inequality for \eqref{maineqnclas} by Chen-Kim-Song-Vondra\v{c}ek in \cite{CKSV}.
 For more regularity results related to \eqref{maineqnclas}, we refer to Athreya-Ramachandran \cite{AR}, Chen-Kim-Song \cite{CKSheatest} and Chen-Kim-Song-Vondra\v{c}ek \cite{CKSVGreenest}. 
 The arguments in these articles combine probability and analysis. 
 Moreover, the Harnack inequality is proved only for globally nonnegative solutions.
Recently, an interior Sobolev regularity, a strong maximum principle and a symmetry property among many other qualitative properties of solutions to \eqref{maineqnclas} has been studied by Biagi-Dipierro-Valdinoci-Vecchi \cite{BSVV2, BSVV1}, Dipierro-Proietti Lippi-Valdinoci \cite{DPV20, DPV21} and Dipierro-Ros-Oton-Serra-Valdinoci \cite{DRXJV20}. 
There also exist regularity results for a nonhomogeneous analogue of  \eqref{maineqnclas}. More precisely, Athreya-Ramachandran \cite{AR} has proved Harnack inequality by probabilistic and analytic methods and authors in \cite{BSVV2, DPV20} has obtained boundedness, interior as well as boundary regularity results by analytic techniques. 
Biagi-Dipierro-Valdinoci-Vecchi \cite{En21} has obtained interior regularity results for a nonhomogeneous version of \eqref{maineqnprot}.
 
We establish the following regularity results for weak solutions (Definition \ref{subsupsolution}) of \eqref{maineqn} with $1<p<\infty$ and $0<s<1$.
 \begin{itemize}
 \item Local boundedness of weak subsolutions (Theorem \ref{thm1}). The argument is based on an energy estimate (Lemma \ref{energyest}), the Sobolev inequality and an iteration technique (Lemma \ref{iteration}). 
 \item Local H\"older continuity of weak solutions (Theorem \ref{Holder}). Local H\"older continuity is not a direct consequence of the Harnack inequality in the nonlocal case, see \cite{BBCK, Kassweakharnack}. 
We follow the approach of  Di Castro-Kuusi-Palatucci \cite{Kuusilocal} in which the local boundedness estimate and the logarithmic energy estimate (Lemma \ref{loglemma}) play an important role.
 \item Harnack inequality (Theorem \ref{thm2}) for weak solutions and weak Harnack inequality for weak supersolutions (Theorem \ref{thm3}). The expansion of positivity (Lemma \ref{DGLemma}), the local boundedness result and a tail estimate (Lemma \ref{tailest}) are crucial here.
\item Lower and upper semicontinuity of weak supersolutions and subsolutions, respectively (Theorem \ref{lscthm1} and Corollary \ref{uscthm}). This result is an adaptation to the mixed local and nonlocal case of a measure theoretic approach (Lemma \ref{lscthm}) in Liao \cite{Liao}. We refer to Banerjee-Garain-Kinnunen \cite{BGK} for an adaptation of this approach to a class of doubly nonlinear parabolic nonlocal problems.
 \end{itemize}
 
In contrast to the techniques from probability and analysis introduced in \cite{AR, BBCK, CKSheatest, CKSVGreenest, CKSV, CK, Fo}, our approach is purely analytic and based on the De Giorgi-Nash-Moser theory. 
To the best of our knowledge, all of our main results are new for $p\ne2$. Moreover, some of our main results (Theorem \ref{thm1}, Theorem \ref{thm3}, Theorem \ref{lscthm1} and Corollary \ref{uscthm}) seem to be new even for $p=2$. 
Furthermore, our approach applies to sign changing solutions.
In this respect, our Harnack estimate (Theorem \ref{thm2}) also extends the result of Chen-Kim-Song-Vondra\v{c}ek \cite{CKSV} and Foondun \cite{Fo} to sign changing solutions.
We introduce a tail term (Definition \ref{def.tail}), that differs from the one discussed in \cite{KuusiHarnack}, and a tail estimate (Lemma \ref{tailest}) 
that capture both local and nonlocal features of \eqref{maineqn}.
Technical novelties include an adaptation of the expansion of positivity technique (Lemma \ref{DGLemma}) for the mixed problem.

This article is organized as follows. In Section 2, we discuss some definitions and preliminary results. 
Necessary energy estimates are  proved in Section 3. In Sections 4 and 5, we establish the local boundedness and H\"older continuity results. 
In Sections 6 and 7, we obtain a tail estimate and the expansion of positivity property.
In Section 8, we prove Harnack and weak Harnack estimates. Finally, in Section 9, we establish the lower and upper semicontinuity results.
\section{Preliminaries}
In this section, we present some known results for fractional Sobolev spaces, see Di Nezza-Palatucci-Valdinoci \cite{Hitchhiker'sguide} for more details.

\begin{Definition}
Let $1<p<\infty$ and $0<s<1$. Assume that $\Omega$ is a domain in $\mathbb R^n$. 
The fractional Sobolev space $W^{s,p}(\Omega)$ is defined by
$$
W^{s,p}(\Omega)=\bigg\{u\in L^p(\Omega):\int_{\Omega}\int_{\Omega}\frac{|u(x)-u(y)|^p}{|x-y|^{n+ps}}\,dx\,dy<\infty\bigg\}
$$
and endowed with the norm
$$
\|u\|_{W^{s,p}(\Omega)}=\bigg(\int_{\Omega}|u(x)|^p\,dx+\int_{\Omega}\int_{\Omega}\frac{|u(x)-u(y)|^p}{|x-y|^{n+ps}}\,dx\,dy\bigg)^\frac{1}{p}.
$$
The fractional Sobolev space with zero boundary value $W_{0}^{s,p}(\Omega)$ consists of functions $u\in W^{s,p}(\mathbb{R}^n)$ with 
$u=0$ on $\mathbb{R}^n\setminus\Omega$.
\end{Definition}

Both $W^{s,p}(\Omega)$ and $W_{0}^{s,p}(\Omega)$ are reflexive Banach spaces, see \cite{Hitchhiker'sguide}. 
The space $W^{s,p}_{\loc}(\Omega)$ is defined by requiring that a function belongs to $W^{s,p}(\Omega')$ for every $\Omega'\Subset\Omega$,
where $\Omega'\Subset\Omega$ denotes that $\overline{\Omega'}$ is a compact subset of $\Omega$.
Throughout, we write $c$ or $C$ to denote a positive constant which may vary from line to line or even in the same line. 
The dependencies on parameters are written in the parentheses.

The next result asserts that the standard Sobolev space is continuously embedded in the fractional Sobolev space, see \cite[Proposition 2.2]{Hitchhiker'sguide}.
The argument applies a smoothness property of $\Omega$ so that we can extend functions from $W^{1,p}(\Omega)$ to $W^{1,p}(\R^n)$ and that the extension operator is bounded.

\begin{Lemma}\label{locnon}
Let $\Omega$ be a smooth bounded domain in $\mathbb{R}^n$, $1<p<\infty$ and $0<s<1$. 
There exists a positive constant $C=C(n,p,s)$ such that
$
||u||_{W^{s,p}(\Omega)}\leq C||u||_{W^{1,p}(\Omega)}
$
for every $u\in W^{1,p}(\Omega)$.
\end{Lemma}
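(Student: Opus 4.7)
The plan is to reduce the inequality to a global estimate on $\mb R^n$ by means of a Sobolev extension, and then to control the Gagliardo seminorm of the extension by splitting the diagonal singularity into a near part (handled by the gradient) and a far part (handled by the $L^p$ norm). First, since $\Om$ is smooth and bounded, the classical Stein extension theorem furnishes a bounded linear operator $E\colon W^{1,p}(\Om)\to W^{1,p}(\mb R^n)$ with $\|Eu\|_{W^{1,p}(\mb R^n)}\le C\|u\|_{W^{1,p}(\Om)}$. Setting $\tilde u=Eu$, we have $\|u\|_{L^p(\Om)}\le \|\tilde u\|_{L^p(\mb R^n)}$, and the Gagliardo double integral over $\Om\times\Om$ is dominated by the one over $\mb R^n\times\mb R^n$. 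Hence it is enough to prove the universal inequality $\|\tilde u\|_{W^{s,p}(\mb R^n)}\le C(n,p,s)\|\tilde u\|_{W^{1,p}(\mb R^n)}$.

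For the universal estimate I would split the Gagliardo integral at $|x-y|=1$. On the far set $\{|x-y|\ge 1\}$, the crude bound $|\tilde u(x)-\tilde u(y)|^p\le 2^{p-1}(|\tilde u(x)|^p+|\tilde u(y)|^p)$ combined with Fubini and the convergent tail integral $\int_{|h|\ge 1}|h|^{-n-ps}\,dh<\infty$ produces a contribution bounded by $C(n,p,s)\|\tilde u\|_{L^p(\mb R^n)}^p$. On the near set $\{|x-y|<1\}$, writing $h=y-x$ and using the fundamental theorem of calculus gives
\[
\tilde u(x+h)-\tilde u(x)=\int_0^1 \na\tilde u(x+th)\cdot h\,dt,
\]
and Jensen's inequality in $t\in[0,1]$ yields
\[
|\tilde u(x+h)-\tilde u(x)|^p\le |h|^p\int_0^1|\na\tilde u(x+th)|^p\,dt.
\]
Interchanging the order of integration and using the translation invariance of Lebesgue measure (change of variable $x\mapsto x-th$ for each fixed $t,h$), this contribution is controlled by $\|\na\tilde u\|_{L^p(\mb R^n)}^p\int_{|h|<1}|h|^{p(1-s)-n}\,dh$, and the last integral is finite precisely because $p(1-s)>0$, with a value depending only on $n,p,s$.

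Combining the two pieces with the extension estimate produces the claimed inequality. The only nontrivial external input is the Stein extension theorem for smooth bounded domains; everything else is a routine consequence of the fundamental theorem of calculus, Jensen's inequality and Fubini. I do not expect a serious obstacle. The one subtlety to flag is that the constant asserted as $C=C(n,p,s)$ in the statement tacitly absorbs the norm of the extension operator, which depends on $\Om$ through its boundary regularity, so the dependence on $\Om$ is hidden in that step rather than in the core Gagliardo computation.
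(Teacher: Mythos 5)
Your proposal is correct and takes essentially the same route as the paper, which does not reprove the lemma but invokes \cite[Proposition 2.2]{Hitchhiker'sguide}: extend $u$ to $W^{1,p}(\mathbb{R}^n)$ using the smoothness of $\partial\Omega$ and the boundedness of the extension operator, then split the Gagliardo seminorm at $|x-y|=1$, controlling the near part by the gradient (difference-quotient estimate) and the far part by the $L^p$ norm. Your closing remark that the stated constant $C(n,p,s)$ tacitly absorbs the extension operator norm, and hence a dependence on $\Omega$, is a fair observation that applies equally to the cited result.
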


The following result for the fractional Sobolev spaces with zero boundary value follows from \cite[Lemma 2.1]{Silvaarxiv}.
The main difference compared to Lemma \ref{locnon} is that the result holds for any bounded domain, 
since for the Sobolev spaces with zero boundary value, we always have zero extension to the complement.

\begin{Lemma}\label{locnon1}
Let $\Omega$ be a bounded domain in $\mathbb{R}^n$, $1<p<\infty$ and $0<s<1$. 
There exists a positive constant $C=C(n,p,s,\Omega)$ such that
\[
\int_{\mathbb{R}^n}\int_{\mathbb{R}^n}\frac{|u(x)-u(y)|^p}{|x-y|^{n+ps}}\,dx\, dy
\leq C\int_{\Omega}|\nabla u(x)|^p\,dx
\]
for every $u\in W_0^{1,p}(\Omega)$.
Here we consider the zero extension of $u$ to the complement of $\Omega$.
\end{Lemma}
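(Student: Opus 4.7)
The plan is to reduce to Lemma \ref{locnon} by enlarging $\Omega$ to a smooth bounded domain. Since $\Omega$ is bounded, I can fix a ball $B_R$ with $\Omega\Subset B_R$. Extending $u\in W_0^{1,p}(\Omega)$ by zero gives $u\in W_0^{1,p}(B_R)\subset W^{1,p}(B_R)$ with $\|u\|_{W^{1,p}(B_R)}^p=\int_\Omega(|u|^p+|\nabla u|^p)\,dx$. A Poincar\'e inequality in $W_0^{1,p}(\Omega)$ then bounds $\|u\|_{W^{1,p}(B_R)}^p$ by $C(p,\Omega)\int_\Omega|\nabla u|^p\,dx$, reducing the problem to controlling the full double integral over $\R^n\times\R^n$ by $\|u\|_{W^{1,p}(B_R)}^p$.

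Next, I split $\R^n\times\R^n$ into four regions using $B_R$ and $\R^n\setminus B_R$. The piece over $(\R^n\setminus B_R)\times(\R^n\setminus B_R)$ vanishes since $u\equiv0$ there. The piece over $B_R\times B_R$ is precisely $\iint_{B_R\times B_R}|u(x)-u(y)|^p|x-y|^{-n-ps}\,dx\,dy\leq\|u\|_{W^{s,p}(B_R)}^p$, and Lemma \ref{locnon} applied to the smooth domain $B_R$ bounds this by $C(n,p,s)\|u\|_{W^{1,p}(B_R)}^p$.

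The two remaining mixed pieces are symmetric; for one of them, since $u(y)=0$ when $y\notin B_R$, it reduces to
\[
\int_{\Omega}|u(x)|^p\Bigl(\int_{\R^n\setminus B_R}\frac{dy}{|x-y|^{n+ps}}\Bigr)dx.
\]
For $x\in\Omega$ and $y\in\R^n\setminus B_R$, one has $|x-y|\geq d_0:=\mathrm{dist}(\Omega,\partial B_R)>0$, so the inner integral is uniformly bounded by a constant depending on $n,p,s,d_0$ (by passing to polar coordinates around $x$ and using $|x-y|\geq d_0$ to integrate $|x-y|^{-n-ps}$ over the exterior of a ball of radius $d_0$). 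This gives a bound of the form $C(n,p,s,\Omega)\int_\Omega|u|^p\,dx$, which is absorbed by Poincar\'e.

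The only delicate point is that $\Omega$ is merely bounded, with no regularity assumed; the embedding trick $\Omega\Subset B_R$ handles this, because the smoothness needed in Lemma \ref{locnon} is transferred to the auxiliary ball while the zero extension keeps $u$ in $W^{1,p}(B_R)$. The other mild technicality is the tail integral $\int_{\R^n\setminus B_R}|x-y|^{-n-ps}dy$, whose uniform bound over $x\in\Omega$ depends on $\mathrm{dist}(\Omega,\partial B_R)$ and hence on $\Omega$, which is allowed since the constant in the lemma may depend on $\Omega$. Combining the three estimates yields the stated inequality with a constant $C(n,p,s,\Omega)$.
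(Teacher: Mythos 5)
Your proof is correct. Note that the paper gives no internal argument for this lemma: it simply attributes the result to Lemma 2.1 of Buccheri, da Silva and de Miranda, so your contribution is a self-contained proof built from the paper's own toolkit. Your route enlarges $\Omega$ to a ball $B_R$ with $\overline{\Omega}\subset B_R$, uses that the zero extension of $u\in W_0^{1,p}(\Omega)$ lies in $W_0^{1,p}(B_R)$, applies Lemma \ref{locnon} on the smooth domain $B_R$ to control the $B_R\times B_R$ interactions, discards the exterior--exterior block since $u\equiv 0$ there, and bounds each mixed block by
\[
\int_{\Omega}|u(x)|^p\Bigl(\int_{\R^n\setminus B_R}\frac{dy}{|x-y|^{n+ps}}\Bigr)dx
\le C(n,p,s)\,d_0^{-ps}\int_{\Omega}|u|^p\,dx,
\qquad d_0=\mathrm{dist}(\Omega,\partial B_R)>0,
\]
after which the $L^p$ terms are absorbed via the Poincar\'e (Friedrichs) inequality on the bounded set $\Omega$, which indeed needs no boundary regularity (and follows, for instance, from Lemma \ref{c.omega_sobo} and H\"older's inequality). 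The two mild points you flag are handled correctly: the smoothness required by Lemma \ref{locnon} is carried by the auxiliary ball rather than by $\Omega$, and the dependence of the tail bound and of the Poincar\'e constant on $\Omega$ (through $R$ and $d_0$) is consistent with the stated constant $C(n,p,s,\Omega)$. What the citation buys the paper is brevity; what your argument buys is independence from the external reference, at the modest cost of introducing the auxiliary ball and the splitting of $\R^n\times\R^n$ into four regions.
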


The following version of the Gagliardo-Nirenberg-Sobolev inequality will be useful for us,
see \cite[Corollary 1.57]{Maly}.
\begin{Lemma}\label{c.omega_sobo}
Let $1<p<\infty$, $\Omega$ be an open set in $\R^n$ with $\lvert\Omega\rvert<\infty$ and
\begin{equation}\label{kappa}
\kappa=
\begin{cases}
\frac{n}{n-p},&\text{if}\quad 1<p<n,\\
2,&\text{if}\quad p\geq n.
\end{cases}
\end{equation}
There exists a positive constant $C=C(n,p)$ such that
\begin{equation}\label{e.friedrich}
\biggl(\int_\Omega \lvert u(x)\rvert^{\kappa p}\,dx\biggr)^{\frac{1}{\kappa p}}
\le C \lvert\Omega\rvert^{\frac{1}{n}-\frac{1}{p}+\frac{1}{\kappa p}} \biggl(\int_\Omega \lvert \nabla u(x)\rvert^{p}\,dx\biggr)^{\frac{1}{p}}
\end{equation}
for every $u\in W_0^{1,p}(\Omega)$.
\end{Lemma}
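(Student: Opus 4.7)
The plan is to split into two cases based on the relation between $p$ and $n$, reducing in both cases to the classical Gagliardo-Nirenberg-Sobolev inequality on $W_0^{1,q}(\Omega)$ for an appropriate $q\in(1,n)$, combined with Hölder's inequality to produce the precise power of $\lvert\Omega\rvert$.

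For the case $1<p<n$, I would simply observe that $\kappa p=\frac{np}{n-p}=p^{*}$ is the standard Sobolev conjugate, so the classical Sobolev embedding $W_0^{1,p}(\Omega)\hookrightarrow L^{p^{*}}(\Omega)$ gives \eqref{e.friedrich} at once. A direct arithmetic check yields
\[
\frac{1}{n}-\frac{1}{p}+\frac{1}{\kappa p}
=\frac{1}{n}-\frac{1}{p}+\frac{n-p}{np}=0,
\]
so the volume factor is just $1$ and the constant depends only on $n$ and $p$, as required.

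For the case $p\geq n$, the exponent $p$ is too large for the direct Sobolev embedding (and one cannot embed into $L^{\kappa p}=L^{2p}$ directly when $p=n$ or $p>n$ via the standard formula). I would choose
\[
q=\frac{2pn}{n+2p},
\]
which belongs to $(1,n)$ since $2p<n+2p$, and satisfies $q^{*}=\frac{nq}{n-q}=2p=\kappa p$. Apply the classical Sobolev inequality to $u\in W_0^{1,q}(\Omega)$, which is a legitimate membership because $u\in W_0^{1,p}(\Omega)$ with $\lvert\Omega\rvert<\infty$ and $q<p$, together with Hölder:
\[
\biggl(\int_\Omega \lvert u\rvert^{\kappa p}\biggr)^{\frac{1}{\kappa p}}
\le C(n,q)\biggl(\int_\Omega \lvert \nabla u\rvert^{q}\biggr)^{\frac{1}{q}}
\le C(n,p)\,\lvert\Omega\rvert^{\frac{1}{q}-\frac{1}{p}}\biggl(\int_\Omega \lvert \nabla u\rvert^{p}\biggr)^{\frac{1}{p}}.
\]
A short computation then confirms $\tfrac{1}{q}-\tfrac{1}{p}=\tfrac{n+2p}{2pn}-\tfrac{1}{p}=\tfrac{1}{n}-\tfrac{1}{2p}=\tfrac{1}{n}-\tfrac{1}{p}+\tfrac{1}{\kappa p}$, matching the claimed power of $\lvert\Omega\rvert$.

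The main technical point (rather than a real obstacle) is the bookkeeping of the Hölder exponent so that the final power of $\lvert\Omega\rvert$ lands exactly on $\tfrac{1}{n}-\tfrac{1}{p}+\tfrac{1}{\kappa p}$ in a form that is uniform across both cases. Since $W_0^{1,p}(\Omega)$-functions are extended by zero outside $\Omega$, no regularity of $\partial\Omega$ is needed, in contrast to Lemma~\ref{locnon}; this is what allows the argument to go through for an arbitrary bounded open set.
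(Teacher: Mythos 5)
Your proof is correct for $n\ge 2$ and takes a genuinely different route from the paper: the paper does not prove this lemma at all, but simply cites it as \cite[Corollary~1.57]{Maly}, whereas you give a self-contained derivation by reducing to the classical Sobolev embedding $W_0^{1,q}\hookrightarrow L^{q^*}$ for a carefully chosen subcritical exponent $q<n$, followed by H\"older on $\nabla u$ over the finite-measure set $\Omega$. Your arithmetic is right in both cases: for $1<p<n$ the volume exponent vanishes and the statement is the Sobolev embedding itself, and for $p\ge n$ the choice $q=\tfrac{2pn}{n+2p}$ satisfies $q^*=2p=\kappa p$, $q<p$, $q<n$, and the H\"older exponent $\tfrac1q-\tfrac1p$ simplifies exactly to $\tfrac1n-\tfrac1p+\tfrac1{\kappa p}$. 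What your approach buys is transparency and the observation that only zero-extension (no boundary regularity) is needed; what the citation buys is that Mal\'y--Ziemer state the estimate in precisely this normalized form with the $\lvert\Omega\rvert$ factor built in, so no case split is required. One small caveat: when $n=1$ your exponent $q=\tfrac{2p}{1+2p}$ lies in $(0,1)$, so the Sobolev inequality for $W^{1,q}$ is unavailable and your argument does not apply; there one must instead use the one-dimensional fundamental theorem of calculus and H\"older, giving $\lVert u\rVert_\infty\le \lvert\Omega\rvert^{1-1/p}\lVert u'\rVert_p$ and then $\bigl(\int_\Omega\lvert u\rvert^{2p}\bigr)^{1/(2p)}\le\lvert\Omega\rvert^{1/(2p)}\lVert u\rVert_\infty$, which yields the same power of $\lvert\Omega\rvert$. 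For $n\ge2$, which is the setting of interest in this paper, your proof is complete as written.
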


Next, we define the notion of weak solution to \eqref{maineqn}.
 
\begin{Definition}\label{subsupsolution}
A function $u\in L^{\infty}(\mathbb{R}^n)$ is a weak subsolution of \eqref{maineqn} if $u\in W_{\loc}^{1,p}(\Omega)$ and for every $\Omega'\Subset\Omega$ and nonnegative test functions $\phi\in W_{0}^{1,p}(\Omega')$, we have
\begin{equation}\label{weaksubsupsoln}
\begin{gathered}
\int_{\Omega'}|\nabla u|^{p-2}\nabla u\cdot\nabla\phi\,dx+\int_{\mathbb{R}^n}\int_{\mathbb{R}^n}\mathcal{A}(u(x,y)){(\phi(x)-\phi(y))}\,d\mu\leq0,
\end{gathered}
\end{equation}
where
\[
\mathcal{A}(u(x,y))=|u(x)-u(y)|^{p-2}(u(x)-u(y))
\quad\text{and}\quad
d\mu=K(x,y)\,dx\,dy.
\]
Analogously, a function $u$  is a weak supersolution of  \eqref{maineqn} if the integral in \eqref{weaksubsupsoln} is nonnegative for every nonnegative test functions $\phi\in W_{0}^{1,p}(\Omega')$.
A function $u$ is a weak solution of \eqref{maineqn} if the equality holds in \eqref{weaksubsupsoln} for every $\phi\in W_{0}^{1,p}(\Omega')$ without a sign restriction.
\end{Definition}

\begin{Remark}\label{rkreg}
The boundedness assumption, together with Lemma \ref{locnon} and Lemma \ref{locnon1}, ensures that  Definition \ref{subsupsolution} is well stated and
 the tail that will be defined in \eqref{loctail} is finite. 
 Under the assumption that the tail in \eqref{loctail} is bounded, our main results Theorem \ref{thm1}, Theorem \ref{Holder}, Theorem \ref{thm2} and Theorem \ref{thm3} hold true without the a priori boundedness assumption on the function. 
In such a case, the local boundedness follows from Theorem \ref{thm1}.
\end{Remark}

It follows directly from Definition \ref{subsupsolution} that $u$ is a weak subsolution of  \eqref{maineqn} if and only if $-u$ is a weak supersolution of \eqref{maineqn}. 
Moreover, for any $c\in\mathbb{R}$, $u+c$ is a weak solution of \eqref{maineqn} if and only if $u$ is a weak solution of \eqref{maineqn}. 
We discuss some further structural properties of weak solutions below.
We denote the positive and negative parts of $a\in\R$ by $a_+=\max\{a,0\}$ and $a_-=\max\{-a,0\}$, respectively. Also, the barred integral sign denotes the corresponding integral average. 

\begin{Lemma}\label{Solsubsup}
A function $u$ is a weak solution of  \eqref{maineqn} if and only if $u$ is a weak subsolution and a weak supersolution of \eqref{maineqn}.
\end{Lemma}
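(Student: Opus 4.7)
The plan is to handle the two directions separately. The forward implication (weak solution implies both weak subsolution and weak supersolution) is immediate from the definitions: since the weak solution equality in \eqref{weaksubsupsoln} is assumed to hold for every $\phi\in W_0^{1,p}(\Omega')$ without a sign restriction, it holds in particular for every nonnegative $\phi$, and a quantity that equals zero is trivially both $\leq 0$ and $\geq 0$.

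For the converse, starting from an arbitrary, possibly sign-changing test function $\phi\in W_0^{1,p}(\Omega')$, I would split it as $\phi=\phi_+-\phi_-$ with $\phi_\pm=\max\{\pm\phi,0\}$, and observe that by the lattice property of $W_0^{1,p}(\Omega')$ both $\phi_+$ and $\phi_-$ are nonnegative and still belong to $W_0^{1,p}(\Omega')$. Testing the weak subsolution inequality against $\phi_+$ yields $\leq 0$ for the corresponding expression in \eqref{weaksubsupsoln}, while testing the weak supersolution inequality against the same $\phi_+$ yields $\geq 0$; together these force
\begin{equation*}
\int_{\Omega'}|\nabla u|^{p-2}\nabla u\cdot\nabla\phi_+\,dx+\int_{\mathbb{R}^n}\int_{\mathbb{R}^n}\mathcal{A}(u(x,y))(\phi_+(x)-\phi_+(y))\,d\mu=0,
\end{equation*}
and the analogous identity holds for $\phi_-$. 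Because both the local gradient term and the nonlocal difference $\phi(x)-\phi(y)$ are linear in the test function, subtracting the two identities reproduces \eqref{weaksubsupsoln} tested against $\phi=\phi_+-\phi_-$ and shows that it vanishes, which is precisely the weak solution property.

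The only point that requires a moment of care, and which I would flag as the main (minor) obstacle, is the admissibility of $\phi_\pm$ as test functions: namely, that $\phi\in W_0^{1,p}(\Omega')$ implies $\phi_\pm\in W_0^{1,p}(\Omega')$, with $\nabla\phi_+=\nabla\phi\cdot\chi_{\{\phi>0\}}$ and $\nabla\phi_-=-\nabla\phi\cdot\chi_{\{\phi<0\}}$ almost everywhere. This is the standard Banach lattice structure of Sobolev spaces with zero boundary values. Finiteness of all the integrals involved follows from the a priori boundedness of $u$ together with Lemma \ref{locnon} and Lemma \ref{locnon1}, as already noted in Remark \ref{rkreg}.
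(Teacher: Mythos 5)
Your proof is correct and follows essentially the same route as the paper: decompose $\phi=\phi_+-\phi_-$, use the lattice property of $W_0^{1,p}(\Omega')$ to justify $\phi_\pm$ as admissible nonnegative test functions, and exploit linearity of the functional in $\phi$. The only cosmetic difference is that you apply both the sub- and supersolution inequalities to $\phi_+$ (and then to $\phi_-$) to obtain equality directly for each part, whereas the paper applies the subsolution inequality to $\phi_+$ and the supersolution inequality to $\phi_-$, subtracts to get $\leq 0$ for $\phi$, and then replaces $\phi$ by $-\phi$ for the reverse inequality; both variants are standard and equivalent.
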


\begin{proof}
It follows immediately from Definition \ref{subsupsolution}, that a weak solution $u$ of \eqref{maineqn} is a weak subsolution and a weak supersolution of \eqref{maineqn}.
Conversely, assume that $u$ is both weak subsolution and weak supersolution of \eqref{maineqn}. 
Let $\Omega'\Subset\Omega$ and $\phi\in W_0^{1,p}(\Omega')$. Then $\phi_{+}$ and $\phi_{-}$ belong to $W_0^{1,p}(\Omega')$. Since $u$ is a weak subsolution, we have 
\begin{equation}\label{Sub}
\int_{\Omega'}|\nabla u|^{p-2}\nabla u\cdot\nabla\phi_{+}\,dx+\int_{\mathbb{R}^n}\int_{\mathbb{R}^n}\mathcal{A}(u(x,y))(\phi_+(x)-\phi_{+}(y))\,d\mu\leq 0.
\end{equation}
Analogously, since $u$ is a weak supersolution, we have
\begin{equation}\label{Sup}
\int_{\Omega'}|\nabla u|^{p-2}\nabla u\cdot\nabla\phi_{-}\,dx+\int_{\mathbb{R}^n}\int_{\mathbb{R}^n}\mathcal{A}(u(x,y))(\phi_{-}(x)-\phi_{-}(y))\,d\mu\geq 0.
\end{equation}
Subtracting \eqref{Sub} and \eqref{Sup} and using $\phi=\phi_{+}-\phi_{-}$, we obtain
\begin{equation*}
\int_{\Omega'}|\nabla u|^{p-2}\nabla u\cdot\nabla\phi\,dx+\int_{\mathbb{R}^n}\int_{\mathbb{R}^n}\mathcal{A}(u(x,y))(\phi(x)-\phi(y))\,d\mu\leq 0.
\end{equation*}
The reverse inequality holds by replacing $\phi$ with $-\phi$. Hence, $u$ is a weak solution of  \eqref{maineqn}.
\end{proof}

Next, we show that the property of being a weak subsolution is preserved under taking the positive part. Then, it follows immediately that $u_-$ is a weak subsolution of \eqref{maineqn}, whenever $u$ is a weak supersolution of \eqref{maineqn}. 

\begin{Lemma}\label{cutoffsub}
Assume that $u$ is a weak subsolution of \eqref{maineqn}. 
Then $u_+$ is  a weak subsolution of \eqref{maineqn}. 
\end{Lemma}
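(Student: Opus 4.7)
The plan is to test the subsolution inequality for $u$ with an admissible regularization of $\phi\,\chi_{\{u>0\}}$ and then pass to the limit. Fix $\Omega'\Subset\Omega$ and a nonnegative $\phi\in W_0^{1,p}(\Omega')$. For $\varepsilon>0$, set $\eta_\varepsilon(t)=\min(t_+/\varepsilon,1)$, a Lipschitz approximation of $\chi_{(0,\infty)}$, and take $\phi_\varepsilon=\phi\,\eta_\varepsilon(u)$. Since $u\in W^{1,p}_{\loc}(\Omega)$ and $\eta_\varepsilon$ is bounded and Lipschitz, $\eta_\varepsilon(u)\in W^{1,p}(\Omega')$; because $\phi$ has zero trace on $\partial\Omega'$, the product $\phi_\varepsilon$ lies in $W_0^{1,p}(\Omega')$ and is nonnegative, hence admissible in \eqref{weaksubsupsoln}. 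The product rule $\nabla\phi_\varepsilon=\eta_\varepsilon(u)\nabla\phi+\phi\,\eta_\varepsilon'(u)\nabla u$ together with $\eta_\varepsilon'\ge 0$ lets me discard the nonnegative contribution $\int_{\Omega'}\phi\,\eta_\varepsilon'(u)|\nabla u|^p\,dx$, leaving
\[
\int_{\Omega'}\eta_\varepsilon(u)|\nabla u|^{p-2}\nabla u\cdot\nabla\phi\,dx
+\int_{\R^n}\!\int_{\R^n}\mathcal{A}(u(x,y))\bigl(\phi(x)\eta_\varepsilon(u(x))-\phi(y)\eta_\varepsilon(u(y))\bigr)\,d\mu\le 0.
\]

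Next I would let $\varepsilon\to 0^+$. On the local side, $\eta_\varepsilon(u)\to\chi_{\{u>0\}}$ pointwise, and since $\nabla u_+=\chi_{\{u>0\}}\nabla u$ almost everywhere, dominated convergence produces $\int_{\Omega'}|\nabla u_+|^{p-2}\nabla u_+\cdot\nabla\phi\,dx$. For the nonlocal integral I need an integrable majorant for
$|\mathcal{A}(u(x,y))|(\phi(x)+\phi(y))K(x,y)\le |u(x)-u(y)|^{p-1}(\phi(x)+\phi(y))K(x,y).$
On $\Omega'\times\Omega'$ this is controlled via H\"older's inequality using the $W^{s,p}(\Omega')$ seminorm of $u$ (finite by Lemma \ref{locnon}) and of $\phi$; on the off-diagonal parts $\phi$ vanishes outside $\Omega'$, and the boundedness of $u$ together with the kernel bound \eqref{kernel} makes the remaining pieces integrable (this is the finiteness noted in Remark \ref{rkreg}). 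Dominated convergence then yields the limit
\[
\int_{\R^n}\!\int_{\R^n}\mathcal{A}(u(x,y))\bigl(\phi(x)\chi_{\{u(x)>0\}}-\phi(y)\chi_{\{u(y)>0\}}\bigr)\,d\mu.
\]

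The crucial step, and the main obstacle, is the pointwise inequality
\[
\mathcal{A}(u(x,y))\bigl(\phi(x)\chi_{\{u(x)>0\}}-\phi(y)\chi_{\{u(y)>0\}}\bigr)
\ge \mathcal{A}(u_+(x,y))(\phi(x)-\phi(y)),
\]
which I would verify by a case analysis on the signs of $a=u(x)$ and $b=u(y)$. If $a,b>0$, both sides coincide; if $a,b\le 0$, both sides vanish; in the mixed case $a>0\ge b$, the inequality reduces to $\bigl((a-b)^{p-1}-a^{p-1}\bigr)\phi(x)+a^{p-1}\phi(y)\ge 0$, which holds because $a-b\ge a\ge 0$ and $\phi\ge 0$, and the case $b>0\ge a$ is symmetric. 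Integrating this pointwise inequality and combining it with the limit of the tested inequality yields
\[
\int_{\Omega'}|\nabla u_+|^{p-2}\nabla u_+\cdot\nabla\phi\,dx
+\int_{\R^n}\!\int_{\R^n}\mathcal{A}(u_+(x,y))(\phi(x)-\phi(y))\,d\mu\le 0,
\]
which is exactly the subsolution inequality for $u_+$. The most delicate part is keeping track of the dominated convergence on the nonlocal integral and handling the mixed-sign cases of the algebraic inequality; the rest is essentially a product rule and a limit.
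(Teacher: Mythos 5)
Your test function $\phi\,\eta_\varepsilon(u)$ is, up to the reparametrization $\varepsilon=1/k$, exactly the paper's $u_k\phi$ with $u_k=\min\{ku_+,1\}$, the discarded local term is indeed nonnegative, and your limiting pointwise inequality is correct in all four sign cases. The genuine gap is the passage to the limit $\varepsilon\to0$ in the nonlocal term \emph{before} the sign analysis. The majorant you propose, $|u(x)-u(y)|^{p-1}(\phi(x)+\phi(y))K(x,y)$, is in general not integrable near the diagonal: since $\phi(x)+\phi(y)$ does not vanish as $y\to x$, one would need $\int_{B_1(x)}|u(x)-u(y)|^{p-1}|x-y|^{-n-ps}\,dy<\infty$, and already for a smooth $u$ with $\nabla u(x)\neq 0$ this inner integral diverges whenever $ps\geq p-1$ (for instance $p=2$, $s\geq\tfrac12$); neither $u\in W^{1,p}_{\loc}\cap L^{\infty}(\R^n)$ nor $u\in W^{s,p}(\Omega')$ controls it, and H\"older against the $W^{s,p}(\Omega')$ seminorm of $\phi$ is not available because your majorant carries the sum $\phi(x)+\phi(y)$ rather than the difference $\phi(x)-\phi(y)$. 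For the same reason it is not even clear that your limiting quantity $\int_{\R^n}\int_{\R^n}\mathcal{A}(u(x,y))\big(\phi(x)\chi_{\{u(x)>0\}}-\phi(y)\chi_{\{u(y)>0\}}\big)\,d\mu$ is absolutely convergent, since $\phi\,\chi_{\{u>0\}}$ has no fractional Sobolev regularity. So the dominated convergence step, as justified, fails on a nonempty range of the parameters $(p,s)$ admitted by the paper.

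The repair is precisely the reordering used in the paper: perform the case analysis at the truncated level, i.e. show (as in \eqref{cutoffI2casexgeqy}--\eqref{cutoffI2casexleqy}) that for $u(x)>u(y)$ one has $\mathcal{A}(u(x,y))\big(\phi(x)\eta_\varepsilon(u(x))-\phi(y)\eta_\varepsilon(u(y))\big)\geq (u_+(x)-u_+(y))^{p-1}\eta_\varepsilon(u(x))(\phi(x)-\phi(y))$, and symmetrically for $u(x)<u(y)$, and only then let $\varepsilon\to0$. The integrand to which the limit is applied then contains the difference $\phi(x)-\phi(y)$ and is dominated by $|u_+(x)-u_+(y)|^{p-1}|\phi(x)-\phi(y)|K(x,y)$, which is integrable by H\"older together with Lemma \ref{locnon} (or Lemma \ref{locnon1}), the support of $\phi$ and $u\in L^{\infty}(\R^n)$, so dominated convergence is legitimate and yields \eqref{cutoffest} directly. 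A smaller point: for a general, possibly unbounded $\phi\in W_0^{1,p}(\Omega')$ the product $\phi\,\eta_\varepsilon(u)$ need not belong to $W^{1,p}(\Omega')$, because $\phi\,\eta_\varepsilon'(u)\nabla u$ is a product of two merely $L^p$-type factors; you should first take $\phi\in C_c^{\infty}(\Omega')$ nonnegative and recover the general case by density at the end, exactly as the paper does.
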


\begin{proof}
Consider functions $u_k=\min\{ku_{+},1\}$, $k=1,2,\dots$.
Then $(u_k)_{k=1}^\infty$ is an increasing sequence of functions in $W^{1,p}_{\loc}(\Omega)$ and $0\leq u_k\leq 1$ for every $k\in\mathbb{N}$. 
Let $\phi\in C_c^{\infty}(\Omega')$ be a nonnegative function. By choosing $u_k\phi\in W_0^{1,p}(\Omega')$ as a test function in \eqref{weaksubsupsoln}, we obtain
\begin{equation}\label{cutofftest}
0\geq\int_{\Omega'}|\nabla u|^{p-2}\nabla u\cdot\nabla(u_k\phi)\,dx+\int_{\mathbb{R}^n}\int_{\mathbb{R}^n}\mathcal{A}(u(x,y))(u_k(x)\phi(x)-u_k(y)\phi(y))\,d\mu
=I_1+I_2.
\end{equation}
\textbf{Estimate of $I_1$:} We observe that
\begin{equation}\label{cutoffI1}
I_1=\int_{\Omega'}|\nabla u|^{p-2}\nabla u\cdot\nabla(u_k\phi)\,dx=k\int_{\Omega'\cap\{0<u<\frac{1}{k}\}}\phi|\nabla u|^p\,dx+\int_{\Omega'}u_k|\nabla u|^{p-2}\nabla u\cdot\nabla\phi\,dx.
\end{equation}
\textbf{Estimate of $I_2$:} Let $x,y\in\mathbb{R}^n$. First, we consider the case when $u(x)>u(y)$.\\
If $u_k(x)=0$, then $u_k(y)=0$. Hence, we have
\begin{equation}\label{cutoffI2case1}
(u(x)-u(y))^{p-1}(u_k(x)\phi(x)-u_k(y)\phi(y))=0.
\end{equation}
If $u_k(y)>0$, then $u(y)=u_+(y)$. 
Under the assumption $u(x)>u(y)$, it follows that $u(x)=u_{+}(x)$ and $u_k(x)>u_{k}(y)$.
This implies that
\begin{equation}\label{cutoffI2case2}
\begin{split}
(u(x)-u(y))^{p-1}(u_k(x)\phi(x)-u_k(y)\phi(y))
&=(u_+(x)-u_{+}(y))^{p-1}(u_k(x)\phi(x)-u_k(y)\phi(y))\\
&\geq (u_+(x)-u_{+}(y))^{p-1}u_k(x)(\phi(x)-\phi(y)).
\end{split}
\end{equation}
If $u_k(y)=0$ and $u_k(x)>0$, then $u(x)>0\geq u(y)$ and hence
\begin{equation}\label{cutoffI2case3}
\begin{split}
(u(x)-u(y))^{p-1}(u_k(x)\phi(x)-u_k(y)\phi(y))
&=(u(x)-u(y))^{p-1}u_k(x)\phi(x)\\
&\geq(u_+(x)-u_{+}(y))^{p-1}u_k(x)\phi(x)\\
&\geq(u_+(x)-u_{+}(y))^{p-1}u_k(x)(\phi(x)-\phi(y)).
\end{split}
\end{equation}
Therefore, from \eqref{cutoffI2case1}, \eqref{cutoffI2case2} and \eqref{cutoffI2case3} we have
\begin{equation}\label{cutoffI2casexgeqy}
\mathcal{A}(u(x,y))(u_k(x)\phi(x)-u_k(y)\phi(y))
\geq(u_+(x)-u_+(y))^{p-1}u_k(x)(\phi(x)-\phi(y)).
\end{equation}
When $u(x)=u(y)$, the estimate \eqref{cutoffI2casexgeqy} hods true. In case of $u(x)<u(y)$, by interchanging the roles of $x$ and $y$ in the above estimates, we arrive at
\begin{equation}\label{cutoffI2casexleqy}
\mathcal{A}(u(x,y))(u_k(x)\phi(x)-u_k(y)\phi(y))
\geq(u_+(y)-u_+(x))^{p-1}u_k(y)(\phi(y)-\phi(x)).
\end{equation}
Combining the estimates \eqref{cutoffI1}, \eqref{cutoffI2casexgeqy} and \eqref{cutoffI2casexleqy} in \eqref{cutofftest} and letting $k\to\infty$, along with an application of the Lebesgue dominated convergence theorem, we obtain
\begin{equation}\label{cutoffest}
\int_{\Omega'}|\nabla u_+|^{p-2}\nabla u_+\cdot\nabla\phi\,dx+\int_{\mathbb{R}^n}\int_{\mathbb{R}^n}\mathcal{A}(u_+(x,y))(\phi(x)-\phi(y))\,d\mu\leq 0.
\end{equation}
By a density argument  \eqref{cutoffest} holds for every $\phi\in W_0^{1,p}(\Omega')$.
This shows that $u_+$ is a weak subsolution of \eqref{maineqn}.
\end{proof}

\section{Energy estimates}

The following energy estimate will be crucial for us. 
We denote an open ball with center $x_0\in\R^n$ and radius $r>0$ by $B_r(x_0)$.

\begin{Lemma}\label{energyest}
Let $u$ be a weak subsolution of \eqref{maineqn} and denote $w=(u-k)_{+}$ with $k\in\mathbb{R}$. 
There exists a positive constant $C=C(p,\Lambda)$ such that 
\begin{equation}\label{energyesteqn}
\begin{split}
&\int_{B_r(x_0)}\psi^p|\nabla w|^p\,dx+\int_{B_r(x_0)}\int_{B_r(x_0)}|w(x)\psi(x)-w(y)\psi(y)|^p \,d\mu\\
&\leq C\bigg(\int_{B_r(x_0)}w^p |\nabla\psi|^p\,dx+\int_{B_r(x_0)}\int_{B_r(x_0)}{\max\{w(x),w(y)\}^p|\psi(x)-\psi(y)|^p}\,d\mu\\
&\qquad+\esssup_{x\in\supp\psi}\int_{{\mathbb{R}^n\setminus B_r(x_0)}}{\frac{w(y)^{p-1}}{|x-y|^{n+ps}}}\,dy
\cdot\int_{B_r(x_0)}w\psi^p\,dx\bigg),
\end{split}
\end{equation}
whenever $B_r(x_0)\subset\Omega$ and $\psi\in C_c^{\infty}(B_r{(x_0)})$ is a nonnegative function. 
If $u$ is a weak supersolution of \eqref{maineqn}, the estimate in \eqref{energyesteqn} holds with $w=(u-k)_{-}$.
\end{Lemma}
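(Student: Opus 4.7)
The plan is to test \eqref{weaksubsupsoln} against the nonnegative function $\phi=w\psi^p$, which belongs to $W_0^{1,p}(\Omega')$ for any $\Omega'$ with $\supp\psi\subset\Omega'\Subset\Omega$, since $w\in W_{\loc}^{1,p}(\Omega)$ and $\psi$ is compactly supported in $B_r(x_0)$. The resulting inequality decomposes as $I_1+I_2\le 0$, with $I_1$ the local and $I_2$ the nonlocal piece. For $I_1$, the product rule gives $\nabla(w\psi^p)=\psi^p\nabla w+p\,w\psi^{p-1}\nabla\psi$, and using $|\nabla u|^{p-2}\nabla u\cdot\nabla w=|\nabla w|^p$ almost everywhere (which holds because $\nabla u=\nabla w$ on $\{u>k\}$ while $w$ and $\nabla w$ vanish on $\{u\le k\}$) together with Young's inequality on the cross term yields
\[
I_1\ge\tfrac{1}{C}\int_{B_r(x_0)}\psi^p|\nabla w|^p\,dx-C\int_{B_r(x_0)}w^p|\nabla\psi|^p\,dx.
\]

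For $I_2$, I split $\R^n\times\R^n$ into the near block $B_r(x_0)\times B_r(x_0)$ and the two off-diagonal strips, which contribute equally by symmetry of the kernel and of the integrand. On the near block the crucial ingredient is the pointwise algebraic inequality
\begin{equation*}
\mc{A}(u(x,y))(w(x)\psi(x)^p-w(y)\psi(y)^p)\ge\tfrac{1}{C}|w(x)\psi(x)-w(y)\psi(y)|^p-C\max\{w(x),w(y)\}^p|\psi(x)-\psi(y)|^p,
\end{equation*}
with $C=C(p)$. It is verified by a case analysis according to the position of $u(x),u(y)$ with respect to $k$: when both lie at or below $k$ both sides are zero; when both exceed $k$ we have $u(x)-u(y)=w(x)-w(y)$ and the inequality reduces to the standard pointwise Caccioppoli-type bound for the $p$-Laplacian applied to $(w(x),w(y))$ with weights $(\psi(x),\psi(y))$; in the mixed case only one summand on the left is nonzero and, since $|u(x)-u(y)|\ge w(x)$ or $w(y)$ accordingly, the inequality follows by simply dropping terms of favorable sign.

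On each off-diagonal strip, say $B_r(x_0)\times(\R^n\setminus B_r(x_0))$, the factor $\psi(y)^p$ vanishes and the integrand reduces to $\mc{A}(u(x,y))w(x)\psi(x)^p$. A short sign check shows that the only unfavorable configuration is $u(y)\ge u(x)>k$, and in that regime $|u(x)-u(y)|=u(y)-u(x)\le u(y)-k=w(y)$, so $|u(x)-u(y)|^{p-1}\le w(y)^{p-1}$. Using the upper bound $K(x,y)\le\La|x-y|^{-(n+ps)}$ from \eqref{kernel}, the strip contributes at least
\[
-C\La\Bigl(\esssup_{x\in\supp\psi}\int_{\R^n\setminus B_r(x_0)}\frac{w(y)^{p-1}}{|x-y|^{n+ps}}\,dy\Bigr)\int_{B_r(x_0)}w(x)\psi(x)^p\,dx,
\]
which, after doubling for the twin strip, reproduces exactly the tail term in \eqref{energyesteqn}. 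Collecting the estimates and rearranging yields \eqref{energyesteqn} in the subsolution case; the supersolution statement follows by applying the subsolution result to $-u$ and noting that $((-u)-(-k))_+=(u-k)_-$. The main obstacle is the pointwise inequality on the near block in the mixed case, where one must extract full $p$-th power coercivity on the left without incurring any remainder beyond $\max\{w(x),w(y)\}^p|\psi(x)-\psi(y)|^p$ on the right.
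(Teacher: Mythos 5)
Your proposal is correct and takes essentially the same approach as the paper: both test \eqref{weaksubsupsoln} against $\phi=w\psi^p$, bound the gradient term via the product rule and Young's inequality, and estimate the nonlocal term by splitting into $B_r(x_0)\times B_r(x_0)$ plus two off-diagonal strips, using the standard pointwise Caccioppoli-type inequality on the block and a sign/kernel analysis on the strips. The paper simply abbreviates these computations by citing \cite[Proposition 3.1]{Verenacontinuity} for the local part and \cite[Theorem 1.4]{Kuusilocal} for the nonlocal part, whereas you unpack the case analysis explicitly; your casework (both above $k$, both below, mixed) and the observation that the unfavorable off-diagonal configuration has $|u(x)-u(y)|\le w(y)$ are exactly the content of those cited arguments.
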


\begin{proof}
Let $u$ be a weak subsolution of \eqref{maineqn}. 
For $w=(u-k)_+$, by choosing $\phi=w\psi^p$ as a test function in \eqref{weaksubsupsoln}, we obtain
\begin{equation}\label{energytest}
\begin{split}
0&\geq\int_{B_r(x_0)}|\nabla u|^{p-2}\nabla u\cdot\nabla(w\psi^p)\,dx+\int_{\mathbb{R}^n}\int_{\mathbb{R}^n}\mathcal{A}(u(x,y))(w(x)\psi(x)^p-w(y)\psi(y)^p)\,d\mu\\
&=I+J.
\end{split}
\end{equation}
Proceeding as in the proof of \cite[Page 14, Proposition 3.1]{Verenacontinuity}, for some constants $c=c(p)>0$ and $C=C(p)>0$, we have
\begin{equation}\label{energyIest}
\begin{split}
I&=\int_{B_r(x_0)}|\nabla u|^{p-2}\nabla u\cdot\nabla(w\psi^p)\,dx\\
&\geq c\int_{B_r(x_0)}\psi^p|\nabla w|^p\,dx-C\int_{B_r(x_0)}w^p|\nabla\psi|^p\,dx.
\end{split}
\end{equation}
Moreover, from the lines of the proof of \cite[Pages 1285--1287, Theorem 1.4]{Kuusilocal}, for some constants $c=c(p,\Lambda)>0$ and $C=C(p,\Lambda)>0$, we have
\begin{equation}\label{energyJest}
\begin{split}
J&=\int_{\mathbb{R}^n}\int_{\mathbb{R}^n}\mathcal{A}(u(x,y))(w(x)\psi(x)^p-w(y)\psi(y)^p)\,d\mu\\
&\geq c\int_{B_r(x_0)}\int_{B_r(x_0)}|w(x)\psi(x)-w(y)\psi(y)|^p\,d\mu\\
&\qquad-C\int_{B_r(x_0)}\int_{B_r(x_0)}\max\{w(x),w(y)\}^p|\psi(x)-\psi(y)|^p\,d\mu\\
&\qquad-C\esssup_{x\in\supp\psi}\int_{{\mathbb{R}^n\setminus B_r(x_0)}}{\frac{w(y)^{p-1}}{|x-y|^{n+ps}}}\,dy
\cdot\int_{B_r(x_0)}w\psi^p\,dx.
\end{split}
\end{equation}
By applying \eqref{energyIest} and \eqref{energyJest} in \eqref{energytest}, we obtain \eqref{energyesteqn}. 
In the case of a weak supersolution, the estimate in \eqref{energyesteqn} follows by applying the obtained result to $-u$ .
\end{proof}

Next we define a tail which appears in estimates throughout the article.

\begin{Definition}\label{def.tail}
Let $u$ be a weak subsolution or a weak supersolution of \eqref{maineqn} as in Definition \ref{subsupsolution}.
The tail of $u$ with respect to a ball $B_r(x_0)$ is defined by
\begin{equation}\label{loctail}
\Tail(u;x_0,r)=\bigg(r^{p}\int_{\mathbb{R}^n\setminus B_r(x_0)}\frac{|u(y)|^{p-1}}{|y-x_0|^{n+ps}}\,dy\bigg)^\frac{1}{p-1}.
\end{equation}
\end{Definition}

We prove an energy estimate which will be crucial to obtain a reverse H\"older inequality for weak supersolutions of \eqref{maineqn}.

\begin{Lemma}\label{energyforrev}
Let $q\in(1,p)$ and $d>0$.
Assume that $u$ is a weak supersolution of \eqref{maineqn} such that $u\geq 0$ in $B_R(x_0)\subset\Omega$ and denote by $w=(u+d)^\frac{p-q}{p}$. 
There exists a positive constant $c=c(p,\Lambda)$ such that
\begin{equation}\label{energyrevest1}
\begin{split}
&\int_{B_r(x_0)}\psi^p|\nabla w|^p\,dx
\leq c\bigg(\frac{(p-q)^p}{(q-1)^\frac{p}{p-1}}\int_{B_r(x_0)}w^p|\nabla\psi|^p\,dx\\
&+\frac{(p-q)^p}{(q-1)^{p}}\int_{B_r(x_0)}\int_{B_r(x_0)}\max\{w(x),w(y)\}^p|\psi(x)-\psi(y)|^p\,d\mu\\
&+\frac{(p-q)^p}{(q-1)}\bigg(\esssup_{z\in\supp\psi}\int_{\mathbb{R}^n\setminus B_r(x_0)}K(z,y)\,dy
+d^{1-p}R^{-p}\Tail(u_{-};x_0,R)^{p-1}\bigg)\int_{B_r(x_0)}w^p\psi^p\,dx\bigg),
\end{split}
\end{equation}
whenever $B_r(x_0)\subset B_{\frac{3R}{4}}(x_0)$ and $\psi\in C_c^{\infty}(B_r(x_0))$ is a nonnegative function.
Here $\Tail(\cdot)$ is defined in \eqref{loctail}.
\end{Lemma}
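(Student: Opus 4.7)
The plan is to insert the nonnegative test function $\phi=(u+d)^{1-q}\psi^p$ into the weak supersolution inequality. Since $\supp\psi\subset B_r(x_0)\subset B_{3R/4}(x_0)\subset B_R(x_0)$ and $u\ge 0$ on $B_R(x_0)$, one has $u+d\ge d>0$ on $\supp\psi$, so $\phi$ is bounded with $\nabla\phi\in L^p$ and, after a standard approximation, is admissible. Writing the resulting inequality as $I+J\ge 0$ with $I$ the local integral and $J$ the nonlocal double integral, I would split $I=I_1+I_2$ (principal plus cross term) and $J=J_1+2J_2$ (interior plus symmetric tail pieces over $B_r(x_0)\times(\R^n\setminus B_r(x_0))$), and then move the principal $|\nabla w|^p$-contribution to the left while bounding the rest.

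For the local part, using $\nabla w=\frac{p-q}{p}(u+d)^{-q/p}\nabla u$ gives
\[
I_1=(1-q)\int_{B_r(x_0)}(u+d)^{-q}\psi^p|\nabla u|^p\,dx=-(q-1)\Bigl(\tfrac{p}{p-q}\Bigr)^p\int_{B_r(x_0)}\psi^p|\nabla w|^p\,dx,
\]
while the cross term $I_2=p\int(u+d)^{1-q}\psi^{p-1}|\nabla u|^{p-2}\nabla u\cdot\nabla\psi\,dx$ is rewritten as a multiple of $\int w|\nabla w|^{p-1}\psi^{p-1}|\nabla\psi|\,dx$ via $(u+d)^{1-q}|\nabla u|^{p-1}=(p/(p-q))^{p-1}w|\nabla w|^{p-1}$ and absorbed into $-I_1$ by Young's inequality with weight proportional to $q-1$; this rebalancing is exactly what produces the $(p-q)^p/(q-1)^{p/(p-1)}\int w^p|\nabla\psi|^p\,dx$ contribution to \eqref{energyrevest1}.

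The nonlocal part is the main obstacle. On the interior piece $J_1$ over $B_r(x_0)\times B_r(x_0)$, I would apply a Caccioppoli-type pointwise algebraic inequality, in the spirit of \cite[proof of Theorem 1.4]{Kuusilocal} and \cite{Brascolind}, of the form
\[
|a-b|^{p-2}(a-b)\bigl[(a+d)^{1-q}\alpha^p-(b+d)^{1-q}\beta^p\bigr]\le -c(q-1)\bigl|W(a)\alpha-W(b)\beta\bigr|^p+C\tfrac{(p-q)^p}{(q-1)^{p-1}}\max\{W(a),W(b)\}^p|\alpha-\beta|^p
\]
for $a,b\ge 0$ and $\alpha,\beta\ge 0$, where $W(t)=(t+d)^{(p-q)/p}$. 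The first term on the right combines with $I_1$ (it has the favorable sign and may be discarded after moving to the left), while the second term yields the $(p-q)^p/(q-1)^p$ contribution in \eqref{energyrevest1}. For the tail piece $J_2$, $\phi(y)=0$ for $y\notin B_r(x_0)$ so the integrand is $\mathcal{A}(u(x,y))(u(x)+d)^{1-q}\psi(x)^p K(x,y)$; splitting $|u(x)-u(y)|^{p-1}\le C\bigl[(u(x)+d)^{p-1}+u_-(y)^{p-1}+d^{p-1}\bigr]$ and using $(u(x)+d)^{1-q}\le d^{1-q}$ on $\supp\psi$, the $(u(x)+d)^{p-1}$ and $d^{p-1}$ pieces assemble into the $\esssup_{z\in\supp\psi}\int_{\R^n\setminus B_r(x_0)}K(z,y)\,dy$ factor, while $u_-(y)^{p-1}$, which is nonzero only for $y\in\R^n\setminus B_R(x_0)$ by the positivity assumption, together with the bound $|y-x_0|^{-n-ps}\lesssim R^{-n-ps}$ valid for $x\in\supp\psi$ and $|y-x_0|\ge R$, gives the $d^{1-p}R^{-p}\Tail(u_-;x_0,R)^{p-1}$ contribution.

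Assembling all of the above and dividing both sides by $(q-1)(p/(p-q))^p$ yields \eqref{energyrevest1}. The crux of the argument is the interior pointwise algebraic inequality with its sharp $(q-1)$ and $(p-q)$ dependence; this is typically proved by case analysis $a\ge b$ versus $a<b$, a mean-value argument on the decreasing map $t\mapsto t^{1-q}$ (which produces the linear factor $q-1$ via its derivative near $a=b$), and a weighted Young's inequality to rebalance the $(p-q)$-powers introduced by the definition of $W$. Once this is in hand, the extraction of the tail term is essentially a computation, exploiting only the positivity of $u$ on $B_R(x_0)$ and the kernel decay.
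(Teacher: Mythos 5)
Your proposal follows the paper's proof essentially verbatim: same test function $\phi=(u+d)^{1-q}\psi^p$, same decomposition into a local part, an interior nonlocal double integral over $B_r\times B_r$, and a tail piece, with Young's inequality rebalancing the local cross-term and a DCKP-style pointwise algebraic inequality handling the interior double integral (the paper simply defers both nonlocal estimates to \cite{KuusiHarnack}, Lemma 5.1, rather than spelling them out). Two cosmetic bookkeeping slips only: the coefficient in your pointwise inequality should read $C/(q-1)^{p-1}$ without the extra $(p-q)^p$ factor (that power of $p-q$ comes from the final division by $(q-1)(p/(p-q))^p$, not from the interior estimate), and in the tail step the operative bound for $x\in\supp\psi\subset B_{3R/4}(x_0)$ and $|y-x_0|\geq R$ is $|x-y|^{-n-ps}\lesssim|y-x_0|^{-n-ps}$ (so that $|y-x_0|$ survives to form the tail integral), not $|y-x_0|^{-n-ps}\lesssim R^{-n-ps}$.
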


\begin{proof}
Let $d>0$, $v=u+d$ and $q\in[1+\epsilon,p-\epsilon]$ for $\epsilon>0$ small enough. 
Then $v$ is a weak supersolution of \eqref{maineqn}. 
By choosing $\phi=v^{1-q}\psi^p$ as a test function in \eqref{weaksubsupsoln}, we obtain
\begin{equation}\label{revestfinal}
\begin{split}
0&\leq \int_{B_r(x_0)}|\nabla v|^{p-2}\nabla v\cdot\nabla( v^{1-q}\psi^p)\,dx\\
&\qquad+\int_{B_r(x_0)}\int_{B_r(x_0)}\mathcal{A}( v(x,y))( v(x)^{1-q}\psi(x)^p- v(y)^{1-q}\psi(y)^p)\,d\mu\\
&\qquad+2\int_{\mathbb{R}^n\setminus B_r(x_0)}\int_{B_r(x_0)}\mathcal{A}( v(x,y)) v(x)^{1-q}\psi(x)^p\,d\mu\\
&=I_1+I_2+2I_3.
\end{split}
\end{equation}
\textbf{Estimate of $I_1$:} We observe that
\begin{equation}\label{revI1est}
\begin{split}
I_1&=\int_{B_r(x_0)}|\nabla v|^{p-2}\nabla v\cdot\nabla( v^{1-q}\psi^p)\,dx\\
&\leq(1-q)\int_{B_r(x_0)} v^{-q}|\nabla v|^{p}\psi^p\,dx+p\int_{B_r(x_0)} v^{1-q}|\nabla\psi||\nabla  v|^{p-1}\psi^{p-1}\,dx\\
&=(1-q)J_1+J_2,
\end{split}
\end{equation}
where
$$
J_1=\int_{B_r(x_0)} v^{-q}|\nabla v|^{p}\psi^p\,dx
$$
and
$$
J_2=p\int_{B_r(x_0)} v^{1-q}|\nabla\psi||\nabla  v|^{p-1}\psi^{p-1}\,dx.
$$
\textbf{Estimate of $J_2$:} By Young's inequality, we obtain
\begin{equation}\label{revJ2est}
J_2=p\int_{B_r(x_0)} v^{1-q}|\nabla\psi||\nabla  v|^{p-1}\psi^{p-1}\,dx\leq\frac{q-1}{2}J_1+\frac{c(p)}{(q-1)^\frac{1}{p-1}}\int_{B_r(x_0)}|\nabla\psi|^p v^{p-q}\,dx.
\end{equation}
By applying \eqref{revJ2est} in \eqref{revI1est}, for some constant $c=c(p)>0$, we have
\begin{equation}\label{revI1estfinal}
\begin{split}
I_1&\leq\frac{1-q}{2}\int_{B_r(x_0)} v^{-q}|\nabla v|^{p}\psi^p\,dx+\frac{c}{(q-1)^\frac{1}{p-1}}\int_{B_r(x_0)}|\nabla\psi|^p v^{p-q}\,dx\\
&=-\frac{q-1}{2}\Big(\frac{p}{p-q}\Big)^p\int_{B_r(x_0)}\big|\nabla( v^\frac{p-q}{p})\big|^p\psi^p\,dx
+\frac{c}{(q-1)^\frac{1}{p-1}}\int_{B_r(x_0)}|\nabla\psi|^p v^{p-q}\,dx.
\end{split}
\end{equation}
\textbf{Estimate of $I_2$:} Following the lines of the proof of \cite[Pages 1830--1833, Lemma 5.1]{KuusiHarnack} for $w= v^\frac{p-q}{p}$, with some positive constants $c(p,q)$ and $c(p)$, we obtain
\begin{equation}\label{revI2est}
\begin{split}
I_2&=\int_{B_r(x_0)}\int_{B_r(x_0)}\mathcal{A}( v(x,y))( v(x)^{1-q}\psi(x)^p- v(y)^{1-q}\psi(y)^p)\,d\mu\\
&\leq -c(p,q)\int_{B_r(x_0)}\int_{B_r(x_0)}|w(x)-w(y)|^p\,\psi(y)^{p}\,d\mu\\
&\qquad+\frac{c(p)}{(q-1)^{p-1}}\int_{B_r(x_0)}\int_{B_r(x_0)}\max\{w(x),w(y)\}^p|\psi(x)-\psi(y)|^p\,d\mu.
\end{split}
\end{equation}
\\
\textbf{Estimate of $I_3$:} Following the lines of the proof of \cite[Page 1830, Lemma 5.1]{KuusiHarnack} for $w= v^\frac{p-q}{p}$, we obtain
\begin{equation}\label{revI34est}
\begin{split}
I_3&=2\int_{\mathbb{R}^n\setminus B_r(x_0)}\int_{B_r(x_0)}\mathcal{A}( v(x,y)) v(x)^{1-q}\psi(x)^p\,d\mu\\
&\leq c\bigg(\esssup_{z\in\supp\psi}\int_{\mathbb{R}^n\setminus B_r(x_0)}K(z,y)\,dy\\
&\qquad\qquad+d^{1-p}\int_{\mathbb{R}^n\setminus B_R(x_0)}(u(y))_{-}^{p-1}|y-x_0|^{-n-ps}\,dy\bigg)
\int_{B_r(x_0)}w^p\psi^p\,dx,
\end{split}
\end{equation}
with $c=c(p,\Lambda)>0$. By applying \eqref{revI1estfinal}, \eqref{revI2est} and \eqref{revI34est} in \eqref{revestfinal}, we obtain \eqref{energyrevest1}.
\end{proof}

Next, we obtain a logarithmic energy estimate.

\begin{Lemma}\label{loglemma}
Assume that $u$ is a weak supersolution of \eqref{maineqn} such that $u\geq 0$ in $B_R(x_0)\subset\Omega$. 
There exists a positive constant $c=c(n,p,s,\Lambda)$ such that
\begin{equation}\label{logest}
\begin{split}
&\int_{B_r(x_0)}|\nabla\log(u+d)|^p\,dx+\int_{B_r(x_0)}\int_{B_r(x_0)}\bigg|\log\bigg(\frac{u(x)+d}{u(y)+d}\bigg)\bigg|^p\,d\mu\\
&\qquad\leq cr^n\big(r^{-p}+r^{-ps}+d^{1-p}R^{-p}\Tail(u_-;x_0,R)^{p-1}\big),
\end{split}
\end{equation}
whenever $B_r(x_0)\subset B_\frac{R}{2}(x_0)$ and $d>0$.
Here $\Tail(\cdot)$ is given by \eqref{loctail}.
\end{Lemma}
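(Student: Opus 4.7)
The plan is to use $v = u+d$, which is a weak supersolution of \eqref{maineqn} by the invariance of the equation under adding constants and satisfies $v \geq d > 0$ on $B_R(x_0)$. I would test the supersolution inequality with $\phi = v^{1-p}\psi^p$, where $\psi \in C_c^\infty$ is a standard cutoff equal to $1$ on $B_r(x_0)$ and supported in a slightly larger concentric ball still contained in $B_{R/2}(x_0)$, with $|\nabla \psi| \leq c/r$. Such $\phi$ is admissible since $v$ is bounded away from zero on $\supp(\psi)$. Mirroring the local computation in the proof of Lemma \ref{energyforrev}, the local term produces
$-(p-1)\int v^{-p}|\nabla u|^p\psi^p\,dx + p\int v^{1-p}|\nabla u|^{p-2}\nabla u\cdot\nabla\psi\,\psi^{p-1}\,dx$; recognising $v^{-p}|\nabla u|^p = |\nabla\log v|^p$ and absorbing the cross term via Young's inequality leaves the positive quantity $c(p)\int|\nabla\log v|^p\psi^p\,dx$ on the left, at the cost of an error $C\int|\nabla\psi|^p\,dx \leq Cr^{n-p}$.

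For the nonlocal contribution I split $\mathbb{R}^n\times\mathbb{R}^n$ into the near-diagonal region $\supp(\psi)\times\supp(\psi)$ and the complementary long-range pieces. On the near-diagonal region I would apply the algebraic inequality of Di Castro--Kuusi--Palatucci, already invoked in the proof of Lemma \ref{energyforrev}, with $a=v(x)$, $b=v(y)$, $\tau_1=\psi(x)$, $\tau_2=\psi(y)$; this produces the desired $c\int\int\bigl|\log(v(x)/v(y))\bigr|^p\,d\mu$, which I transfer to the left-hand side, at the cost of an error $\int\int|\psi(x)-\psi(y)|^p\,d\mu \leq Cr^{n-ps}$ bounded via $|\psi(x)-\psi(y)|^p \leq \min\{1,\,C|x-y|^p r^{-p}\}$ and a standard double-integral computation.

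For the long-range part with $y \in B_R \setminus \supp(\psi)$, the positivity $u(y) \geq 0$ together with $u(x) \leq v(x)$ yields $(u(x)-u(y))_+^{p-1} v(x)^{1-p} \leq 1$, while the portion with $u(y) > u(x)$ carries the favourable sign in the supersolution inequality and may be discarded; integrating against $K(x,y)\,dx\,dy$ with $|x-y|$ of order $r$ or larger gives a further $Cr^{n-ps}$ error. For $y \notin B_R$, where $u(y)$ may be negative, the elementary bound $u(x)-u(y) \leq v(x)+u_-(y)$ yields $|u(x)-u(y)|^{p-1}v(x)^{1-p} \leq C_p\bigl(1+d^{1-p}u_-(y)^{p-1}\bigr)$; combined with $|x-y| \geq |y-x_0|/2$ (which holds since $r \leq R/2$) this contributes exactly the tail term $C\,r^n\,d^{1-p}R^{-p}\Tail(u_-;x_0,R)^{p-1}$, with the constant piece absorbed into the $r^{n-ps}$ bound. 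Collecting all contributions delivers \eqref{logest}.

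I expect the main difficulty to lie in careful sign bookkeeping in the long-range nonlocal terms, where one must identify which combinations of $u(x),u(y)$ contribute with a favourable sign (and can be dropped) versus those which must be controlled against the tail of $u_-$. The interior algebraic inequality is the deepest technical ingredient but, being identical to the one used in Lemma \ref{energyforrev}, can be quoted from Di Castro--Kuusi--Palatucci rather than reproved here.
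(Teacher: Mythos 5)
Your proposal is correct and follows essentially the same route as the paper's proof: test with $\phi=(u+d)^{1-p}\psi^p$, handle the local term as in Kinnunen--Kuusi via Young's inequality to extract $\int|\nabla\log(u+d)|^p\psi^p\,dx$ with error $cr^{n-p}$, use the Di Castro--Kuusi--Palatucci logarithmic inequality on the near-diagonal nonlocal part, and split the long-range part at $B_R$ so that the exterior piece yields $c\,r^n d^{1-p}R^{-p}\Tail(u_-;x_0,R)^{p-1}$. The one adjustment you should make is to place the near/far splitting at a slightly dilated ball (the paper uses $B_{2r}\times B_{2r}$ with $\supp\psi\subset B_{\frac{3r}{2}}(x_0)$) rather than exactly at $\supp\psi\times\supp\psi$: with your split the assertion that $|x-y|$ is of order $r$ in the cross term fails for $x$ near $\partial\,\supp\psi$, and one would instead have to invoke the decay $\psi(x)\lesssim \operatorname{dist}(x,\partial\,\supp\psi)/r$ to recover the $cr^{n-ps}$ bound, a complication the paper's choice avoids.
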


\begin{proof}
Let $\psi\in C_c^{\infty}(B_\frac{3r}{2}(x_0))$ be such that $0\leq\psi\leq 1$ in $B_\frac{3r}{2}(x_0)$, $\psi= 1$ in $B_r(x_0)$,
and $|\nabla\psi|\leq\frac{8}{r}$ in $B_\frac{3r}{2}(x_0)$.
By choosing $\phi=(u+d)^{1-p}\psi^p$ as a test function in \eqref{weaksubsupsoln}, we obtain
\begin{equation}\label{logesttest}
\begin{split}
0&\leq\int_{B_{2r}(x_0)}\int_{B_{2r}(x_0)}\mathcal{A}(u(x,y))((u(x)+d)^{1-p}\psi(x)^p-(u(y)+d)^{1-p}\psi(y)^p)\,d\mu\\
&\qquad+2\int_{\mathbb{R}^n\setminus {B_{2r}}(x_0)}\int_{B_{2r}(x_0)}\mathcal{A}(u(x,y))(u(x)+d)^{1-p}\psi(x)^p\,d\mu\\
&\qquad+\int_{B_{2r}(x_0)}|\nabla u|^{p-2}\nabla u\cdot\nabla((u+d)^{1-p}\psi^p)\,dx\\
&=I_1+I_2+I_3.
\end{split}
\end{equation}
\textbf{Estimate of $I_1$:} Following the lines of the proof of \cite[Pages 1288--1289, Lemma 1.3]{Kuusilocal} and using the properties of $\psi$, for some positive constant $c=c(n,p,s,\Lambda)$, we obtain
\begin{equation}\label{logI1}
\begin{split}
I_1&=\int_{B_{2r}(x_0)}\int_{B_{2r}(x_0)}\mathcal{A}(u(x,y))((u(x)+d)^{1-p}\psi(x)^p-(u(y)+d)^{1-p}\psi(y)^p)\,d\mu\\
&\leq-\frac{1}{c}\int_{B_{2r}(x_0)}\int_{B_{2r}(x_0)}K(x,y)\bigg|\log\bigg(\frac{u(x)+d}{u(y)+d}\bigg)\bigg|^p\psi(y)^p\,dx\, dy+cr^{n-ps}.
\end{split}
\end{equation}
\textbf{Estimate of $I_2$:} Following the lines of the proof of \cite[Page 1290, Lemma 1.3]{Kuusilocal} and using the properties of $\psi$, for some positive constant $c=c(n,p,s,\Lambda)$, we obtain
\begin{equation}\label{logI2}
\begin{split}
I_2&=2\int_{\mathbb{R}^n\setminus {B_{2r}}(x_0)}\int_{B_{2r}(x_0)}\mathcal{A}(u(x,y))(u(x)+d)^{1-p}\psi(x)^p\,d\mu\\
&\leq c d^{1-p}r^n R^{-p}\Tail(u_-;x_0,R)^{p-1}+c r^{n-ps}.
\end{split}
\end{equation}
\textbf{Estimate of $I_3$:} 
Arguing similarly as in the proof of \cite[Pages 717-718, Lemma 3.4]{Kin-Kuusi} and using the properties of $\psi$, for some positive constant $c=c(p)$, we have
\begin{equation}\label{logI3}
I_3=\int_{B_{2r}(x_0)}|\nabla u|^{p-2}\nabla u\cdot\nabla((u+d)^{1-p}\psi^p)\,dx\leq-c\int_{B_r(x_0)}|\nabla\log(u+d)|^p\,dx+c r^{n-p}.
\end{equation}
Hence using \eqref{logI1}, \eqref{logI2} and \eqref{logI3} in \eqref{logesttest} along with the fact that $\psi\equiv 1$ in $B_r(x_0)$, the estimate \eqref{logest} follows.
\end{proof}

As a consequence of Lemma \ref{loglemma}, we have the following result.

\begin{Corollary}\label{cor}
Assume that $u$ is a weak solution of \eqref{maineqn} such that $u\geq 0$ in $B_R(x_0)\subset\Omega$. Let $a,d>0$, $b>1$ and denote
$$
v=\min\bigg\{\bigg(\log\bigg(\frac{a+d}{u+d}\bigg)\bigg)_+,\log b\bigg\}.
$$
There exists a positive constant $c=c(n,p,s,\Lambda)$ such that
\begin{equation}\label{corest}
\fint_{B_r(x_0)}|v-(v)_{B_r(x_0)}|^p\,dx\leq c\Big(1+d^{1-p}\Big(\frac{r}{R}\Big)^p\Tail(u_-;x_0,R)^{p-1}\Big),
\end{equation}
whenever $B_r(x_0)\subset B_\frac{R}{2}(x_0)$ with $r\in(0,1]$.
Here $(v)_{B_r(x_0)}=\fint_{B_r(x_0)}v\,dx$ and $\Tail(\cdot)$ is given by \eqref{loctail}.
\end{Corollary}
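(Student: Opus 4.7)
The plan is to reduce the oscillation estimate for $v$ to the logarithmic energy estimate from Lemma \ref{loglemma} via the Poincar\'e--Wirtinger inequality and the chain rule.

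First, I would observe that $v$ is obtained from $\log(u+d)$ by two successive Lipschitz truncations. Writing $f(x) = \log\bigl((a+d)/(u(x)+d)\bigr) = \log(a+d) - \log(u(x)+d)$, we have $v = \min\{f_+, \log b\}$, and the standard chain rule for Sobolev functions gives
\[
|\nabla v(x)| \leq |\nabla f(x)| = |\nabla \log(u(x)+d)| \quad \text{a.e. in } B_r(x_0).
\]
Since $v$ is bounded by $\log b$ and its gradient lies in $L^p(B_r(x_0))$ (by Lemma \ref{loglemma}), we have $v \in W^{1,p}(B_r(x_0))$, so the Poincar\'e--Wirtinger inequality applies:
\[
\fint_{B_r(x_0)} |v - (v)_{B_r(x_0)}|^p\, dx \leq c(n,p)\, r^p \fint_{B_r(x_0)} |\nabla v|^p\, dx.
\]

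Next I would insert the pointwise gradient bound and apply Lemma \ref{loglemma} (dropping the nonnegative nonlocal term on the left-hand side):
\[
r^p \fint_{B_r(x_0)} |\nabla v|^p\, dx \leq \frac{r^p}{|B_r(x_0)|}\int_{B_r(x_0)} |\nabla \log(u+d)|^p\, dx \leq c\bigl(1 + r^{p(1-s)} + d^{1-p}(r/R)^p \Tail(u_-;x_0,R)^{p-1}\bigr),
\]
where the constant absorbs $|B_r(x_0)|^{-1} r^n$.

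Finally, since $0 < s < 1$ and $r \in (0,1]$, the middle term satisfies $r^{p(1-s)} \leq 1$ and is absorbed into the constant. Combining everything yields \eqref{corest}.

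The only mildly delicate point is justifying that the truncation $v = \min\{f_+,\log b\}$ preserves the pointwise gradient bound almost everywhere; this is standard for the composition of Sobolev functions with globally Lipschitz scalar maps, so I do not expect this to present a genuine obstacle. The nonlocal tail contribution in \eqref{corest} comes entirely from the corresponding term in Lemma \ref{loglemma}; no further nonlocal analysis is required here, which is why the proof is essentially a one-line consequence of the logarithmic energy estimate together with Poincar\'e--Wirtinger.
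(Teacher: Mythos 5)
Your proof is correct and follows essentially the same route as the paper: apply the Poincar\'e--Wirtinger inequality, use the truncation property to bound $|\nabla v|$ by $|\nabla\log(u+d)|$ pointwise a.e., invoke Lemma \ref{loglemma}, and absorb the $r^{p(1-s)}$ term using $r\in(0,1]$. The only cosmetic difference is that you spell out the Lipschitz-composition justification for the truncation gradient bound, which the paper leaves implicit.
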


\begin{proof}
By the Poincar\'e inequality {from \cite[Theorem 2]{Evans}}, for a constant $c=c(n,p)>0$, we have
\begin{equation}\label{Sobapp}
\fint_{B_r(x_0)}|v-(v)_{B_r(x_0)}|^p\,dx\leq c r^{p-n}\int_{B_r(x_0)}|\nabla v|^p\,dx.
\end{equation}
Now since $v$ is a truncation of the sum of a constant and $\log(u+d)$, we have
\begin{equation}\label{trunprp}
\int_{B_r(x_0)}|\nabla v|^p\,dx\leq \int_{B_r(x_0)}|\nabla\log(u+d)|^p\,dx.
\end{equation}
The estimate in \eqref{corest} follows by employing \eqref{logest} in \eqref{trunprp} along with \eqref{Sobapp} and the fact that $r\in(0,1]$.
\end{proof}

\section{Local boundedness}
We apply the following real analysis lemma. For the proof of Lemma \ref{iteration}, see \cite[Lemma 4.1]{Dibe}.
\begin{Lemma}\label{iteration}
Let $(Y_j)_{j=0}^{\infty}$ be a sequence of positive real numbers such that
$Y_0\leq c_{0}^{-\frac{1}{\beta}}b^{-\frac{1}{\beta^2}}$ and $Y_{j+1}\leq c_0 b^{j} Y_j^{1+\beta}$,
$j=0,1,2,\dots$, for some constants $c_0,b>1$ and $\beta>0$. Then $\lim_{j\to\infty}\,Y_j=0$.
\end{Lemma}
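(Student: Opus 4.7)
The plan is to prove by induction on $j$ the quantitative geometric decay
\[
Y_j \leq Y_0 \, b^{-j/\beta} \quad \text{for every } j \geq 0,
\]
from which the conclusion follows at once, since $b>1$ and $\beta>0$ force $b^{-j/\beta}\to 0$ as $j\to\infty$.

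The choice of this particular rate is not arbitrary but is dictated by the threshold assumption on $Y_0$. Inserting a trial geometric bound $Y_j \leq Y_0 \lambda^j$ with $\lambda\in(0,1)$ into the recurrence $Y_{j+1}\leq c_0 b^j Y_j^{1+\beta}$ produces the requirement
\[
c_0 Y_0^{\beta}\,\bigl(b\lambda^{\beta}\bigr)^{j} \leq \lambda \quad \text{for all } j\geq 0,
\]
and the borderline (hence optimal) choice $\lambda = b^{-1/\beta}$ eliminates the $j$-dependence and reduces the constraint to $c_0 Y_0^{\beta} \leq b^{-1/\beta}$, which is precisely an algebraic rearrangement of the standing hypothesis $Y_0\leq c_0^{-1/\beta}b^{-1/\beta^2}$.

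With the ansatz fixed, the induction is routine. The base case $j=0$ is immediate. For the inductive step, assuming $Y_j\leq Y_0 b^{-j/\beta}$, the recurrence gives
\[
Y_{j+1}\leq c_0 b^{j}\bigl(Y_0 b^{-j/\beta}\bigr)^{1+\beta}= \bigl(c_0 Y_0^{\beta}\bigr)\cdot Y_0\cdot b^{\,j-j(1+\beta)/\beta},
\]
and the exponent simplifies to $-j/\beta$. Applying $c_0 Y_0^\beta\leq b^{-1/\beta}$ then yields $Y_{j+1}\leq Y_0\,b^{-(j+1)/\beta}$, closing the induction.

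There is no genuine obstacle: the argument is purely algebraic once the correct decay rate is identified, and the hypothesis on $Y_0$ is calibrated exactly to make the induction close. One could alternatively pass to the rescaled sequence $Z_j = b^{j/\beta^2} Y_j$ or take logarithms to convert the multiplicative recurrence into an affine inequality in $\log Y_j$, but the direct induction above is the shortest route.
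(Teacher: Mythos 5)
Your proof is correct, and it is essentially the classical argument: the paper does not prove this lemma itself but cites \cite[Lemma 4.1]{Dibe}, whose standard proof is precisely the induction you give, showing $Y_j\leq Y_0\,b^{-j/\beta}$ with the hypothesis on $Y_0$ calibrated so that the induction closes.
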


Our first main result shows that weak subsolutions of \eqref{maineqn} are locally bounded.
This result comes with a useful estimate.

\begin{Theorem}\label{thm1}(\textbf{Local boundedness}).
Let $u$ be a weak subsolution of \eqref{maineqn}. 
There exists a positive constant $c=c(n,p,s,\Lambda)$, such that
\begin{equation}\label{locbd}
\esssup_{B_{\frac{r}{2}}(x_0)}\,u
\leq \delta \Tail\big(u_{+};x_0,\tfrac{r}{2}\big)+c\delta^{-\frac{(p-1)\kappa}{p(\kappa-1)}}\bigg(\fint_{B_r(x_0)}u_{+}^p\,dx\bigg)^\frac{1}{p},
\end{equation}
whenever $B_r(x_0)\subset\Omega$ with $r\in(0,1]$ and $\delta\in(0,1]$. 
Here $\kappa$ and $\Tail(\cdot)$ are given by \eqref{kappa} and \eqref{loctail}, respectively.
\end{Theorem}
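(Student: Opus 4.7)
The plan is a De Giorgi iteration powered by the energy estimate of Lemma \ref{energyest}, the Sobolev inequality of Lemma \ref{c.omega_sobo}, and the geometric-decay Lemma \ref{iteration}. I fix shrinking radii $r_j=\tfrac{r}{2}(1+2^{-j})$ (so $r_0=r$ and $r_j\to r/2$), intermediate radii $\tilde r_j=\tfrac{r_j+r_{j+1}}{2}$, and rising levels $k_j=k(1-2^{-j})$ with $k>0$ a free parameter to be chosen at the end. Put $w_j=(u-k_j)_+$, $B_j=B_{r_j}(x_0)$, and take cutoffs $\psi_j\in C_c^\infty(B_{\tilde r_j})$ equal to $1$ on $B_{r_{j+1}}(x_0)$ with $|\nabla\psi_j|\le C\,2^j/r$. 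The quantity to drive to zero is $Y_j:=\bigl(\fint_{B_j}w_j^p\,dx\bigr)^{1/p}$; once $Y_j\to 0$, we have $(u-k)_+=0$ a.e.\ on $B_{r/2}(x_0)$, giving $\esssup_{B_{r/2}}u\le k$.

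For each $j$, apply Lemma \ref{energyest} on $B_j$ at level $k_j$ with cutoff $\psi_j$. Combined with $|\nabla(w_j\psi_j)|^p\le C\bigl(\psi_j^p|\nabla w_j|^p+w_j^p|\nabla\psi_j|^p\bigr)$ and controlling the nonlocal modulus term by $|\psi_j(x)-\psi_j(y)|\le\min\{C(2^j/r)|x-y|,2\}$ together with the integrability of $|z|^{p-n-ps}$ near the origin (which needs $s<1$), this yields
\begin{equation*}
\int_{B_j}|\nabla(w_j\psi_j)|^p\,dx\le C(2^j/r)^p\int_{B_j}w_j^p\,dx+C\,S_j\int_{B_j}w_j\psi_j^p\,dx,
\end{equation*}
where $S_j:=\esssup_{x\in\supp\psi_j}\int_{\mathbb{R}^n\setminus B_j}w_j(y)^{p-1}|y-x|^{-n-ps}\,dy$. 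Since $\supp\psi_j$ lies strictly inside $B_j$, the triangle inequality gives $|y-x_0|\le C\,2^j|y-x|$ for $y\notin B_j$; splitting $\mathbb{R}^n\setminus B_j=(B_r\setminus B_j)\cup(\mathbb{R}^n\setminus B_r)$, using $w_j\le u_+$, H\"older, $\mathbb{R}^n\setminus B_r\subset\mathbb{R}^n\setminus B_{r/2}$, and $r\le 1$, I obtain
\begin{equation*}
S_j\le C\,2^{j(n+ps)}\,r^{-p}\Bigl[\Tail(u_+;x_0,r/2)^{p-1}+\bigl(\fint_{B_r}u_+^p\,dy\bigr)^{(p-1)/p}\Bigr].
\end{equation*}

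Next I apply Lemma \ref{c.omega_sobo} to $w_j\psi_j$ on $B_j$ to upgrade the gradient control into an $L^{p\kappa}$ bound for $w_j$ on $B_{j+1}$. Combining with H\"older (exponents $\kappa,\tfrac{\kappa}{\kappa-1}$) on $\int_{B_{j+1}}w_{j+1}^p\le\int_{B_{j+1}\cap\{u>k_{j+1}\}}w_j^p\,dx$ and the Chebyshev bound $|\{u>k_{j+1}\}\cap B_{j+1}|\le(2^{j+1}/k)^p|B_j|Y_j^p$ (from $k_{j+1}-k_j=k\,2^{-j-1}$), one extracts a recursive inequality of the schematic form
\begin{equation*}
Y_{j+1}\le c_0\,b^{\,j}\,k^{-\beta}\,Y_j^{1+\beta}+(\text{tail piece involving }\Tail(u_+;x_0,r/2)),\qquad \beta=\tfrac{\kappa-1}{\kappa}.
\end{equation*}
Setting $k=\delta\,\Tail(u_+;x_0,r/2)+M\bigl(\fint_{B_r}u_+^p\bigr)^{1/p}$, the first summand of $k$ absorbs the tail piece into a fraction of $Y_{j+1}$, while the choice $M\ge c\,\delta^{-(p-1)\kappa/(p(\kappa-1))}$ makes the residual pure-power recursion satisfy the initial smallness $Y_0\le c_0^{-1/\beta}b^{-1/\beta^2}$ of Lemma \ref{iteration}. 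That lemma then yields $Y_j\to 0$, which is \eqref{locbd}.

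The principal difficulty is the tail term. It has inhomogeneity $p-1$ rather than $p$ and, in Lemma \ref{energyest}, couples to $\int w_j\psi_j^p$ rather than $\int w_j^p$, so reducing it to a recursion in $Y_j$ forces an extra H\"older step that introduces a compensating power of $k$. Calibrating the splitting of $k$ into its $\Tail$ and $L^p$-average pieces so that (i)~the tail piece of the recursion is absorbed by the $\delta\,\Tail$ part of $k$, (ii)~the pure-power recursion from the first summand matches Lemma \ref{iteration}, and (iii)~the resulting $\delta^{-1}$-exponent is exactly $(p-1)\kappa/(p(\kappa-1))$, is the technical crux; the assumption $r\le 1$ and the choice of tail radius $r/2$ inside $\Tail$ are precisely what makes these exponents align.
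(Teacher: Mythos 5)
Your proposal is correct and follows essentially the same route as the paper's proof: a De Giorgi iteration built on Lemma \ref{energyest}, the Sobolev inequality \eqref{e.friedrich} and Lemma \ref{iteration}, with the level split $k=\delta\,\Tail(u_+;x_0,\tfrac r2)+M\big(\fint_{B_r}u_+^p\,dx\big)^{1/p}$ and $M\sim\delta^{-\frac{(p-1)\kappa}{p(\kappa-1)}}$ absorbing the tail exactly as in the paper. The only cosmetic differences are that the paper handles the linear coupling $\int w\,\psi^p$ via intermediate truncation levels $\bar k_j$ and the pointwise bound $w_j^p\ge(\bar k_j-k_j)^{p-1}\bar w_j$ (turning the tail into a multiplicative factor $\delta^{1-p}$ in a pure-power recursion) rather than your Chebyshev--H\"older step, and it keeps $\Tail(w_0;x_0,\tfrac r2)$ instead of splitting the exterior integral into an annulus and a far region.
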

\begin{proof}
Let $B_r(x_0)\subset\Omega$ with $r\in(0,1]$. For $j=0,1,2,\dots$, we denote
$r_j=\frac{r}{2}(1+2^{-j})$, $\bar{r}_j=\frac{r_j+r_{j+1}}{2}$,
$B_j=B_{r_j}(x_0)$ and $\bar{B}_{j}=B_{\bar{r}_j}(x_0)$.
Let $(\psi_j)_{j=0}^{\infty}\subset C_c^{\infty}(\bar{B}_j)$ be a sequence of cutoff functions such that
$0\leq\psi_j\leq 1$ in $\bar{B}_{j}$, $\psi_j=1$ in $B_{j+1}$ and $|\nabla\psi_j|\leq\frac{2^{j+3}}{r}$ for every $j=0,1,2,\dots$.
For $j=0,1,2,\dots$ and $k,\bar{k}\geq 0$, we denote
$k_j=k+(1-2^{-j})\bar{k}$,
$\bar{k}_j=\frac{k_j+k_{j+1}}{2}$,
$w_j=(u-k_j)_{+}$ and $\bar{w}_j=(u-\bar{k}_j)_{+}$.
Then there exists a constant $c=c(n,p)>0$ such that
\begin{equation}\label{locest}
\begin{split}
\Big(\frac{\bar{k}}{2^{j+2}}\Big)^\frac{p(\kappa-1)}{\kappa}
\bigg(\fint_{B_{j+1}}w_{j+1}^p\,dx\bigg)^\frac{1}{\kappa}&=(k_{j+1}-\bar{k}_j)^\frac{p(\kappa-1)}{\kappa}
\bigg(\fint_{B_{j+1}}w_{j+1}^p\,dx\bigg)^\frac{1}{\kappa}\\
&\leq c\bigg(\fint_{\bar{B}_j}|\bar{w}_j\psi_j|^{p\kappa}\,dx\bigg)^\frac{1}{\kappa}, 
\end{split}
\end{equation}
where $\kappa$ is given by \eqref{kappa}. 
By the Sobolev inequality in \eqref{e.friedrich}, with $c=c(n,p,s)>0$, we obtain 
\begin{equation}\label{locbdSob1}
\begin{split}
\bigg(\fint_{\bar{B}_j}|\bar{w}_j\psi_j|^{p\kappa}\,dx\bigg)^\frac{1}{\kappa}
&\leq c r^{p-n}\int_{B_j}|\nabla(\bar{w}_j\psi_j)|^p\,dx\\
&\leq c r^{p-n}\bigg(\int_{B_j}\bar{w}_j^p |\nabla\psi_j|^p\,dx+\int_{B_j}\psi_j^{p}|\nabla\bar{w}_j|^p\,dx\bigg)=I_1+I_2.
\end{split}
\end{equation}
\textbf{Estimate of $I_1$:} Using the properties of $\psi_j$, for some $c=c(n,p,s)>0$, we have
\begin{equation}\label{locestI1}
I_1=cr^{p-n}\int_{B_j}\bar{w}_j^p |\nabla\psi_j|^p\,dx
\leq c2^{jp}\fint_{B_j} w_j^{p}\,dx.
\end{equation}
\textbf{Estimate of $I_2$:} By Lemma \ref{energyest}, with $c=c(n,p,s)$ and $C=(n,p,s,\Lambda)$ positive, we obtain
\begin{equation}\label{locestI2}
\begin{split}
I_2&=cr^{p-n}\int_{B_j}\psi_j^{p}|\nabla\bar{w}_j|^p\,dx\\
&\leq Cr^{p-n}\bigg(\int_{B_j}\bar{w}_j^p|\nabla\psi_j|^p\,dx
+\int_{B_j}\int_{B_j}\max\{\bar{w}_j(x),\bar{w}_j(y)\}^p|\psi_j(x)-\psi_j(y)|^p\,d\mu\\
&\qquad+\int_{B_j}\bar{w}_j(y)\psi_j(y)^p\,dy
\cdot\esssup_{y\in\supp\psi_j}\int_{\mathbb{R}^n\setminus B_j}{\bar{w}_j(x)}^{p-1}K(x,y)\,dx\bigg)\\
&=J_1+J_2+J_3.
\end{split}
\end{equation}
\textbf{Estimates of $J_1$ and $J_2$:} To estimate $J_1$, we use the estimate of $I_1$ in \eqref{locestI1} above and to estimate $J_2$, proceeding similarly as in the proof of the estimate $(4.5)$ in \cite[Page 1292]{Kuusilocal} and again using the properties of $\psi_j$, for every $r\in(0,1]$, we obtain  
\begin{equation}\label{locestJ12}
J_i\leq c(n,p,s,\Lambda)2^{jp}\fint_{B_j}w_j^p\,dx,\quad j=1,2.
\end{equation}
\textbf{Estimate of $J_3$:} We observe that $w_j^p\geq(\bar{k}_j-k_j)^{p-1}\bar{w}_j$. 
For any $\delta\in(0,1]$, we have 
\begin{equation}\label{locestJ3}
\begin{split}
J_3&=
C(n,p,s,\Lambda)r^{p-n}\int_{B_j}\bar{w}_j(y)\psi_j(y)^p\,dy
\cdot\esssup_{y\in\supp\psi_j}\int_{\mathbb{R}^n\setminus B_j}{\bar{w}_j(x)^{p-1}}K(x,y)\,dx\\
&\leq c2^{j(n+ps)}r^p\fint_{B_j}\frac{{w}_j(y)^p}{(\bar{k}_j-k_j)^{p-1}}\,dy\int_{\mathbb{R}^n\setminus B_j}\frac{w_j(x)^{p-1}}{|x-x_0|^{n+ps}}\,dx\\
&\leq c\frac{2^{j(n+ps+p-1)}}{\bar{k}^{p-1}}\Tail(w_0;x_0,\tfrac{r}{2})^{p-1}\fint_{B_j}w_j(y)^p\,dy\\
&\leq c2^{j(n+ps+p-1)}\delta^{1-p}\fint_{B_j}w_j(y)^p\,dy, 
\end{split}
\end{equation}
with $c=c(n,p,s,\Lambda)>0$, whenever $\bar{k}\geq\delta\Tail(w_0;x_0,\tfrac{r}{2})$.
Here we used the fact that
$$
\frac{|x-x_0|}{|x-y|}\leq\frac{|x-y|+|y-x_0|}{|x-y|}\leq 1+\frac{\bar{r}_j}{r_j-\bar{r}_j}\leq 2^{j+4},
$$
which holds for $x\in\mathbb{R}^n\setminus B_j$ and $y\in\supp\psi_j=\bar{B}_j$.

By applying \eqref{locestJ12} and \eqref{locestJ3} in \eqref{locestI2}, we obtain
\begin{equation}\label{locestI2final}
I_2\leq c(n,p,s,\Lambda)2^{j(n+ps+p-1)}\delta^{1-p}\fint_{B_j}w_j^p\,dx
\end{equation}
for every $\delta\in(0,1]$.
Inserting \eqref{locestI1} and \eqref{locestI2final} into \eqref{locbdSob1} we have
\begin{equation}\label{locbdSob12}
\begin{split}
\bigg(\fint_{\bar{B}_j}|\bar{w}_j\psi_j|^{p\kappa}\,dx\bigg)^\frac{1}{\kappa}&\leq c(n,p,s,\Lambda)2^{j(n+ps+p-1)}\delta^{1-p}\fint_{B_j}w_j^p\,dx.
\end{split}
\end{equation}
Setting
$$
Y_j=\bigg(\fint_{B_{j}}w_{j}^p\,dx\bigg)^\frac{1}{p},
$$
and
$$
\bar{k}=\delta\Tail(w_0;x_0,\tfrac{r}{2})+c_0^{\frac{1}{\beta}}b^{\frac{1}{\beta^2}}\bigg(\fint_{B_r(x_0)} w_{0}^p\,dx\bigg)^\frac{1}{p},
$$
where
$$
c_0=c(n,p,s,\Lambda)\delta^{\frac{(1-p)\kappa}{p}},
\quad 
b=2^{(\frac{n+ps+p-1}{p}+\frac{\kappa-1}{\kappa})\kappa}
\quad\text{and}\quad
\beta=\kappa-1.
$$
From \eqref{locest} and \eqref{locbdSob12} we obtain
\begin{equation}\label{lociteration}
\frac{Y_{j+1}}{\bar{k}}\leq c(n,p,s,\Lambda)2^{j(\frac{n+ps+p-1}{p}+\frac{\kappa-1}{\kappa})\kappa}\delta^{\frac{(1-p)\kappa}{p}}\Big(\frac{Y_j}{\bar{k}}\Big)^{\kappa}.
\end{equation}
Moreover, by the definition of $\bar{k}$ above we have
$$
\frac{Y_0}{\bar{k}}\leq c_0^{-\frac{1}{\beta}}b^{-\frac{1}{\beta^2}}.
$$
Thus from Lemma \ref{iteration}, we obtain $Y_j\to0$ as $j\to\infty$.
This implies that
$$
\esssup_{B_{\frac{r}{2}}(x_0)}\,u\leq k+\bar{k},
$$
which gives \eqref{locbd} by choosing $k=0$.
\end{proof}

\section{Oscillation estimates}
 
The following local H\"older continuity result for weak solutions of \eqref{maineqn} follows from Lemma \ref{osclemma} below.

\begin{Theorem}\label{Holder}(\textbf{H\"older continuity})
Let $u$ be a weak solution of  \eqref{maineqn}. Then $u$ is locally H\"older continuous in $\Omega$. 
Moreover, there exist constants $\alpha\in(0,\frac{p}{p-1})$ and $c=c(n,p,s,\Lambda)$, such that
\begin{equation}\label{Holderest}
\osc_{B_{\rho}(x_0)}\,u=\esssup_{B_{\rho}(x_0)}\,u-\essinf_{B_{\rho}(x_0)}\,u
\leq c\Big(\frac{\rho}{r}\Big)^\alpha\bigg(\Tail(u;x_0,r)+\bigg(\fint_{B_{2r}(x_0)}|u|^p\,dx\bigg)^\frac{1}{p}\bigg),
\end{equation}
whenever $B_{2r}(x_0)\subset\Omega$ with $r\in(0,1]$ and $\rho\in(0,r]$. 
Here $\Tail(\cdot)$ is given by \eqref{loctail}.
\end{Theorem}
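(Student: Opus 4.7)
The plan is to reduce \eqref{Holderest} to a geometric oscillation decay along a dyadic sequence of balls, which is exactly what the forthcoming Lemma \ref{osclemma} is designed to provide. Concretely, I would formulate that lemma so as to yield constants $\sigma,\eta\in(0,1)$ depending only on $n,p,s,\Lambda$, radii $r_j=\sigma^j r$, and a sequence $\omega_j$ with
$$
\omega_0 \le C\bigg(\Tail(u;x_0,r)+\bigg(\fint_{B_{2r}(x_0)}|u|^p\,dx\bigg)^{1/p}\bigg),\qquad \omega_{j+1}\le\eta\,\omega_j,\qquad \osc_{B_{r_j}(x_0)}u\le\omega_j.
$$
Granted this, the estimate \eqref{Holderest} follows by standard interpolation: for $\rho\in(0,r]$ pick $j$ with $r_{j+1}<\rho\le r_j$, so that $\osc_{B_\rho(x_0)}u\le\omega_j\le\eta^{-1}\omega_0(\rho/r)^\alpha$ with $\alpha=\log(1/\eta)/\log(1/\sigma)$. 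The admissible range $\alpha<p/(p-1)$ records the fact that $\eta$ cannot be chosen too close to $0$ without breaking the tail bookkeeping in the inductive step below.

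The inductive step $\omega_{j+1}\le\eta\,\omega_j$ is the heart of the matter and I would argue it through a De Giorgi alternative at each scale. Set $u_j=u-\tfrac12(\esssup_{B_{r_j}}u+\essinf_{B_{r_j}}u)$, so $|u_j|\le\omega_j/2$ in $B_{r_j}(x_0)$. Up to replacing $u$ by $-u$ and relying on Lemma \ref{Solsubsup}, one may assume $|\{u_j\le 0\}\cap B_{r_j/2}(x_0)|\ge\tfrac12|B_{r_j/2}(x_0)|$. Apply Corollary \ref{cor} to the nonnegative supersolution $w=\omega_j/2-u_j+\varepsilon\omega_j$, with $a=\omega_j/2$, $d=\varepsilon\omega_j$ for a small $\varepsilon>0$, and a large parameter $b$. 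Combined with the lower density assumption and a Chebyshev argument using the Poincaré-type bound of Corollary \ref{cor}, this forces the superlevel set $\{w\le\delta\omega_j\}\cap B_{r_j/4}(x_0)$ to have arbitrarily small relative measure as $\delta\searrow 0$.

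I would then run a De Giorgi iteration on a sequence of truncations $(\omega_j/2-u_j-k_i)_+$ with levels $k_i\nearrow\omega_j/2-\nu\omega_j$, using the energy estimate of Lemma \ref{energyest} applied to $w$, the Sobolev embedding of Lemma \ref{c.omega_sobo}, and the iteration lemma (Lemma \ref{iteration}) precisely as in the proof of Theorem \ref{thm1}. Choosing $\delta$ small enough that the starting density is below the threshold demanded by Lemma \ref{iteration}, the iteration concludes that $w\ge\nu\omega_j$ almost everywhere on $B_{r_j/8}(x_0)$, equivalently $u_j\le(\tfrac12-\nu)\omega_j$ on $B_{\sigma r_j}(x_0)$ with $\sigma=1/8$. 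Combined with the symmetric case, this gives $\osc_{B_{r_{j+1}}(x_0)}u\le(1-\nu)\omega_j$, and setting $\eta=1-\nu\in(0,1)$ closes the induction.

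The main obstacle is the bookkeeping of the nonlocal tail across scales. Each truncation $(u-k_i)_\pm$ produces a tail contribution on $B_{r_j}(x_0)$ generated by the values of $u$ on $\mathbb{R}^n\setminus B_{r_j}(x_0)$, and for both Lemma \ref{energyest} and Corollary \ref{cor} to propagate along the iteration in $j$ one must bound these tails by a universal multiple of $\omega_j$. Decomposing $\mathbb{R}^n\setminus B_{r_j}(x_0)$ into the initial complement $\mathbb{R}^n\setminus B_r(x_0)$ and the previous dyadic annuli $B_{r_{i-1}}(x_0)\setminus B_{r_i}(x_0)$, and using the inductive oscillation bound $\osc_{B_{r_i}}u\le\omega_i$ on each annulus, one obtains a geometric series in $i$ that converges exactly when $\eta$ is not too small relative to a fixed power of $\sigma$; this constraint is the arithmetic origin of the upper bound $\alpha<p/(p-1)$ appearing in the statement.
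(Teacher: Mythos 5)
Your proposal follows essentially the same route as the paper: Theorem \ref{Holder} is reduced to a dyadic oscillation decay lemma (the paper's Lemma \ref{osclemma}), whose inductive step combines the measure-theoretic alternative at the mid-level, the logarithmic estimate of Corollary \ref{cor} to make the bad sublevel set small, a De Giorgi iteration via Lemma \ref{energyest}, Lemma \ref{c.omega_sobo} and Lemma \ref{iteration}, and tail bookkeeping over the previous dyadic annuli, which is exactly where the restriction $\alpha<\frac{p}{p-1}$ arises. The only caution is that the smallness of the sublevel set is not obtained by letting the truncation level tend to zero freely: as in the paper, the level $\varepsilon\omega(r_j)$ with $\varepsilon=\eta^{\frac{p}{p-1}-\alpha}$ must be coupled to the scale ratio so that the term $d^{1-p}(r_{j+1}/r_j)^p\Tail(u_j;x_0,r_j)^{p-1}$ in Corollary \ref{cor} stays bounded, a constraint you correctly acknowledge in your closing paragraph.
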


We prove the next result by arguing similarly as in the proof of \cite[Lemma 5.1]{Kuusilocal}.

\begin{Lemma}\label{osclemma}
Let $u$ be a weak solution of \eqref{maineqn} and $0<r<\frac{R}{2}$ for some $R$ such that $B_{R}(x_0)\subset\Omega$ with $r\in(0,1]$. For $\eta\in(0,\frac{1}{4}]$, we set
$r_j=\eta^j\frac{r}{2}$ and $B_{j}=B_{r_{j}}(x_0)$ for $j=0,1,2,\dots$.
Denote
\begin{equation}\label{omm}
\frac{1}{2}\omega(r_0)=\Tail\big(u;x_0,\tfrac{r}{2}\big)+c\bigg(\fint_{B_r(x_0)}|u|^p\,dx\bigg)^\frac{1}{p},
\end{equation}
where $\Tail(\cdot)$ is given by \eqref{loctail}, $c=c(n,p,s,\Lambda)$ is the constant in \eqref{locbd} and let
\begin{equation}\label{om}
\omega(r_j)=\Big(\frac{r_j}{r_0}\Big)^{\alpha}\omega(r_0),
\quad j=1,2,\dots,
\end{equation}
for some $\alpha\in(0,\frac{p}{p-1})$.
Then
\begin{equation}\label{oscest}
\osc_{B_j}u\leq\omega(r_j),
\quad j=0,1,2,\dots.
\end{equation}
\end{Lemma}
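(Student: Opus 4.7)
The plan is to prove the statement by induction on $j$, following the strategy of \cite[Lemma 5.1]{Kuusilocal} but tracking carefully the additional local-operator contribution.

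\textbf{Base case.} For $j=0$, I would verify that $\osc_{B_0} u \leq \omega(r_0)$ by applying Theorem \ref{thm1} with $\delta=1$ to both $u_+$ and $u_-$. Since $u$ and $-u$ are both weak solutions (hence subsolutions), Lemma \ref{cutoffsub} gives that $u_+$ and $u_-$ are weak subsolutions of \eqref{maineqn}. Theorem \ref{thm1} then yields
\[
\esssup_{B_{r/2}(x_0)} u_\pm \leq \Tail(u;x_0,\tfrac{r}{2}) + c\bigg(\fint_{B_r(x_0)}|u|^p\,dx\bigg)^{1/p} = \tfrac{1}{2}\omega(r_0),
\]
and summing the two estimates gives $\osc_{B_0} u \leq \omega(r_0)$.

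\textbf{Inductive step.} Assume \eqref{oscest} for all indices $\leq i$, and let $M_i=\esssup_{B_i}u$, $m_i=\essinf_{B_i}u$. Up to replacing $u$ with $-u$ (a weak solution with the same oscillation) I may assume
\[
\Big|\{x\in B_{i+1}: 2u(x)\geq M_i+m_i\}\Big|\leq \tfrac{1}{2}|B_{i+1}|.
\]
Shift: set $v=u-m_i$, so $v\geq 0$ on $B_i=B_{r_i}(x_0)$. The inductive hypothesis gives $v_-(y)\leq \omega(r_j)$ for $y\in B_j$, $0\leq j\leq i-1$, and outside $B_0$ one uses the definition of $\omega(r_0)$. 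Splitting $\mathbb{R}^n\setminus B_i$ into the dyadic annuli $B_j\setminus B_{j+1}$ ($j<i$) and $\mathbb{R}^n\setminus B_0$, and summing the resulting geometric series, one obtains a bound
\[
d^{1-p}r_i^{-p}\Tail(v_-;x_0,r_i)^{p-1} \leq C\,d^{1-p}\,r_i^{-p}\,\omega(r_i)^{p-1}\cdot\Sigma(\alpha,\eta),
\]
where $\Sigma(\alpha,\eta)$ converges provided $\alpha(p-1)<p$, i.e.\ $\alpha<\tfrac{p}{p-1}$.

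\textbf{Log estimate and De Giorgi iteration.} Next, I would apply Corollary \ref{cor} to $v$ with $a=\omega(r_i)/4$, $d=\sigma\omega(r_i)$, $b$ large and $R=r_i$, $r=2r_{i+1}$. Combined with the measure condition, which ensures that the function $V$ from Corollary \ref{cor} vanishes on at least half of $B_{2r_{i+1}}(x_0)$, a standard BMO/John--Nirenberg argument forces
\[
\Big|\{v\leq\sigma\omega(r_i)\}\cap B_{2r_{i+1}}(x_0)\Big|\leq \tfrac{\nu}{2}|B_{2r_{i+1}}|
\]
for arbitrarily small $\nu>0$, provided $b$ is chosen large enough and $\sigma$ is small enough to dominate the tail contribution $\Sigma$ above. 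Then I would run a De Giorgi iteration based on the energy estimate in Lemma \ref{energyest} applied to the subsolution $(\sigma\omega(r_i)-v)_+$ on $B_{2r_{i+1}}(x_0)\subset B_{r_i/2}(x_0)$, arguing as in the proof of Theorem \ref{thm1}. Choosing $\nu$ sufficiently small makes the De Giorgi iteration converge to $0$, which yields $v\geq \tfrac{\sigma}{2}\omega(r_i)$ in $B_{i+1}$, and consequently $\osc_{B_{i+1}} u \leq (1-\tfrac{\sigma}{2})\omega(r_i)$. Picking $\alpha\in(0,\tfrac{p}{p-1})$ such that $\eta^\alpha=1-\tfrac{\sigma}{2}$ closes the induction.

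\textbf{Main obstacle.} The principal difficulty is quantitative control of the tail of $v_-$. The shift by $m_i$ breaks global nonnegativity, so the nonlocal operator sees all the preceding annuli; the geometric decay of the oscillations must beat the weight $|y-x_0|^{-n-ps}$ integrated against a ball of radius $r_j$, which dictates the sharp constraint $\alpha<\tfrac{p}{p-1}$. A related delicate balance is the choice of $d=\sigma\omega(r_i)$ in Corollary \ref{cor}: making $\sigma$ small is needed to propagate positivity, but the factor $d^{1-p}$ in the tail term of the log estimate then becomes large and must be absorbed against the convergent tail series, fixing $\sigma$ as an explicit function of $n$, $p$, $s$, $\Lambda$, $\eta$, and $\alpha$.
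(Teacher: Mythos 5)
Your proposal follows essentially the same route as the paper's proof, which is itself the adaptation of Di Castro--Kuusi--Palatucci \cite[Lemma 5.1]{Kuusilocal} to the mixed setting: the base case comes from Theorem \ref{thm1} applied to $u_+$ and $(-u)_+$, the two measure alternatives reduce the inductive step to a shifted nonnegative solution $u_j$ with $\lvert B_{2r_{j+1}}\cap\{u_j\geq\omega(r_j)/2\}\rvert\geq\tfrac12\lvert B_{2r_{j+1}}\rvert$, the logarithmic estimate of Corollary \ref{cor} shrinks the measure of the sublevel set, and a De Giorgi iteration via Lemma \ref{energyest} propagates positivity, with the tail of $u_j$ controlled by the inductive hypothesis on dyadic annuli and $\alpha<\tfrac{p}{p-1}$. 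One point you leave vague that the paper makes precise: the parameter you call $\sigma$ is fixed from the outset as $\varepsilon=\eta^{\frac{p}{p-1}-\alpha}$ rather than tuned ``small enough'' afterwards, and $\eta$ is then chosen in terms of the De Giorgi constants and the logarithmic constant $\hat C$ (itself depending only on $n,p,s,\Lambda$ and the gap $\tfrac{p}{p-1}-\alpha$), with $\alpha$ finally shrunk to enforce $\eta^\alpha\geq 1-\eta^{\frac{p}{p-1}-\alpha}$. Keeping this ordering explicit is needed to avoid a circularity in the parameter dependence that your sketch does not address, but the underlying argument is the same.
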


\begin{proof}
Lemma \ref{Solsubsup} and Lemma \ref{cutoffsub} imply that $u_+$ and $(-u)_+$ are weak subsolutions of \eqref{maineqn}. 
By applying Theorem \ref{thm1} with $u_+$ and $(-u)_+$, we observe that \eqref{oscest} holds true for $j=0$.

Suppose \eqref{oscest} holds for every $i=0,\ldots,j$ for some $j\in\{0,1,2,\dots\}$. To obtain \eqref{oscest}, by induction, it is enough to deduce \eqref{oscest} for $i=j+1$. We prove it in two steps below. In Step $1$, we obtain the estimate \eqref{s1} below related to $u_j$, where $u_j$ will be defined in \eqref{uj}. In Step $2$, we use the estimate \eqref{s1} along with an iteration argument to conclude the proof of \eqref{oscest}.

We observe that either
\begin{equation}\label{p1}
\frac{\big|B_{2 r_{j+1}}(x_0)\cap\big\{u\geq\essinf_{B_j}+\frac{\omega(r_j)}{2}\big\}\big|}{|B_{2 r_{j+1}}(x_0)|}\geq\frac{1}{2}
\end{equation}
or
\begin{equation}\label{p2}
\frac{\big|B_{2 r_{j+1}}(x_0)\cap\big\{u\leq\essinf_{B_j}+\frac{\omega(r_j)}{2}\big\}\big|}{|B_{2 r_{j+1}}(x_0)|}\geq\frac{1}{2}
\end{equation}
holds. Let
\begin{equation}\label{uj}
u_j=
\begin{cases}
u-\essinf_{B_j}u,\quad \text{if }\eqref{p1} \text{ holds},\\
\omega(r_j)-(u-\essinf_{B_j}u),\quad\text{if }\eqref{p2} \text{ holds}.
\end{cases}
\end{equation}
Then $u_j$ is a weak solution of \eqref{maineqn}. Also, in both cases of \eqref{p1} and \eqref{p2}, $u_j\geq 0$ in $B_j$ and 
\begin{equation}\label{p12}
\frac{\big|B_{2 r_{j+1}}(x_0)\cap\big\{u_j\geq\frac{\omega(r_j)}{2}\big\}\big|}{|B_{2 r_{j+1}}(x_0)|}\geq\frac{1}{2}.
\end{equation}
\textbf{Step $1$:} We claim that
\begin{equation}\label{s1}
\frac{|B_{2 r_{j+1}}(x_0)\cap\{u_j\leq 2\varepsilon\omega(r_j)\}|}{|B_{2 r_{j+1}}(x_0)|}\leq\frac{\hat{C}}{\log(\frac{1}{\eta})},
\end{equation}
where $\varepsilon=\eta^{\frac{p}{p-1}-\alpha}$ for some positive constant $\hat{C}$ depending only on $n,p,s,\Lambda$ and the difference between $\frac{p}{p-1}$ and $\alpha$ via the definition of $\varepsilon$. To this end, we will apply the logarithmic estimate from Corollary \ref{cor}, where a tail quantity appears. We set
\begin{equation}\label{mu}
\mu=\log\bigg(\frac{\frac{\omega(r_j)}{2}+\varepsilon\omega(r_j)}{3\varepsilon\omega(r_j)}\bigg)
=\log\bigg(\frac{\frac{1}{2}+\varepsilon}{3\varepsilon}\bigg)\approx\log\Big(\frac{1}{\varepsilon}\Big)
\end{equation}
and define 
\begin{equation}\label{theta}
\Theta=\min\bigg\{\bigg(\log\bigg(\frac{\frac{\omega(r_j)}{2}+\varepsilon \omega(r_j)}{u_j+\varepsilon \omega(r_j)}\bigg)\bigg)_+,\mu\bigg\}.
\end{equation}
By \eqref{p12} we have
\begin{equation}\label{lset}
\begin{split}
\mu&=\frac{1}{|B_{2 r_{j+1}}(x_0)\cap\{u_j\geq\frac{\omega(r_j)}{2}\}|}\int_{B_{2 r_{j+1}}(x_0)\cap\{u_j\geq\frac{\omega(r_j)}{2}\}}\mu\,dx\\
&=\frac{1}{|B_{2 r_{j+1}}(x_0)\cap\{u_j\geq\frac{\omega(r_j)}{2}\}|}\int_{B_{2 r_{j+1}}(x_0)\cap\{\Theta=0\}}\mu\,dx\\
&\leq\frac{2}{|B_{2 r_{j+1}}(x_0)|}\int_{B_{2 r_{j+1}}(x_0)}(\mu-\Theta)\,dx=2(\mu-(\Theta)_{B_{2 r_{j+1}}(x_0)}),
\end{split}
\end{equation}
where $(\Theta)_{B_{2 r_{j+1}}(x_0)}=\fint_{B_{2 r_{j+1}}(x_0)}\Theta\,dx$. 
Integrating \eqref{lset} over the set $\{B_{2 r_{j+1}}(x_0)\cap \Theta=\mu\}$ we get
\begin{equation}\label{lsetj}
\begin{split}
\frac{|B_{2 r_{j+1}}(x_0)\cap\{\Theta=\mu\}|}{|B_{2 r_{j+1}}(x_0)|}\mu&\leq\frac{2}{|B_{2 r_{j+1}}(x_0)|}\int_{B_{2 r_{j+1}}(x_0)}|\Theta-(\Theta)_{B_{2 r_{j+1}}(x_0)}|\,dx.
\end{split}
\end{equation}
Applying Corollary \ref{cor} with $a=\frac{\omega(r_j)}{2}$, $d=\varepsilon\omega(r_j)$ and $b=e^\mu$ for some constant $c=c(n,p,s,\Lambda)$ we obtain
\begin{equation}\label{c1app}
\begin{split}
\fint_{B_{2 r_{j+1}}(x_0)}|\Theta-(\Theta)_{B_{2 r_{j+1}}(x_0)}|^p\,dx
\leq c\bigg((\varepsilon\omega(r_j))^{1-p}\Big(\frac{r_{j+1}}{r_j}\Big)^p\mathrm{Tail}(u_j;x_0,r_j)^{p-1}+1\bigg).
\end{split}
\end{equation}
Noting that $\eta\in(0,\frac{1}{4}]$, $\alpha\in(0,\frac{p}{p-1})$ along with $r\in(0,1]$ and following the lines of the proof of the estimate $(5.6)$ in \cite[Pages 1294--1295]{Kuusilocal}, we obtain
\begin{equation}\label{tailest1}
\mathrm{Tail}(u_j;x_0,r_j)^{p-1}\leq c\eta^{-\alpha(p-1)}\omega(r_j)^{p-1}
\end{equation}
for some positive constant $c$ depending only on $n,p,s$ and the difference between $\frac{p}{p-1}$ and $\alpha$, but independent of $\eta$. Therefore, using the estimate \eqref{tailest1} in \eqref{c1app} we obtain
\begin{equation}\label{c1app12}
\begin{split}
\fint_{B_{2 r_{j+1}}(x_0)}|\Theta-(\Theta)_{B_{2 r_{j+1}}(x_0)}|\,dx\leq C
\end{split}
\end{equation}
for some positive constant $C$ depending only on $n,p,s,\Lambda$ and the difference between $\frac{p}{p-1}$ and $\alpha$. 
The estimate \eqref{s1} follows by employing \eqref{c1app12} in \eqref{lsetj}.\\
\textbf{Step $2$:} Now we use an iteration argument to obtain \eqref{oscest} for $i=j+1$. To this end, for every $i=0,1,2,\dots$, let
$\rho_i=(1+2^{-i})r_{j+1}$, $\hat{\rho}_i=\frac{\rho_i +\rho_{i+1}}{2}$, $B^i=B_{\rho_i}(x_0)$ and $\hat{B}^i=B_{\hat{\rho}_i}(x_0)$.
Recalling that $\varepsilon=\eta^{\frac{p}{p-1}-\alpha}$, we denote
 $k_i=(1+2^{-i})\varepsilon\omega(r_j)$ and
$$
A^i=B^i\cap\{u_j\leq k_i\},\quad i=0,1,2,\dots.
$$
Let $w_i=(k_i -u_j)_+$ and $(\psi_i)_{i=0}^{\infty}\subset C_c^{\infty}(\hat{B}^i)$ be such that $0\leq \psi_i\leq 1$ in $\hat{B}^i$, $\psi_i =1$ in $B^{i+1}$ and $|\nabla\psi_i|\leq\frac{c 2^i}{\rho_i}$ in $\hat{B}^i$, with $c=c(n,p)>0$. By applying the Sobolev inequality in \eqref{e.friedrich}, for $\kappa$ as defined in \eqref{kappa}, we obtain a constant $c=c(n,p,s)>0$ such that
\begin{equation}\label{Poincareless1}
\begin{split}
(k_i-k_{i+1})^p\bigg(\frac{|A^{i+1}|}{|B^{i+1}|}\bigg)^\frac{1}{\kappa}&\leq\bigg(\fint_{B^{i+1}}w_i^{\kappa p}\,dx\bigg)^\frac{1}{\kappa}\leq c \bigg(\fint_{B^{i}}w_i^{\kappa p}\psi_i^{\kappa p}\,dx\bigg)^\frac{1}{\kappa}\\
&\leq c r_{j+1}^p\fint_{B^i}|\nabla(w_i \psi_i)|^p\,dx\leq cr_{j+1}^p(I+J),
\end{split}
\end{equation}
where 
$$
I=\fint_{B^i}w_i^{p}|\nabla\psi_i|^p\,dx
\quad\text{and}\quad
J=\fint_{B^i}|\nabla w_i|^{p}\psi_{i}^p\,dx.
$$
\textbf{Estimate of $I$:} Since $u_j\geq 0$ in $B_{j}$, we have $w_i\leq k_i\leq 2\varepsilon\omega(r_j)$ in $B^i$. Thus, using the properties of $\psi_i$ above, for some constant $c=c(n,p)>0$, we have
\begin{equation}\label{Jite}
\begin{split}
I&=\fint_{B^i}w_i^{p}|\nabla\psi_i|^p\,dx\leq cr_{j+1}^{-p}(\varepsilon\omega(r_j))^p 2^{ip}\frac{|A^i|}{|B^i|}.
\end{split}
\end{equation}
\textbf{Estimate of $J$:} By Lemma \ref{energyest}, we obtain a constant $C=C(p,\Lambda)$ such that
\begin{equation}\label{tite}
\begin{split}
\int_{B^i}|\nabla w_i|^{p}\psi_{i}^p\,dx&\leq C(J_1+J_2+J_3),
\end{split}
\end{equation}
where
$$
J_1=\int_{B^i}w_{i}^p |\nabla\psi_i|^p\,dx,\quad J_2=\int_{B^i}\int_{B^i}{\max\{w_i(x),w_i(y)\}^p|\psi_i(x)-\psi_i(y)|^p}\,d\mu
$$
and
$$
J_3=\esssup_{x\in\hat{B}^i}\int_{{\mathbb{R}^n\setminus B^i}}{\frac{w_i(y)^{p-1}}{|x-y|^{n+ps}}}\,dy
\cdot\int_{B^i}w_i \psi_{i}^p\,dx.
$$
From \eqref{Jite} we have 
\begin{equation}\label{j1}
J_1\leq cr_{j+1}^{-p}(\varepsilon\omega(r_j))^p 2^{ip}|A^i|,
\end{equation}
with $c=c(n,p)>0$. For $x\in\hat{B^i}$ and $y\in\R^n\setminus B^{i}$, we have
\begin{equation}\label{nee1}
\begin{split}
\frac{1}{|y-x|}
&=\frac{1}{|y-x_0|}\frac{|y-x_0|}{|y-x|}
\leq\frac{1}{|y-x_0|}\Big(1+\frac{|x-x_0|}{|y-x|}\Big)\\
&\leq\frac{1}{|y-x_0|}\Big(1+\frac{\hat{\rho}_i}{\rho_i -\hat{\rho}_i}\Big)
\leq\frac{2^{i+4}}{|y-x_0|}. 
\end{split}
\end{equation}
By applying \eqref{nee1}, \eqref{tailest1}, the properties of $\psi_i$, $r\in(0,1]$ and proceeding along the lines of the proof of the estimates $(5.12)$ and $(5.15)$ in \cite[Page 1297]{Kuusilocal}, we obtain
\begin{equation}\label{jmm}
J_{m}\leq Cr_{j+1}^{-p}(\varepsilon\omega(r_j))^p 2^{i(n+p)}|A^i|,
\quad m=2,3,
\end{equation}
for some positive constant $C$ depending on $n,p,s,\Lambda$ and the difference between $\frac{p}{p-1}$ and $\alpha$. Using \eqref{j1} and \eqref{jmm} in \eqref{tite}, we obtain
\begin{equation}\label{Iite}
\begin{split}
J&=\fint_{B^i}|\nabla w_i|^{p}\psi_{i}^p\,dx\leq Cr_{j+1}^{-p}(\varepsilon\omega(r_j))^p 2^{i(n+p)}\frac{|A^i|}{|B^i|},
\end{split}
\end{equation}
for some positive constant $C$ depending only on $n,p,s,\Lambda$ and the difference between $\frac{p}{p-1}$ and $\alpha$. 
Let 
\[
Y_i=\frac{|A^i|}{|B^i|}, 
\quad i=0,1,2,\dots.
\] 
Noting that $k_i- k_{i+1}=2^{-i-1}\varepsilon \omega(r_j)$ and applying \eqref{Jite} and \eqref{Iite} in \eqref{Poincareless1}, we get
\[
Y_{i+1}\leq C 2^{i(2p+n)\kappa}Y_{i}^{\kappa},
\]
for some constant $C$ depending only on $n,p,s,\Lambda$ and the difference between $\frac{p}{p-1}$ and $\alpha$. From Step $1$, by \eqref{s1}, we have 
$$
Y_0\leq\frac{\hat{C}}{\log(\frac{1}{\eta})},
$$
for some positive constant $\hat{C}$ depending only on $n,p,s,\Lambda$ and the difference between $\frac{p}{p-1}$ and $\alpha$. Let 
$$
c_0=C,\quad b=2^{(2p+n)\kappa},\quad \beta=\kappa-1
\quad\text{and}\quad \eta_1=c_{0}^{-\frac{1}{\beta}}b^{-\frac{1}{\beta^2}}.
$$
By choosing $\eta=\frac{1}{2} \min\big\{\frac{1}{4},e^{-\frac{\hat{C}}{\eta_1}}\big\}$ we have $Y_0\leq\eta_1$. 
Thus by Lemma \ref{iteration} we deduce that $\lim_{i\to\infty}Y_i=0$ and therefore, 
$u_j\geq\varepsilon\omega(r_j)\text{ in }B_{j+1}$.
Using the definition of $u_j$ from \eqref{uj}, we obtain
\begin{equation}\label{oscj1}
\text{osc}_{B_{j+1}}\,u\leq (1-\varepsilon)\omega(r_j)=(1-\varepsilon)\eta^{-\alpha}\omega(r_{j+1})\leq\omega(r_{j+1}),
\end{equation}
where we have chosen $\alpha\in(0,\frac{p}{p-1})$ (depending on $n,p,s,\Lambda$) small enough such that
\[
\eta^\alpha\geq 1-\varepsilon=1-\eta^{\frac{p}{p-1}-\alpha}.
\]
Thus \eqref{oscj1} proves the induction estimate \eqref{oscest} for $i=j+1$. Hence the result follows.
\end{proof}

\section{Tail estimate}

The following tail estimate will be useful for us.

\begin{Lemma}\label{Tail}
Let $u$ be a weak solution of \eqref{maineqn} such that $u\geq 0$ in $B_R(x_0)\subset\Omega$.
There exists a positive constant $c=c(n,p,s,\Lambda)$ such that
\begin{equation}\label{tailest}
\Tail(u_{+};x_0,r)\leq c\esssup_{B_r(x_0)}\,u+c\Big(\frac{r}{R}\Big)^\frac{p}{p-1}\Tail(u_{-};x_0,R),
\end{equation}
whenever $0<r<R$ with $r\in(0,1]$.
Here $\Tail(\cdot)$ is given by \eqref{loctail}.
\end{Lemma}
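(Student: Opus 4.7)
My proof tests the weak formulation of \eqref{maineqn} with a nonnegative cutoff $\phi=\eta^p$ supported near $x_0$ and exploits that, far from this reference ball, the nonlocal integrand $\mathcal{A}(u(x,y))$ is sharply negative wherever $u(y)$ is large; this delivers a positive multiple of $\Tail(u_+;x_0,r)^{p-1}$ on one side of the resulting identity, while all remaining pieces are controlled by $M^{p-1}$ with $M:=\esssup_{B_r(x_0)} u$ (finite by Theorem \ref{thm1}) and by $\Tail(u_-;x_0,R)^{p-1}$.

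\textbf{Setup.} Fix $\eta\in C_c^\infty(B_r(x_0))$ with $\eta\equiv 1$ on $B_{r/2}(x_0)$, $0\leq\eta\leq 1$ and $|\nabla\eta|\leq C/r$. By the $(p-1)$-homogeneity of \eqref{maineqn} I may rescale $u$ so that $M=1$, reducing the goal to $\Tail(u_+;x_0,r)^{p-1}\leq C+C(r/R)^{p}\Tail(u_-;x_0,R)^{p-1}$. Inserting $\phi=\eta^p$ into \eqref{weaksubsupsoln}, the symmetry of $d\mu$ and the inclusion $\supp\eta\subset B_r(x_0)$ give the decomposition
\begin{equation*}
\int_{\mathbb{R}^n}\int_{\mathbb{R}^n}\mathcal{A}(u(x,y))(\eta^p(x)-\eta^p(y))\,d\mu = \int_{B_r}\int_{B_r}\mathcal{A}(u(x,y))(\eta^p(x)-\eta^p(y))\,d\mu + 2\int_{B_r}\int_{\mathbb{R}^n\setminus B_r}\mathcal{A}(u(x,y))\eta^p(x)\,d\mu,
\end{equation*}
which balances the local term $p\int|\nabla u|^{p-2}\nabla u\cdot\eta^{p-1}\nabla\eta\,dx$ coming from testing against $\phi$.

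\textbf{Extracting the tail.} For $x\in B_r$ and $y\in\mathbb{R}^n\setminus B_r$ with $u(y)\geq 2M$, the inequality $u(y)-u(x)\geq u_+(y)/2$ yields $-\mathcal{A}(u(x,y))\geq c\,u_+(y)^{p-1}$. Combined with $K(x,y)\asymp|y-x_0|^{-n-ps}$ for $x\in B_{r/2}$, $y\in\mathbb{R}^n\setminus B_r$, and $\int_{B_{r/2}}\eta^p\geq cr^n$, the cross term is bounded below by
\begin{equation*}
-2\int_{B_r}\int_{\mathbb{R}^n\setminus B_r}\mathcal{A}(u)\eta^p\,d\mu \geq cr^n\int_{\mathbb{R}^n\setminus B_r}\frac{u_+(y)^{p-1}}{|y-x_0|^{n+ps}}\,dy - CM^{p-1}r^{n-ps} - Cr^n R^{-p}\Tail(u_-;x_0,R)^{p-1}.
\end{equation*}
The correction $M^{p-1}r^{n-ps}$ absorbs the $\{0\leq u(y)<2M\}$ portion, while on $\{u(y)<0\}$ I use $|\mathcal{A}(u(x,y))|\leq C(M^{p-1}+u_-(y)^{p-1})$; since $u_-\equiv 0$ on $B_R$ this part is confined to $\mathbb{R}^n\setminus B_R$ and converts into $R^{-p}\Tail(u_-;x_0,R)^{p-1}$ after multiplying out the kernel.

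\textbf{Auxiliary terms and closing.} The inner $B_r\times B_r$ integral is at most $CM^{p-1}r^{n-ps}$, using $|u|\leq M$ on $B_r\subset B_R$ together with the standard cutoff estimate $\int_{B_r}\int_{B_r}|\eta(x)-\eta(y)|^p\,d\mu\leq Cr^{n-ps}$. The local term is handled by Young's inequality $|\nabla u|^{p-1}\eta^{p-1}|\nabla\eta|\leq\delta|\nabla u|^p\eta^p+C(\delta)|\nabla\eta|^p$, with $\int|\nabla u|^p\eta^p$ controlled by Lemma \ref{energyest} applied to the subsolution $u_+$ (Lemma \ref{cutoffsub}), which produces a bound of the form $Cr^{n-p}(M^p+M\,\Tail(u_+;x_0,r)^{p-1})$. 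Assembling everything, dividing by $r^n$, multiplying by $r^p$, and using $r\leq 1$, I obtain
\begin{equation*}
\Tail(u_+;x_0,r)^{p-1}\leq CM^{p-1} + C\Bigl(\frac{r}{R}\Bigr)^{p}\Tail(u_-;x_0,R)^{p-1} + \epsilon\,\Tail(u_+;x_0,r)^{p-1},
\end{equation*}
with $\epsilon$ arbitrarily small (after the rescaling $M=1$). Absorbing the last summand and taking $(p-1)$-th roots yields \eqref{tailest}. The main obstacle is the self-referential appearance of $\Tail(u_+;x_0,r)$ through the Caccioppoli estimate, which is what forces the rescaling step; once that is done, the correct exponent $p/(p-1)$ on $r/R$ follows from the identity $\int_{\mathbb{R}^n\setminus B_R}u_-^{p-1}|y-x_0|^{-n-ps}\,dy=R^{-p}\Tail(u_-;x_0,R)^{p-1}$ combined with the overall $r^p$ scaling.
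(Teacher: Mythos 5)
Your route is genuinely different from the paper's. The paper tests with $\phi=(u-2M)\psi^p$, $M=\esssup_{B_r(x_0)}u$: the extra factor $u-2M\le -M$ on $\supp\psi$ makes the cross term produce $M\,\Tail(u_+;x_0,r)^{p-1}$ directly, and the $B_r\times B_r$ term, estimated as in Di Castro--Kuusi--Palatucci, only ever involves $p$-th powers of cutoff increments; consequently no Caccioppoli bound and no absorption of $\Tail(u_+;x_0,r)$ are needed, one just adds the three lower bounds and divides by $M$. Your choice $\phi=\eta^p$ buys a simpler test function at the price of a self-referential tail term; the extraction of the tail from the cross term, the treatment of the region $\{u<0\}\subset\R^n\setminus B_R(x_0)$, and the absorption of the Caccioppoli contribution are all sound (the absorption is legitimate because the tail is finite, cf.\ Remark \ref{rkreg}, and can also be run without rescaling by taking the Young parameter $\delta\sim 1/M$; the degenerate case $M=0$ needs a one-line separate remark, an issue the paper's proof shares).

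There is, however, a concrete gap in your bound for the $B_r\times B_r$ piece. Its integrand is $|u(x)-u(y)|^{p-1}\,|\eta(x)^p-\eta(y)^p|\,K(x,y)$, so the cutoff increment enters to the \emph{first} power, while the estimate you invoke, $\int_{B_r}\int_{B_r}|\eta(x)-\eta(y)|^p\,d\mu\le Cr^{n-ps}$, controls its $p$-th power. Using $|u(x)-u(y)|\le M$ and $|\eta(x)^p-\eta(y)^p|\le p|\eta(x)-\eta(y)|$ leaves $M^{p-1}\int_{B_r}\int_{B_r}|\eta(x)-\eta(y)|\,d\mu$, and since $|\eta(x)-\eta(y)|\lesssim\min\{1,|x-y|/r\}$ this integral is finite (and of size $r^{n-ps}$) only when $ps<1$; for $ps\ge 1$ it diverges, so the step fails precisely in part of the range $1<p<\infty$, $0<s<1$ covered by the lemma. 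A repair within your scheme is to write $|\eta(x)^p-\eta(y)^p|\le p\max\{\eta(x),\eta(y)\}^{p-1}|\eta(x)-\eta(y)|$ and apply Young's inequality, which gives $\epsilon\int_{B_r}\int_{B_r}|u(x)-u(y)|^p\max\{\eta(x),\eta(y)\}^p\,d\mu+C(\epsilon)\int_{B_r}\int_{B_r}|\eta(x)-\eta(y)|^p\,d\mu$; the first term must then be controlled through the nonlocal part of the energy estimate (Lemma \ref{energyest}) and absorbed exactly as you absorb the local gradient term --- or one switches to the paper's test function, which sidesteps the issue altogether.
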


\begin{proof}
Let $M=\esssup_{B_r(x_0)}\,u$ and $\psi\in C_c^{\infty}(B_r(x_0))$ be a cutoff function such that
$0\leq\psi\leq 1$ in $B_r(x_0)$, $\psi=1$ in $B_{\frac{r}{2}}(x_0)$ and $|\nabla\psi|\leq\frac{8}{r}$ in $B_r(x_0)$.
By letting $w=u-2M$ and choosing $\phi=w\psi^p$ as a test function in \eqref{weaksubsupsoln} we obtain
\begin{equation}\label{tailtest}
\begin{split}
0&=\int_{B_r(x_0)}|\nabla u|^{p-2}\nabla u\cdot\nabla(w\psi^p)\,dx\\
&\qquad+\int_{B_r(x_0)}\int_{B_r(x_0)}\mathcal{A}(u(x,y))(w(x)\psi(x)^p-w(y)\psi(y)^p)\,d\mu\\
&\qquad+2\int_{B_r(x_0)}\int_{\mathbb{R}^n\setminus B_r(x_0)}\mathcal{A}(u(x,y))w(x)\psi(x)^p\,d\mu\\
&=I_1+I_2+I_3.
\end{split}
\end{equation}
\textbf{Estimate of $I_1$:} By Young's inequality, the estimate
\begin{equation*}
\begin{split}
|\nabla w|^{p-2}\nabla w\cdot\nabla(w\psi^p)
&=|\nabla w|^p \psi^p+p\psi^{p-1}w|\nabla w|^{p-2}\nabla w\cdot\nabla\psi\\
&\geq\frac{1}{2}|\nabla w|^p\psi^p-c(p)|w|^p|\nabla\psi|^p
-c(p)M^p|\nabla\psi|^p,
\end{split}
\end{equation*}
holds in $B_r(x_0)$.
By the properties of $\psi$, we have
\begin{equation}\label{tailestI1}
I_1=\int_{B_r(x_0)}|\nabla u|^{p-2}\nabla u\cdot\nabla(w\psi^p)\,dx\geq -c(p)M^p r^{-p}|B_r(x_0)|.
\end{equation}
\textbf{Estimate of $I_2$ and $I_3$:}
Proceeding along the lines of the proof of the estimates $(4.11)$ and $(4.9)$ in \cite[Pages 1827--1828]{KuusiHarnack} and using the fact that $r\in(0,1]$, 
we obtain a constant $c=c(n,p,s,\Lambda)>0$ such that
\begin{equation}\label{tailestI2}
\begin{split}
I_2&=\int_{B_r(x_0)}\int_{B_r(x_0)}\mathcal{A}(u(x,y))(w(x)\psi(x)^p-w(y)\psi(y)^p)\,d\mu\geq -cM^p r^{-p}|B_r(x_0)|,
\end{split}
\end{equation}
and
\begin{equation}\label{tailestI3}
\begin{split}
I_3&=2\int_{B_r(x_0)}\int_{\mathbb{R}^n\setminus B_r(x_0)}\mathcal{A}(u(x,y))w(x)\psi(x)^p\,d\mu
\geq cM r^{-p}\Tail(u_{+};x_0,r)^{p-1}|B_r(x_0)|\\
&\qquad-cMR^{-p}\Tail(u_{-};x_0,R)^{p-1}|B_r(x_0)|-cM^p r^{-p}|B_r(x_0)|. 
\end{split}
\end{equation}
The estimate in \eqref{tailest} follows by applying \eqref{tailestI1}, \eqref{tailestI2} and \eqref{tailestI3} in \eqref{tailtest}.
\end{proof}

\section{Expansion of positivity}

The following lemma shows that the expansion of positivity technique applies to mixed problems.

\begin{Lemma}\label{DGLemma}
Let $u$ be a weak supersolution of  \eqref{maineqn} such that $u\geq 0$ in $B_R(x_0)\subset\Omega$. 
Assume $k\geq 0$ and there exists $\tau\in(0,1]$ such that
\begin{equation}\label{expangiven}
\big|B_r(x_0)\cap\{u\geq k\}\big|\geq \tau|B_r(x_0)|,
\end{equation}
for some $r\in(0,1]$ with $0<r<\frac{R}{16}$. There exists a constant $\delta=\delta(n,p,s,\Lambda,\tau)\in(0,\frac{1}{4})$ such that
\begin{equation}\label{expan}
\essinf_{B_{4r}(x_0)}\,u\geq\delta k-\Big(\frac{r}{R}\Big)^\frac{p}{p-1}\Tail(u_{-};x_0,R),
\end{equation} 
where $\Tail(\cdot)$ is given by \eqref{loctail}.
\end{Lemma}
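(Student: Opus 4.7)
The plan is to follow the classical De Giorgi expansion-of-positivity scheme in two stages, a measure-propagation step using the logarithmic estimate from Corollary \ref{cor} and an $L^\infty$ De Giorgi iteration using the supersolution version of Lemma \ref{energyest}, with the tail of $u_-$ absorbed by a suitable shift. Set $d = (r/R)^{p/(p-1)}\Tail(u_-;x_0,R)$ and $v = u + d$, which is nonnegative on $B_R(x_0)$. It suffices to show $v \geq \delta(k+d)$ on $B_{4r}(x_0)$, since this yields $u \geq \delta k - (1-\delta)d \geq \delta k - (r/R)^{p/(p-1)}\Tail(u_-;x_0,R)$. The purpose of $d$ is that the combination $d^{1-p}(r/R)^p\Tail(u_-;x_0,R)^{p-1}$ appearing in Corollary \ref{cor} and controlling the analogous factor in Lemma \ref{energyest} then equals $1$, so every tail-borne factor in these estimates reduces to an order-one constant.

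For Stage 1, I would apply Corollary \ref{cor} on $B_{8r}(x_0) \subset B_{R/2}(x_0)$, which is permitted since $8r \leq R/2$ by the hypothesis $r < R/16$, with $a = k$, the same shift $d$, and a parameter $b > 1$ to be fixed. By \eqref{expangiven} and the inclusion $B_r(x_0) \subset B_{8r}(x_0)$, the set $\{u \geq k\} \cap B_{8r}(x_0)$ has relative measure at least $8^{-n}\tau$, and on it the truncation $\Theta = \min\{(\log((k+d)/v))_+, \log b\}$ vanishes. Splitting $\int_{B_{8r}}|\Theta - (\Theta)_{B_{8r}}|\,dx$ across this zero set and its complement, and applying Corollary \ref{cor}, whose right-hand side is uniformly bounded in $b$ by the choice of $d$, yields: for every $\nu \in (0,1)$ there exists $b = b(n,p,s,\Lambda,\tau,\nu)$ such that
\begin{equation*}
\bigl|B_{8r}(x_0) \cap \{v \leq b^{-1}(k+d)\}\bigr| \leq \nu\,|B_{8r}(x_0)|.
\end{equation*}

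For Stage 2, set $\delta_0 = b^{-1}$ and iterate on shrinking radii $\rho_j = 4r(1+2^{-j})$, truncation levels $k_j = \tfrac{\delta_0}{2}(k+d)(1+2^{-j})$, sets $A^j = B_{\rho_j}(x_0) \cap \{v \leq k_j\}$, and smooth cutoffs $\psi_j$ between consecutive balls. Applying Lemma \ref{energyest} in supersolution form at the level $k_j - d$, so that the truncation is $w_j = (k_j - v)_+$, together with Lemma \ref{c.omega_sobo} produces a Caccioppoli--Sobolev inequality. The nonlocal tail $\esssup_x \int_{\mathbb{R}^n \setminus B_{\rho_j}} w_j(y)^{p-1}|x-y|^{-n-ps}\,dy$ splits into a contribution over $B_R \setminus B_{\rho_j}$, where $u \geq 0$ forces $w_j \leq k_j$ and yields the standard local scaling, and a contribution over $\mathbb{R}^n \setminus B_R$, where $w_j \leq k_j + u_-$. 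The $u_-$ contribution produces, after multiplication by $\int w_j\psi_j^p \leq k_j|A^j|$, a term bounded by $R^{-p}\Tail(u_-;x_0,R)^{p-1}k_j|A^j|$. Using the identity $(r/R)^p\Tail(u_-;x_0,R)^{p-1} = d^{p-1}$ together with $k_j \geq (\delta_0/2)d$, this is at most $(2/\delta_0)^{p-1}r^{-p}k_j^p|A^j|$, a constant multiple of the local Caccioppoli term. One arrives at
\begin{equation*}
Y_{j+1} \leq C\,b_0^{\,j}\,Y_j^{1+\beta}, \qquad Y_j = \frac{|A^j|}{|B_{\rho_j}(x_0)|},
\end{equation*}
with $\beta = \kappa - 1$ and $C, b_0$ depending only on $n,p,s,\Lambda,\tau$. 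Choosing $\nu$ in Stage 1 so small that $Y_0 \leq \nu \leq C^{-1/\beta}b_0^{-1/\beta^2}$, Lemma \ref{iteration} gives $Y_j \to 0$, hence $v \geq (\delta_0/2)(k+d)$ on $B_{4r}(x_0)$. Taking $\delta = \delta_0/2$ finishes the proof.

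The principal obstacle is the joint calibration of the shift $d$: it must be small enough that subtracting it produces precisely the correction term claimed in \eqref{expan}, yet large enough to absorb the $d^{1-p}$-weighted tail factor in the logarithmic estimate and, uniformly across every iteration level $k_j$, to dominate the $R^{-p}\Tail(u_-;x_0,R)^{p-1}$ term arising from the exterior of $B_R$ in the energy estimate. The algebraic identity $(r/R)^p\Tail(u_-;x_0,R)^{p-1} = d^{p-1}$ is what reconciles all three requirements, and the hypothesis $r < R/16$ is used to place the intermediate ball $B_{8r}(x_0)$ safely inside $B_{R/2}(x_0)$ so that Corollary \ref{cor} is applicable there.
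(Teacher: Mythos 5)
Your two-stage plan---a logarithmic measure-propagation step followed by a De Giorgi iteration driven by Lemma \ref{energyest}---is the same architecture as the paper's proof. However, there is a genuine circularity in Stage 2 that you have not resolved, and it is precisely the point where the paper's proof introduces a reduction you omit.

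In Stage 2 you bound the exterior tail contribution by
$c\,r^{-p}d^{p-1}k_j|A^j|$ and then, using only $k_j\geq(\delta_0/2)d$, convert this to $c(2/\delta_0)^{p-1}r^{-p}k_j^p|A^j|$. That puts a factor $\delta_0^{-(p-1)}$ into the iteration constant, so $C$ in your recursion $Y_{j+1}\leq C\,b_0^j\,Y_j^{1+\beta}$ actually grows like $\delta_0^{-(p-1)}$ as $\delta_0\to0$. But $\delta_0=b^{-1}$ is exactly the quantity you need to shrink to make Stage~1 produce a small $Y_0$: the logarithmic estimate gives $Y_0\lesssim 1/(\tau\log(1/\delta_0))$, which decays only logarithmically in $1/\delta_0$, whereas the threshold $C^{-1/\beta}b_0^{-1/\beta^2}$ you need to beat decays like $\delta_0^{(p-1)/\beta}$, i.e.\ polynomially. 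The two are incompatible for small $\tau$: no choice of $\delta_0\in(0,1/4)$ can satisfy both, so the iteration does not close as written. Your assertion that $C$ depends only on $n,p,s,\Lambda,\tau$ is the hidden assumption that fails.

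The fix is the reduction the paper makes before running the iteration (their display \eqref{assume}): one may assume $\delta k\geq\big(\tfrac{r}{R}\big)^{p/(p-1)}\Tail(u_-;x_0,R)$, because otherwise the right-hand side of \eqref{expan} is negative and the conclusion is immediate from $u\geq0$ in $B_R(x_0)$. With this in force, $d\leq\delta k<k_j$ for every $j$, and the tail contribution is bounded by $c\,r^{-p}k_j^p|A^j|$ with a constant independent of $\delta$. The $\delta$-dependence then drops out of the iteration, and $\delta$ can be chosen afterwards from the Stage~1 estimate exactly as in the paper. Two further, minor points: Corollary \ref{cor} is stated for weak \emph{solutions}, so in Stage~1 you should invoke Lemma \ref{loglemma} directly (it is stated for supersolutions and is what the paper uses); and your choice $d=(r/R)^{p/(p-1)}\Tail(u_-;x_0,R)$ degenerates when the tail vanishes, which is why the paper works with $t_\epsilon=\tfrac12(r/R)^{p/(p-1)}\Tail(u_-;x_0,R)+\epsilon$ and lets $\epsilon\to0$ at the end.
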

\begin{proof}
We prove the lemma in two steps.\\
\textbf{Step 1.} Under the assumption in  \eqref{expangiven}, we claim that there exists a positive constant $c_1=c(n,p,s,\Lambda)$ such that 
\begin{equation}\label{expanstep1}
\Big|B_{6r}(x_0)\cap \Big\{u\leq 2\delta k-\frac{1}{2}\Big(\frac{r}{R}\Big)^\frac{p}{p-1}\Tail(u_{-};x_0,R)-\epsilon\Big\}\Big|
\leq \frac{c_1}{\tau\log\frac{1}{2\delta}}|B_{6r}(x_0)|
\end{equation}
for every $\delta\in(0,\frac{1}{4})$ and for every $\epsilon>0$.

Let $\epsilon>0$ and $\psi\in C_c^{\infty}(B_{7r}(x_0))$ be a cutoff function such that 
$0\leq\psi\leq 1$ in $B_{7r}(x_0)$, $\psi=1$ in $B_{6r}(x_0)$ and $|\nabla\psi|\leq\frac{8}{r}$ in $B_{7r}(x_0)$.
We denote $w=u+t_{\epsilon}$, where
$$
t_{\epsilon}=\frac{1}{2}\Big(\frac{r}{R}\Big)^\frac{p}{p-1}\Tail(u_{-};x_0,R)+\epsilon.
$$
Since $w$ is a weak supersolution of \eqref{maineqn}, we can choose $\phi=w^{1-p}\psi^p$ as a test function in \eqref{weaksubsupsoln} to obtain
\begin{equation}\label{DGLtsteqn1}
\begin{split}
0&\leq\int_{B_{8r}(x_0)}|\nabla w|^{p-2}\nabla w\cdot\nabla( w^{1-p}\psi^p)\,dx\\
&\qquad+\int_{B_{8r}(x_0)}\int_{B_{8r}(x_0)}\mathcal{A}( w(x,y))( w(x)^{1-p}\psi(x)^p- w(y)^{1-p}\psi(y)^p)\,d\mu\\
&\qquad+2\int_{\mathbb{R}^n\setminus B_{8r}(x_0)}\int_{B_{8r}(x_0)}\mathcal{A}( w(x,y)) w(x)^{1-p}\psi(x)^p\,d\mu\\
&=I_1+I_2+I_3.
\end{split}
\end{equation}
\textbf{Estimate of $I_1$:} Proceeding similarly as in the proof of \cite[Pages 717--718, Lemma 3.4]{Kin-Kuusi} and using the properties of $\psi$, we obtain a constant $c=c(p)>0$ such that
\begin{equation}\label{EstI1}
\begin{split}
I_1&=\int_{B_{8r}(x_0)}|\nabla w|^{p-2}\nabla w\cdot\nabla( w^{1-p}\psi^p)\,dx\leq -c\int_{B_{6r}(x_0)}|\nabla\log w|^p\,dx+cr^{n-p}.
\end{split}
\end{equation}
\textbf{Estimate of $I_2$:} Arguing as in the proof of the estimate of $I_1$ in \cite[page 1817]{KuusiHarnack} and using the fact that $r\in(0,1]$, we
obtain a constant $c=c(n,p,s,\Lambda)>0$ such that
\begin{equation}\label{EstI2}
\begin{split}
I_2&=\int_{B_{8r}(x_0)}\int_{B_{8r}(x_0)}\mathcal{A}( w(x,y))( w(x)^{1-p}\psi(x)^p- w(y)^{1-p}\psi(y)^p)\,d\mu\\
&\leq -\frac{1}{c}\int_{B_{6r}(x_0)}\int_{B_{6r}(x_0)}\bigg|\log\Big(\frac{ w(x)}{ w(y)}\Big)\bigg|^p\,d\mu+c r^{n-p}.
\end{split}
\end{equation}
\textbf{Estimate of $I_3$:} Here we follow the proof of the estimate of $I_2$ in \cite[Pages 1817--1818]{KuusiHarnack}. To this end, we write
\begin{equation}\label{estI3l}
\begin{split}
I_3&=2\int_{\mathbb{R}^n\setminus B_{8r}(x_0)}\int_{B_{8r}(x_0)}\mathcal{A}( w(x,y)) w(x)^{1-p}\psi(x)^p\,d\mu=2(I_3^{1}+I_3^{2}),
\end{split}
\end{equation}
where
$$
I_3^{1}=\int_{\mathbb{R}^n\setminus B_{8r}(x_0)\cap \{w(y)<0\}}\int_{B_{8r}(x_0)}\mathcal{A}( w(x,y)) w(x)^{1-p}\psi(x)^p\,d\mu
$$
and
$$
I_3^{2}=\int_{\mathbb{R}^n\setminus B_{8r}(x_0)\cap \{w(y)\geq 0\}}\int_{B_{8r}(x_0)}\mathcal{A}( w(x,y)) w(x)^{1-p}\psi(x)^p\,d\mu.
$$
\textbf{Estimate of $I_3^{1}$:} Using the definitions of $w$ and $t_{\epsilon}$, the assumption on the kernel $K$ and the fact that support of $\psi$ is inside $B_{7r}(x_0)$, we get
\begin{equation}\label{l11}
\begin{split}
I_3^{1}&\leq c r^n\int_{\mathbb{R}^n\setminus B_{8r}(x_0)}\Big(1+\frac{(w(y))_-}{t_{\epsilon}}\Big)^{p-1}|y-x_0|^{-n-ps}\,dy\\
&\leq cr^{n-ps}+cr^{n}t_{\epsilon}^{1-p}R^{-p}\mathrm{Tail}(u_-;x_0,R)^{p-1}\leq cr^{n-p},
\end{split}
\end{equation}
with $c=c(n,p,s,\Lambda)$. Here we also used the hypothesis that $u\geq 0$ in $B_R(x_0)$ and $r\in(0,1]$.\\
\textbf{Estimate of $I_3^{2}$:} Let $x\in B_{8r}(x_0)$. Suppose $y\in \mathbb{R}^n\setminus B_{8r}(x_0)$ such that $w(y)\geq 0$. If $w(x)-w(y)\leq 0$, then $\mathcal{A}(w(x,y))\leq 0$. If $w(x)-w(y)\geq 0$, then $\mathcal{A}(w(x,y))\leq w(x)^{p-1}$. Therefore, again using the assumption on the kernel $K$ and the fact that $\psi$ is supported in $B_{7r}(x_0)$ along with $r\in(0,1]$, we get
\begin{equation}\label{l12}
I_3^{2}\leq c\int_{\mathbb{R}^n\setminus B_{8r}(x_0)}\int_{B_{7r}(x_0)}|y-x_0|^{-n-ps}\,dx\,dy\leq cr^{n-p},
\end{equation}
with $c=c(n,p,s,\Lambda)$. By applying \eqref{l11} and \eqref{l12} in \eqref{estI3l}, we have 
\begin{equation}\label{estI3}
I_3\leq cr^{n-p},
\end{equation}
with $c=c(n,p,s,\Lambda)$.\\
By using \eqref{EstI1}, \eqref{EstI2} and \eqref{estI3} in \eqref{DGLtsteqn1}, we obtain
\begin{equation}\label{estI123}
\begin{split}
\int_{B_{6r}(x_0)}|\nabla\log w|^p\,dx
+\int_{B_{6r}(x_0)}\int_{B_{6r}(x_0)}\bigg|\log\Big(\frac{ w(x)}{ w(y)}\Big)\bigg|^p\,d\mu\leq cr^{n-p},
\end{split}
\end{equation}
for some constant $c=c(n,p,s,\Lambda)$. For $\delta\in(0,\frac{1}{4})$, we denote
$$
v=\biggl(\min\biggl\{\log\frac{1}{2\delta},\log\frac{k+t_{\epsilon}}{ w}\biggr\}\biggr)_{+}.
$$
By \eqref{estI123}, we have
\begin{equation}\label{pcase}
\int_{B_{6r}(x_0)}|\nabla v|^p\,dx
\leq\int_{B_{6r}(x_0)}|\nabla\log  w|^p\,dx\leq c r^{n-p}.
\end{equation}
From \eqref{pcase}, by H\"older's inequality and Poincar\'e inequality (see \cite[Theorem 2]{Evans}), we obtain
\begin{equation}\label{pcase1}
\int_{B_{6r}(x_0)}|v-(v)_{B_{6r}(x_0)}|\,dx
\leq c r^{1+\frac{n}{p'}}\bigg(\int_{B_{6r}(x_0)}|\nabla v|^p\,dx\bigg)^\frac{1}{p}\leq c|B_{6r}(x_0)|,
\end{equation}
where $p'=\frac{p}{p-1}$ and $(v)_{B_{6r}(x_0)}=\fint_{B_{6r}(x_0)}v\,dx$. We observe that
$\{v=0\}=\{ w\geq k+t_{\epsilon}\}=\{u\geq k\}$.
By the assumption \eqref{expangiven}, it follows that
\begin{equation}\label{useofgiven}
|B_{6r}(x_0)\cap\{v=0\}|\geq\frac{\tau}{6^n}|B_{6r}(x_0)|.
\end{equation}
Following the proof of \cite[Page 1819, Lemma 3.1]{KuusiHarnack} and using \eqref{useofgiven}, we obtain
\begin{equation}\label{pcase2}
\begin{split}
\log\,\frac{1}{2\delta}
&=\frac{1}{|B_{6r}(x_0)\cap\{v=0\}|}\int_{B_{6r}(x_0)\cap\{v=0\}}\Big(\log\frac{1}{2\delta}-v(x)\Big)\,dx\\
&\leq\frac{6^n}{\tau}\Big(\log\frac{1}{2\delta}-(v)_{B_{6r}}\Big).
\end{split}
\end{equation}
Now integrating \eqref{pcase2} over the set $B_{6r}(x_0)\cap\{v=\log\frac{1}{2\delta}\}$ and using \eqref{pcase1}, we obtain a constant $c_1=c_1(n,p,s,\Lambda)$ such that 
$$
\Big|\Big\{v=\log\frac{1}{2\delta}\Big\}\cap B_{6r}(x_0)\Big|\log\frac{1}{2\delta}\leq\frac{6^n}{\tau}\int_{B_{6r}(x_0)}|v-(v)_{B_{6r}(x_0)}|\,dx
\leq\frac{c_1}{\tau}|B_{6r}(x_0)|.
$$
Hence, for any $\delta\in(0,\frac{1}{4})$, we have
$$
\big|B_{6r}(x_0)\cap\{ w\leq 2\delta(k+t_{\epsilon})\}\big|\leq\frac{c_1}{\tau}\frac{1}{\log\frac{1}{2\delta}}|B_{6r}(x_0)|.
$$
This implies \eqref{expanstep1}.\\
\textbf{Step 2.} We claim that, for every $\epsilon>0$, there exists a constant $\delta=\delta(n,p,s,\Lambda,\tau)\in(0,\frac{1}{4})$ such that
\begin{equation}\label{expanaux}
\essinf_{B_{4r}(x_0)}\,u\geq\delta k-\Big(\frac{r}{R}\Big)^\frac{p}{p-1}\Tail(u_{-};x_0,R)-2\epsilon.
\end{equation}
 As a consequence of \eqref{expanaux}, the property \eqref{expan} follows.

To prove \eqref{expanaux}, without loss of generality, we may assume that
\begin{equation}\label{assume}
\delta k\geq\Big(\frac{r}{R}\Big)^\frac{p}{p-1}\Tail(u_{-};x_0,R)+2\epsilon.
\end{equation}
Otherwise \eqref{expanaux} holds tue, since $u\geq 0$ in $B_R(x_0)$.

Let $\rho\in[r,6r]$ and $\psi\in C_c^{\infty}(B_{\rho}(x_0))$ be a cutoff function such that $0\leq\psi\leq 1$ in $B_{\rho}(x_0)$.
For any $l\in(\delta k,2\delta k)$, from Lemma \ref{energyest} and the proof of \cite[Pages 1820--1821, Lemma 3.2]{KuusiHarnack} for $w=(l-u)_{+}$, 
for some constant $c=c(n,p,s,\Lambda)$, we obtain
\begin{equation}\label{energyapp}
\begin{split}
&\int_{B_{\rho}(x_0)}\psi^p|\nabla\,w|^p\,dx
+\int_{B_{\rho}(x_0)}\int_{B_{\rho}(x_0)}|w(x)\psi(x)-w(y)\psi(y)|^p\,d\mu\\
&\leq c\int_{B_{\rho}(x_0)}w^p\,|\nabla\psi|^p\,dx+c\int_{B_{\rho}(x_0)}\int_{B_{\rho}(x_0)}\max\{w(x),w(y)\}^p|\psi(x)-\psi(y)|^p\,d\mu\\
&\qquad+cl\esssup_{x\in\supp\psi}\int_{{\mathbb{R}^n}\setminus B_{\rho}(x_0)}\big(l+(u(y))_{-}\big)^{p-1}\,K(x,y)\,dy
\cdot|B_{\rho}(x_0)\cap\{u<l\}|\\
&=J_1+J_2+J_3.
\end{split}
\end{equation}
We apply Lemma \ref{iteration} to conclude the proof. For $j=0,1,2,\dots$, we denote
\begin{equation}\label{parameter}
l=k_j=\delta k+2^{-j-1}\delta k,
\quad
\rho=\rho_j=4r+2^{1-j}r,
\quad
\hat{\rho_j}=\frac{\rho_j+\rho_{j+1}}{2}.
\end{equation}
Then $l\in(\delta k,2\delta k)$, $\rho_j,\hat{\rho_j}\in(4r,6r)$ and 
$$
k_j-k_{j+1}=2^{-j-2}\delta k\geq 2^{-j-3}k_j
$$
for every $j=0,1,2,\dots$.
Set $B_j=B_{\rho_j}(x_0),\,\hat{B}_j=B_{\hat{\rho}_j}(x_0)$ and we observe that
$$
w_j=(k_j-u)_{+}\geq 2^{-j-3} k_j\chi_{\{u<k_{j+1}\}}.
$$
Let $(\psi_j)_{j=0}^{\infty}\subset C_c^{\infty}(\hat{B}_j)$ be a sequence of cutoff functions such that
$0\leq\psi_j\leq 1$ in $\hat{B}_j$, $\psi_j=1$ in $B_{j+1}$ and $|\nabla\psi_j|\leq\frac{2^{j+3}}{r}$.
We choose $\psi=\psi_j$, $w=w_j$ in \eqref{energyapp}. By the properties of $\psi_j$, we obtain
\begin{equation}\label{I1jest}
J_1=\int_{B_j}w_j^p|\nabla\psi_j|^p\,dx\leq{c(p)2^{jp}}k_j^{p}r^{-p}|B_j\cap\{u<k_j\}|.
\end{equation}
Now proceeding along the lines of the the proof of \cite[Page 1822, Lemma 3.2]{KuusiHarnack}, for any $r\in(0,1]$, we get 
\begin{equation}\label{I2jest}
\begin{split}
J_2&=\int_{B_j}\int_{B_j}\max\{w_j(x),w_j(y)\}^p|\psi_j(x)-\psi_j(y)|^p\,d\mu\\
&\leq c(n,p,s,\Lambda)2^{jp} k_j^{p}r^{-p}|B_j\cap\{u<k_j\}|.
\end{split}
\end{equation}
To estimate $J_3$, we follow the proof of \cite[Page 1823, Lemma 3.2]{KuusiHarnack}. To this end, we observe that, for any $x\in\mathrm{supp}\,\psi_j\subset\hat{B}_j$ and $y\in\mathbb{R}^n\setminus B_j$, we have
\begin{equation}\label{ne}
\frac{|y-x_0|}{|y-x|}=\frac{|y-x+x-x_0|}{|y-x|}\leq 1+\frac{|x-x_0|}{|y-x|}\leq 1+\frac{\hat{\rho}_j}{\rho_j -\hat{\rho}_j}=2^{j+4}.
\end{equation}
Using \eqref{ne} and the properties of the kernel $K$, we have
\begin{equation}\label{ne1}
\begin{split}
&\esssup_{x\in\supp\psi_j}\int_{\mathbb{R}^n\setminus B_j}\big(k_j+(u(y))_{-}\big)^{p-1}\,K(x,y)\,dy\\
&\qquad\leq c2^{j(n+ps)}\int_{\mathbb{R}^n\setminus B_j}\big(k_j+(u(y))_{-}\big)^{p-1}|y-x_0|^{-n-ps}\,dy\\
&\qquad\leq c2^{j(n+ps)}\bigg(k_j^{p-1}r^{-ps}+\int_{\mathbb{R}^n\setminus B_R(x_0)}(u(y))_{-}^{p-1}|y-x_0|^{-n-ps}\,dy\bigg)\\
&\qquad= c2^{j(n+ps)}\Big(k_j^{p-1}r^{-p}+r^{-p}\big(\frac{r}{R}\big)^p \mathrm{Tail}(u_-;x_0,R)^{p-1}\Big)\\
&\qquad\leq c2^{j(n+ps)}k_j^{p-1}r^{-p},
\end{split}
\end{equation}
with $c=c(n,p,s,\Lambda)$.
Here we also used the fact that $r\in(0,1]$ along with \eqref{assume}, $\delta k<k_j$ and the fact that $u\geq 0$ in $B_R(x_0)$.
Therefore, from \eqref{ne1}, we obtain
\begin{equation}\label{I3jest}
\begin{split}
J_3&=ck_j\esssup_{x\in\supp\psi_j}\int_{\mathbb{R}^n\setminus B_j}\big(k_j+(u(y))_{-}\big)^{p-1}\,K(x,y)\,dy
\cdot|B_j\cap\{u<k_j\}|\\
&\leq c2^{j(n+ps)}k_j^{p}r^{-p}|B_j\cap\{u<k_j\}|,
\end{split}
\end{equation}
with $c=c(n,p,s,\Lambda)$.
By using \eqref{I1jest}, \eqref{I2jest} and \eqref{I3jest} in \eqref{energyapp}, we obtain
\begin{equation}\label{J123est}
\int_{B_j}\psi_j^p|\nabla w_j|^p\,dx\leq c 2^{j(n+ps+p)}k_j^{p}r^{-p}|B_j\cap\{u<k_j\}|,
\end{equation}
with $c=c(n,p,s,\Lambda)$.
By applying the Sobolev inequality in \eqref{e.friedrich} along with \eqref{I1jest} and \eqref{J123est}, for $\kappa$ defined in \eqref{kappa}, we obtain a constant $c=c(n,p,s,\Lambda)$ such that
\begin{equation}\label{Poincareless}
\begin{split}
(k_j-k_{j+1})^p&\bigg(\frac{|B_{j+1}\cap\{u<k_{j+1}\}|}{|B_{j+1}|}\bigg)^\frac{1}{\kappa}
\leq\bigg(\fint_{B_{j+1}}w_j^{\kappa p}\psi_j^{\kappa p}\,dx\bigg)^\frac{1}{\kappa}\leq c \bigg(\fint_{B_{j}}w_j^{\kappa p}\psi_j^{\kappa p}\,dx\bigg)^\frac{1}{\kappa}\\
&\leq c r^{p}\fint_{B_j}|\nabla(w_j \psi_j)|^p\,dx\leq c 2^{j(n+ps+p)} k_j^{p}\frac{|B_j\cap\{u<k_j\}|}{|B_j|}.
\end{split}
\end{equation}
Let
$$
Y_j=\frac{|B_j\cap\{u<k_j\}|}{|B_j|},
\quad j=0,1,2,\dots.
$$
From \eqref{Poincareless} we have 
\begin{equation}\label{Iteapp}
Y_{j+1}\leq c_2\, 2^{j(n+2p+ps)\kappa} Y_j^{\kappa},
\quad j=0,1,2,\dots,
\end{equation}
for some constant $c_2=c_2(n,p,s,\Lambda)$.
We choose
$c_0=c_2$, $b=2^{(n+2p+ps)\kappa}>1$ and $\beta=\kappa-1>0$
in Lemma \ref{iteration}. By \eqref{assume}, we have
$$
k_0=\frac{3}{2}\delta k\leq 2\delta k-\frac{1}{2}\Big(\frac{r}{R}\Big)^\frac{p}{p-1}\Tail(u_{-};x_0,R)-\epsilon.
$$
By \eqref{expanstep1} we have
\begin{equation}\label{iteini}
Y_0\leq
\frac{\big|B_{6r}(x_0)\cap\big\{u\leq 2\delta k-\big(\frac{r}{R}\big)^\frac{p}{p-1}\Tail(u_{-};x_0,R)-\epsilon\big\}\big|}
{|B_{6r}(x_0)|}
\leq\frac{c_1}{\tau\log\frac{1}{2\delta}}
\end{equation}
for some constant $c_1=c_1(n,p,s,\Lambda)$ and for every $\delta\in(0,\frac{1}{4})$.
Using \eqref{iteini} we choose $\delta=\delta(n,p,s,\Lambda,\tau)\in(0,\frac{1}{4})$ such that
$$
0<\delta=\frac{1}{4}\exp\bigg(-\frac{c_1 c_0^\frac{1}{\beta} b^\frac{1}{\beta^2}}{\tau}\bigg)<\frac{1}{4},
$$
so that the estimate $Y_0\leq c_0^{-\frac{1}{\beta}}b^{-\frac{1}{\beta^2}}$ holds. 
By Lemma \ref{iteration} we conclude that $Y_j\to0$ as $j\to\infty$.
Therefore, we have 
\[
\essinf_{B_{4r}(x_0)}\,u\geq \delta k,
\] 
which gives \eqref{expanaux} and so \eqref{expan} holds.
\end{proof}

\section{Harnack inequalities}

Proceeding similarly as in the proof of \cite[Lemma 4.1]{KuusiHarnack}, along with an application of Lemma \ref{DGLemma}, we obtain the following preliminary version of 
the weak Harnack inequality, compared to Theorem \ref{thm3}.

\begin{Lemma}\label{WeakHarnacklemma}
Let $u$ be a weak supersolution of \eqref{maineqn} such that $u\geq 0$ in $B_R(x_0)\subset\Omega$. 
There exist constants $\eta=\eta(n,p,s,\Lambda)\in(0,1)$ and $c=c(n,p,s,\Lambda)\geq 1$ such that
\begin{equation}\label{wk}
\bigg(\fint_{B_r(x_0)}u^{\eta}\,dx\bigg)^\frac{1}{\eta}
\leq c\essinf_{B_r(x_0)}\,u+c\Big(\frac{r}{R}\Big)^\frac{p}{p-1}\Tail(u_{-};x_0,R),
\end{equation}
whenever $B_r(x_0)\subset B_R(x_0)$ with $r\in(0,1]$.
Here $\Tail(\cdot)$ is defined in \eqref{loctail}.
\end{Lemma}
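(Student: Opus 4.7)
The plan is to deduce the weak Harnack inequality from the expansion of positivity (Lemma \ref{DGLemma}) by producing a polynomial decay estimate on the distribution function of $u$ on $B_r(x_0)$ and then integrating via Cavalieri's principle. This is the De Giorgi--Moser pathway implemented in \cite[Lemma 4.1]{KuusiHarnack}, with the only modification being that the input single-scale measure-to-pointwise implication now comes from Lemma \ref{DGLemma}, which already encodes the mixed local/nonlocal nature of the operator.

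Set $T := (r/R)^{p/(p-1)} \Tail(u_{-}; x_0, R)$ and, for $\varepsilon > 0$,
\[
m := \essinf_{B_r(x_0)} u + T + \varepsilon.
\]
Since $u/m$ is a weak supersolution of \eqref{maineqn} that is nonnegative in $B_R(x_0)$, it suffices to show $\fint_{B_r(x_0)} (u/m)^\eta \, dx \leq c$ for $c = c(n,p,s,\Lambda)$ and then let $\varepsilon \to 0$. Applied to any sub-ball $B_\rho(x_1) \subset B_r(x_0)$ with $\rho < R/16$, the contrapositive of Lemma \ref{DGLemma} states that, for the constant $\delta(\tau) \in (0,1/4)$ associated with $\tau \in (0,1]$,
\[
\essinf_{B_{4\rho}(x_1)} u < \delta(\tau) k - \Big(\frac{\rho}{R}\Big)^{p/(p-1)} \Tail(u_{-}; x_0, R) \;\Longrightarrow\; |B_\rho(x_1) \cap \{u \geq k\}| < \tau |B_\rho(x_1)|.
\]
Because $\rho \leq r$ we have $(\rho/R)^{p/(p-1)} \leq (r/R)^{p/(p-1)}$, so the sub-scale tail is uniformly dominated by $T$, giving a single-scale measure-to-pointwise implication that is uniform across all scales $\rho \leq r/16$.

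A Krylov--Safonov type dyadic covering argument inside $B_r(x_0)$ now converts this implication into polynomial decay of super-level sets: there exist exponents $\eta_0 \in (0,1)$ and a constant $C_0 \geq 1$, depending only on $n, p, s, \Lambda$, such that
\[
|B_r(x_0) \cap \{u > t\}| \leq C_0 \Big(\frac{m}{t}\Big)^{\eta_0} |B_r(x_0)|, \qquad t \geq m.
\]
The iteration chooses $\tau$ so small that $\delta(\tau) < 1$ is compensated by passing from one dyadic scale to the next, forcing geometric decay of $|B_r \cap \{u > 2^j m\}|$ in $j$. Fixing any $\eta \in (0, \eta_0)$ and applying the layer-cake identity
\[
\fint_{B_r(x_0)} u^{\eta} \, dx = \eta \int_0^\infty t^{\eta - 1} \frac{|B_r(x_0) \cap \{u > t\}|}{|B_r(x_0)|} \, dt,
\]
splitting at $t = m$ and invoking the decay on $[m, \infty)$ yields a bound of the form $\fint_{B_r} u^\eta \, dx \leq c\, m^\eta$. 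Letting $\varepsilon \to 0$ gives \eqref{wk}.

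The delicate step is the covering argument producing the polynomial decay, where one must track both local and nonlocal contributions across many sub-scales without degrading the constants. The key quantitative feature that makes this possible in the mixed setting is precisely the monotonicity of the tail factor $(\rho/R)^{p/(p-1)}$ in $\rho$, which prevents uncontrolled accumulation of tail terms during iteration. Once Lemma \ref{DGLemma} is available in its stated form, this feature is automatic and the remaining argument proceeds as in the purely nonlocal case of \cite{KuusiHarnack}.
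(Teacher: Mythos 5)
Your proposal is correct in outline and takes essentially the same route as the paper, whose proof of this lemma consists precisely of combining Lemma \ref{DGLemma} with the argument of \cite[Lemma 4.1]{KuusiHarnack}: the measure-to-pointwise implication from the expansion of positivity at all sub-scales, a Krylov--Safonov covering argument yielding power decay of the super-level sets, and the layer-cake formula. The one point to tidy up is that applying Lemma \ref{DGLemma} on balls $B_\rho(x_1)$ with $x_1\neq x_0$ requires nonnegativity in a ball centered at $x_1$ and produces $\Tail(u_-;x_1,R')$ rather than the tail centered at $x_0$ that you quote; this is harmless because $u_-\equiv 0$ in $B_R(x_0)$ gives $\Tail(u_-;x_1,R')\leq c\,\Tail(u_-;x_0,R)$ under the implicit restriction that $B_r(x_0)$ sits well inside $B_R(x_0)$, but it should be stated.
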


The following Harnack inequality follows with a similar argument as in the proof of \cite[Theorem 1.1]{KuusiHarnack}. 
For convenience of the reader, we give a proof here in the mixed case. To this end, the following iteration lemma from \cite[Lemma 1.1]{GGacta} will be useful for us.
\begin{Lemma}\label{iteration1}
Let  $0\leq T_0\leq t\leq T_1$ and assume that $f:[T_1,T_2]\to[0,\infty)$ is a nonnegative bounded function. Suppose that for $T_0\leq t<s\leq T_1$, we have
\begin{equation}\label{itt}
f(t)\leq A(s-t)^{-\alpha} +B +\theta f(s),
\end{equation}
where $A,B,\alpha,\theta$ are nonegative constants and $\theta<1$. 
Then there exists a constant $c=c(\alpha,\theta)$ such that for every $\rho,R$ and $T_0\leq\rho<R\leq T_1$, we have
\begin{equation}\label{itt1}
f(\rho)\leq c(A(R-\rho)^{-\alpha}+B).
\end{equation}
\end{Lemma}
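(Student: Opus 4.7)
The plan is to apply a standard geometric iteration argument to the hypothesis \eqref{itt}. First, I would fix a parameter $\tau\in(0,1)$ whose precise value will be chosen at the end depending only on $\alpha$ and $\theta$, and define an increasing sequence $(t_i)_{i=0}^\infty\subset[\rho,R]$ by
\[
t_0=\rho,\qquad t_{i+1}-t_i=(1-\tau)\tau^i(R-\rho),\quad i=0,1,2,\dots,
\]
so that $t_i\nearrow R$ as $i\to\infty$.

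Next, I would iterate the inequality \eqref{itt}. Applying it with $t=t_i$ and $s=t_{i+1}$ gives
\[
f(t_i)\le A(t_{i+1}-t_i)^{-\alpha}+B+\theta f(t_{i+1}),
\]
and after $k$ iterations starting from $i=0$ I obtain
\[
f(\rho)=f(t_0)\le\sum_{i=0}^{k-1}\theta^i\bigl[A(t_{i+1}-t_i)^{-\alpha}+B\bigr]+\theta^k f(t_k).
\]
Substituting $(t_{i+1}-t_i)^{-\alpha}=(1-\tau)^{-\alpha}\tau^{-i\alpha}(R-\rho)^{-\alpha}$ and letting $k\to\infty$, the boundedness of $f$ together with $\theta<1$ forces the remainder term $\theta^k f(t_k)$ to vanish, yielding
\[
f(\rho)\le A(1-\tau)^{-\alpha}(R-\rho)^{-\alpha}\sum_{i=0}^{\infty}(\theta\tau^{-\alpha})^i+B\sum_{i=0}^{\infty}\theta^i.
\]

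The only real point of care is the choice of $\tau$: I would take $\tau\in(0,1)$ with $\tau^\alpha>\theta$, which is possible since $\theta<1$ and $\alpha\ge 0$. This ensures $\theta\tau^{-\alpha}<1$, so both geometric series converge, and the resulting constants $(1-\tau)^{-\alpha}(1-\theta\tau^{-\alpha})^{-1}$ and $(1-\theta)^{-1}$ depend only on $\alpha$ and $\theta$. Setting $c=c(\alpha,\theta)$ to be the maximum of these two bounds then gives \eqref{itt1}. The main (and only real) obstacle is selecting $\tau$ correctly so that the geometric sums converge while the factor $(1-\tau)^{-\alpha}$ remains finite; everything else is bookkeeping.
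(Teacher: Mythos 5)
Your proof is correct and is precisely the standard Giaquinta--Giusti iteration argument. The paper itself does not supply a proof of this lemma (it simply cites \cite[Lemma 1.1]{GGacta} for it), and what you wrote reproduces that reference's argument faithfully: the geometric interpolation $t_{i+1}-t_i=(1-\tau)\tau^i(R-\rho)$, the telescoped iteration of \eqref{itt}, the use of boundedness of $f$ to kill the remainder $\theta^k f(t_k)$, and the choice $\tau^\alpha>\theta$ (available since $\theta<1$) to make both geometric series converge. One small remark: the statement in the paper has a typo, writing $f:[T_1,T_2]\to[0,\infty)$ where $[T_0,T_1]$ is intended, and your proof correctly works on $[\rho,R]\subset[T_0,T_1]$.
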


\begin{Theorem}\label{thm2}(\textbf{Harnack inequality}).
Let $u$ be a weak solution of \eqref{maineqn} such that $u\geq 0$ in $B_R(x_0)\subset\Omega$. 
There exists a positive constant $c=c(n,p,s,\Lambda)$ such that
\begin{equation}\label{Harnackest}
\esssup_{B_{\frac{r}{2}}(x_0)}\,u\leq c\essinf_{B_r(x_0)}\,u+c\Big(\frac{r}{R}\Big)^\frac{p}{p-1}\Tail(u_{-};x_0,R),
\end{equation}
whenever $B_r(x_0)\subset B_\frac{R}{2}(x_0)$ and $r\in(0,1]$.
Here $\Tail(\cdot)$ is given by \eqref{loctail}.
\end{Theorem}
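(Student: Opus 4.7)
The plan is to combine the three main ingredients already at hand: the local boundedness estimate (Theorem \ref{thm1}), the tail estimate (Lemma \ref{Tail}) and the preliminary weak Harnack inequality (Lemma \ref{WeakHarnacklemma}), together with an interpolation between $L^\eta$ and $L^\infty$ and the absorption Lemma \ref{iteration1}. Since $u$ is a weak solution, in particular a weak subsolution in $B_R(x_0)$, for radii $\frac{r}{2}\leq \sigma <\tau \leq r$ I will write $M(\rho)=\esssup_{B_\rho(x_0)} u$ and aim at an estimate of the form
\[
M(\sigma)\leq \tfrac{1}{2} M(\tau)+C\bigl(\essinf_{B_r(x_0)} u+(r/R)^{p/(p-1)}\Tail(u_-;x_0,R)\bigr),
\]
from which Lemma \ref{iteration1} immediately yields \eqref{Harnackest}.

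First I would apply Theorem \ref{thm1} after a standard covering/rescaling of $B_\sigma(x_0)$ by balls of radius comparable to $\tau-\sigma$, yielding
\[
M(\sigma)\leq \delta\,\Tail(u_+;x_0,\sigma)+c\,\delta^{-\gamma}\Bigl(\frac{\tau}{\tau-\sigma}\Bigr)^{n/p}\!\Bigl(\fint_{B_\tau(x_0)}u^{p}\,dx\Bigr)^{1/p},
\]
with $\gamma=\frac{(p-1)\kappa}{p(\kappa-1)}$ and $\delta\in(0,1]$ to be chosen. Next, the tail on the right is handled by Lemma \ref{Tail}, applied on $B_\tau(x_0)\subset B_R(x_0)$, which gives
\[
\Tail(u_+;x_0,\sigma)\leq c\,M(\tau)+c\,(r/R)^{p/(p-1)}\Tail(u_-;x_0,R),
\]
using $\sigma\leq\tau\leq r$ and monotonicity of the negative-tail factor.

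The $L^{p}$-average in the second term is converted to a low-order norm by the interpolation
\[
\Bigl(\fint_{B_\tau} u^{p}\,dx\Bigr)^{1/p}\leq M(\tau)^{1-\eta/p}\Bigl(\fint_{B_\tau} u^{\eta}\,dx\Bigr)^{1/p},
\]
where $\eta\in(0,1)$ is the exponent of Lemma \ref{WeakHarnacklemma}. Young's inequality then splits this product into $\epsilon M(\tau)+C_\epsilon(\fint u^{\eta})^{1/\eta}$, and Lemma \ref{WeakHarnacklemma} on $B_\tau(x_0)$ bounds the $L^\eta$-average by $c\,\essinf_{B_r(x_0)} u+c\,(r/R)^{p/(p-1)}\Tail(u_-;x_0,R)$.

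Collecting these estimates, and choosing first $\delta$ small and then $\epsilon$ small depending on $n,p,s,\Lambda$, gives
\[
M(\sigma)\leq \tfrac{1}{2}M(\tau)+\frac{C}{(\tau-\sigma)^{n/p}}\Bigl(\essinf_{B_r(x_0)} u+(r/R)^{p/(p-1)}\Tail(u_-;x_0,R)\Bigr)
\]
for all $\frac{r}{2}\leq \sigma<\tau\leq r$, which is precisely the form \eqref{itt} with $\theta=\tfrac12<1$. Applying Lemma \ref{iteration1} on $[\frac{r}{2},r]$ yields \eqref{Harnackest}. The main technical obstacle is the consistent bookkeeping of the radii in the first step: Theorem \ref{thm1} is stated for the concentric pair $B_{r/2}\subset B_r$, so producing a version valid on the pair $B_\sigma\subset B_\tau$ with the correct $(\tau-\sigma)^{-n/p}$ dependence requires a covering argument and careful control of the tail terms of the sub-balls so that they remain dominated by $\Tail(u_+;x_0,\sigma)$ plus controlled error.
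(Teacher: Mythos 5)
Your plan follows the same route as the paper's proof: combine Theorem~\ref{thm1} with the tail estimate of Lemma~\ref{Tail}, interpolate the $L^p$ average between $L^\eta$ and $L^\infty$, absorb with Young plus Lemma~\ref{iteration1}, and finally invoke Lemma~\ref{WeakHarnacklemma}. One slip to watch: applying Lemma~\ref{WeakHarnacklemma} on $B_\tau(x_0)$ produces $\essinf_{B_\tau(x_0)}u$, which is $\geq\essinf_{B_r(x_0)}u$ (wrong direction), and moreover varies with $\tau$, which is not allowed in the constant $B$ of Lemma~\ref{iteration1}; fix this by first bounding $\fint_{B_\tau}u^\eta\leq 2^n\fint_{B_r}u^\eta$ and applying Lemma~\ref{WeakHarnacklemma} on the fixed ball $B_r(x_0)$, or — as the paper does — run the absorption iteration first to get $\esssup_{B_{r/2}}u\leq c\bigl(\fint_{B_r}u^\eta\bigr)^{1/\eta}+c(r/R)^{p/(p-1)}\Tail(u_-;x_0,R)$ and then invoke Lemma~\ref{WeakHarnacklemma} at the very end. (The exponent in $(\tau-\sigma)^{-\alpha}$ also changes from $n/p$ to $n/\eta$ after Young's inequality, but that is immaterial for Lemma~\ref{iteration1}.)
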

\begin{proof}
Let $0<\rho<r$. Then by Lemma \ref{Solsubsup} and Theorem \ref{thm1}, for every $\delta\in(0,1]$, there exists a positive constant $c=c(n,p,s,\Lambda)$ such that
\begin{equation}\label{h1eqn}
\esssup_{B_{\frac{\rho}{2}}(x_0)}u\leq \delta\mathrm{Tail}(u_{+};x_0,\frac{\rho}{2})+c\delta^{-\gamma}\bigg(\fint_{B_{\rho}(x_0)}u_{+}^p\,dx\bigg)^\frac{1}{p},
\end{equation}
where $\gamma=\frac{(p-1)\kappa}{p(\kappa-1)}$ and $\kappa$ as in \eqref{kappa}. 
By Lemma \ref{Tail} and \eqref{h1eqn}, we obtain
\begin{equation}\label{h2eqn}
\esssup_{B_{\frac{\rho}{2}}(x_0)}u
\leq c\delta\Big(\esssup_{B_{\rho}(x_0)}u+\Big(\frac{\rho}{R}\Big)^\frac{p}{p-1}\mathrm{Tail}(u_{-};x_0,R)\Big)+c\delta^{-\gamma}\bigg(\fint_{B_{\rho}(x_0)}u_{+}^p\,dx\bigg)^\frac{1}{p},
\end{equation}
for some constant $c=c(n,p,s,\Lambda)$. Let $\frac{1}{2}\leq\sigma^{'}<\sigma\leq 1$ and $\rho=(\sigma-\sigma^{'})r$. 
Using a covering argument, it follows that
\begin{equation}\label{h2eqn1}
\begin{split}
\esssup_{B_{\sigma^{'}r}(x_0)}u
&\leq c\frac{\delta^{-\gamma}}{(\sigma-\sigma^{'})^{\frac{n}{p}}}\bigg(\fint_{B_{\sigma r}}u^p\,dx\bigg)^\frac{1}{p} 
+c\delta\esssup_{B_{\sigma r}(x_0)}\,u+c\delta\Big(\frac{r}{R}\Big)^\frac{p}{p-1}\mathrm{Tail}(u_{-};x_0,R)\\
&\leq c\frac{\delta^{-\gamma}}{(\sigma-\sigma^{'})^{\frac{n}{p}}}(\esssup_{B_{\sigma r}(x_0)}\,u)^\frac{p-t}{p}
\bigg(\fint_{B_{\sigma r}(x_0)}u^t\,dx\bigg)^\frac{1}{p}+c\delta\esssup_{B_{\sigma r}(x_0)}\,u\\
&\qquad+c\delta\Big(\frac{r}{R}\Big)^\frac{p}{p-1}\mathrm{Tail}(u_{-};x_0,R)
\end{split}
\end{equation}
for every $t\in(0,p)$ with a constant $c=c(n,p,s\Lambda)$. 
Young's inequality with exponents $\frac{p}{t}$ and $\frac{p}{p-t}$ and choosing $\delta=\frac{1}{4c}$ in \eqref{h2eqn1} implies that
\begin{equation}\label{h2eqn2}
\esssup_{B_{\sigma^{'}r}(x_0)}u\leq \frac{1}{2}\esssup_{B_{\sigma r}(x_0)}u
+\frac{c}{(\sigma-\sigma^{'})^\frac{n}{t}}\bigg(\fint_{B_{r}(x_0)}u^t\,dx\bigg)^\frac{1}{t}+c\Big(\frac{r}{R}\Big)^\frac{p}{p-1}\mathrm{Tail}(u_{-};x_0,R)
\end{equation}
for every $t\in(0,p)$ with a constant $c=c(n,p,s,t,\Lambda)$. 
Using Lemma \ref{iteration1} in \eqref{h2eqn2}, we have
\begin{equation}\label{h3eqn}
\begin{split}
\esssup_{B_{\frac{r}{2}}(x_0)}\,u
&\leq c\bigg(\fint_{B_{r}(x_0)}u^t\,dx\bigg)^\frac{1}{t}+c\Big(\frac{r}{R}\Big)^\frac{p}{p-1}\mathrm{Tail}(u_{-};x_0,R),
\end{split}
\end{equation}
for every $t\in(0,p)$ with a positive constant $c=c(n,p,s,t,\Lambda)$. Combining the above estimate \eqref{h3eqn} with Lemma \ref{WeakHarnacklemma} and choosing $t=\eta\in(0,1)$, the result follows.
\end{proof}

We have the following weak Harnack inequality for supersolutions of \eqref{maineqn}.

\begin{Theorem}\label{thm3}(\textbf{Weak Harnack inequality}).
Let $u$ be a weak supersolution of \eqref{maineqn} such that $u\geq 0$ in $B_R(x_0)\subset\Omega$. 
There exists a positive constant $c=c(n,p,s,\Lambda)$ such that
\begin{equation}\label{wkest}
\bigg(\fint_{B_{\frac{r}{2}}(x_0)}u^{l}\,dx\bigg)^\frac{1}{l}
\leq c\essinf_{B_{r}(x_0)}\,u+c\Big(\frac{r}{R}\Big)^\frac{p}{p-1}\Tail(u_{-};x_0,R),
\end{equation}
whenever $B_r(x_0)\subset B_{\frac{R}{2}}(x_0)$, $r\in(0,1]$ and  $0<l<\kappa (p-1)$.
Here $\kappa$ and $\Tail(\cdot)$ are given by \eqref{kappa} and \eqref{loctail}, respectively.
\end{Theorem}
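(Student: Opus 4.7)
The plan is to upgrade the $L^\eta$ control of Lemma \ref{WeakHarnacklemma} to $L^l$ control for every $l\in(0,\kappa(p-1))$ via a Moser-type iteration in the positive-exponent direction, driven by the reverse energy estimate of Lemma \ref{energyforrev}. Throughout I would work with $u+d$ in place of $u$, where
\begin{equation*}
d = \essinf_{B_r(x_0)} u + \Bigl(\tfrac{r}{R}\Bigr)^{p/(p-1)}\Tail(u_{-};x_0,R).
\end{equation*}
This calibration has a double purpose: the factor $d^{1-p}R^{-p}\Tail(u_{-};x_0,R)^{p-1}$ in the last line of \eqref{energyrevest1} is then dominated by $r^{-p}$, so the tail contribution enters the iteration as a manageable multiple of the leading local term; and at the end the additive error incurred by passing from $u+d$ back to $u$ is exactly of the form permitted on the right-hand side of \eqref{wkest}.

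The first technical step is to derive from Lemma \ref{energyforrev} and Lemma \ref{c.omega_sobo} a reverse H\"older inequality: for $q\in(1,p)$, $t=p-q\in(0,p-1)$, and concentric balls $B_\rho(x_0)\subset B_\sigma(x_0)\subset B_{3r/4}(x_0)$, I would plug a standard cut-off $\psi\in C_c^\infty(B_\sigma)$ with $\psi\equiv1$ on $B_\rho$ and $|\nabla\psi|\le 4/(\sigma-\rho)$ into \eqref{energyrevest1}, estimate the nonlocal cut-off integral exactly as in the proof of Theorem \ref{thm1} (using $r\le 1$ and the choice of $d$), and apply Sobolev to $w\psi=(u+d)^{(p-q)/p}\psi$. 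The outcome is an inequality of the type
\begin{equation*}
\biggl(\fint_{B_\rho(x_0)}(u+d)^{\kappa t}\,dx\biggr)^{1/(\kappa t)}
\le \Bigl[\tfrac{C(n,p,s,\Lambda)}{(q-1)^{p/(p-1)}}\Bigr]^{1/t}\Bigl(\tfrac{\sigma}{\sigma-\rho}\Bigr)^{n/(tp)}\biggl(\fint_{B_\sigma(x_0)}(u+d)^{t}\,dx\biggr)^{1/t}.
\end{equation*}

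The second step is a finite iteration. Given $l\in(0,\kappa(p-1))$, I build an increasing chain of exponents $\tau_0=\eta<\tau_1<\cdots<\tau_N=l$ with $\tau_{i+1}\le\kappa\tau_i$ and $\tau_i\le p-1-\delta$ for $i<N$, where $\delta>0$ is a fixed margin depending on $\kappa(p-1)-l$; the chain is matched with a geometric sequence of radii $\rho_0=3r/4>\rho_1>\cdots>\rho_N=r/2$. Applying the reverse H\"older inequality at each stage yields, by a telescoping product,
\begin{equation*}
\biggl(\fint_{B_{r/2}(x_0)}(u+d)^{l}\,dx\biggr)^{1/l}\le C\biggl(\fint_{B_{3r/4}(x_0)}(u+d)^{\eta}\,dx\biggr)^{1/\eta}.
\end{equation*}
A standard finite covering of $B_{3r/4}(x_0)$ by balls on which Lemma \ref{WeakHarnacklemma} applies (inside $B_{R/2}(x_0)$) converts the right-hand side into $c\,\essinf_{B_r(x_0)} u + c(r/R)^{p/(p-1)}\Tail(u_{-};x_0,R)$, and reabsorbing $d$ produces \eqref{wkest}.

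The main obstacle is keeping the constant finite throughout the chain. The factor $(q-1)^{-p/(p-1)}$ in \eqref{energyrevest1} blows up as $q\to 1^+$, i.e.\ precisely when $t\to(p-1)^-$; this is the reason the chain must stop strictly before $p-1$ and explains the sharp threshold $\kappa(p-1)$ in the conclusion. A secondary but essential point is that the tail and cut-off contributions must be absorbed uniformly at every step: the calibrated choice of $d$ made at the outset is exactly what makes this absorption possible, reducing the whole iteration to a telescoping estimate whose number of steps is controlled in terms of $n,p,s,\Lambda$ and the gap $\kappa(p-1)-l$.
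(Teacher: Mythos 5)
Your proposal follows essentially the same route as the paper: derive a reverse H\"older inequality for $w=(u+d)^{(p-q)/p}$ from Lemma~\ref{energyforrev} and the Sobolev inequality, with $d$ calibrated so the nonlocal tail term is absorbed into the leading $r^{-p}$ contribution, run a Moser-type iteration up to any $l<\kappa(p-1)$, and close with Lemma~\ref{WeakHarnacklemma}. One small point: your choice $d=\essinf_{B_r(x_0)}u+(r/R)^{p/(p-1)}\Tail(u_-;x_0,R)$ vanishes if both summands are zero, so you should pad it with a small $\epsilon>0$ and pass $\epsilon\to 0$ at the end (as the paper does by taking $d=\tfrac12(r/R)^{p/(p-1)}\Tail(u_-;x_0,R)+\epsilon$).
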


\begin{proof}
We prove the result for $1<p<n$. For $p\geq n$, the result follows in a similar way. Let $r\in(0,1]$, $\frac{1}{2}<\tau'<\tau\leq\frac{3}{4}$ and we choose $\psi\in C_c^{\infty}(B_{\tau r}(x_0))$ such that $0\leq\psi\leq 1\text{ in }B_{\tau r}(x_0)$, $\psi=1$ in $B_{\tau' r}(x_0)$ and $|\nabla\psi|\leq\frac{4}{(\tau-\tau')r}$.
For $d>0$ and $q\in(1,p)$, we set 
\[
v=u+d
\quad\text{and}\quad 
w=(u+d)^\frac{p-q}{p}.
\]
Noting $r\in(0,1]$, the property of $\psi$ above and the proof of the estimate $(5.11)$ in \cite[Page 1834]{KuusiHarnack}, there exists a constant $c=c(n,p,s,\Lambda)$ such that
\begin{equation}\label{revestlocal}
I_1=\int_{B_r(x_0)}w^p|\nabla\psi|^p\,dx\leq\frac{c r^{-p}}{(\tau-\tau')^p}\int_{B_{\tau r}(x_0)}w^p\,dx,
\end{equation}
\begin{equation}\label{revestmax}
I_2=\int_{B_r(x_0)}\int_{B_r(x_0)}\max\{w(x),w(y)\}^p|\psi(x)-\psi(y)|^p\,d\mu
\leq\frac{c r^{-p}}{(\tau-\tau')^p}\int_{B_{\tau r}(x_0)}w^p\,dx.
\end{equation}

Assume that $\Tail(u_{-};x_0,R)$ is positive. Then for any $\epsilon>0$ and $r\in(0,1]$ choosing 
$$
d=\frac{1}{2}\Big(\frac{r}{R}\Big)^\frac{p}{p-1}\,\Tail(u_{-};x_0,R)+\epsilon>0,
$$
and noting that
\begin{equation}\label{sup}
\esssup_{z\in\supp\psi}\int_{\mathbb{R}^n\setminus B_r(x_0)}K(z,y)\,dy\leq c(n,p,s,\Lambda)r^{-p},
\end{equation}
we obtain
\begin{equation}\label{revestnonlocal}
\begin{split}
I_3&=\bigg(\esssup_{z\in\supp\psi}\int_{\mathbb{R}^n\setminus B_r(x_0)}K(z,y)\,dy+d^{1-p}R^{-p}\Tail(u_{-};x_0,R)^{p-1}\bigg)
\int_{B_r(x_0)}w^p\psi^p\,dx\\
&\leq\frac{c(n,p,s,\Lambda)r^{-p}}{(\tau-\tau')^p}\int_{B_{\tau r}(x_0)}w^p\,dx.
\end{split}
\end{equation}
If $\Tail(u_{-};x_0,R)=0$, we can choose an arbitrary $d=\epsilon>0$ and again using \eqref{sup} the estimate in \eqref{revestnonlocal} follows.
Now using Sobolev inequality in \eqref{e.friedrich} and the fact that $\psi\equiv 1$ in $B_{\tau' r}$, $r\in(0,1]$, along with Lemma \ref{energyforrev} and the estimates \eqref{revestlocal}, \eqref{revestmax} and \eqref{revestnonlocal}, we have for $p^*=\frac{np}{n-p}$ with $1<p<n$,
\begin{equation}\label{revSobloc}
\begin{split}
\bigg(\fint_{B_{\tau' r}(x_0)} v^\frac{n(p-q)}{n-p}\,dx\bigg)^\frac{p}{p^*}
&=\bigg(\fint_{B_{\tau' r}(x_0)}w^{p^*}\,dx\bigg)^\frac{p}{p^*}
\leq\bigg(\fint_{B_{\tau r}(x_0)}|w\psi|^{p^*}\,dx\bigg)^\frac{p}{p^*}\\
&\leq(\tau r)^{p-n}\int_{B_{\tau r}(x_0)}|\nabla\,(w\psi)|^p\,dx
\leq\frac{c}{(\tau-\tau')^p}\fint_{B_{\tau r}(x_0)}w^p\,dx,
\end{split}
\end{equation}
with $c=c(n,p,s,q,\Lambda)$. Using $q\in(1,p)$ and the Moser iteration technique as in \cite[Theorem 8.18]{GT2001} and \cite[Theorem 1.2]{Tru2}, we get
\begin{equation}\label{rev1}
\bigg(\fint_{B_{\frac{r}{2}}(x_0)} v^l\,dx\bigg)^\frac{1}{l}
\leq c\bigg(\fint_{B_\frac{3r}{4}(x_0)} v^{l'}\,dx\bigg)^\frac{1}{l'},
\quad 0<l'<l<\frac{n(p-1)}{n-p}.
\end{equation}
Let $\eta\in(0,1)$ be given by Lemma \ref{WeakHarnacklemma} and then choosing $l'=\eta\in(0,1)$ and observing that
$$
\bigg(\fint_{B_{\frac{r}{2}}(x_0)}{u}^l\,dx\bigg)^\frac{1}{l}
\leq\bigg(\fint_{B_{\frac{r}{2}}(x_0)} v^l\,dx\bigg)^\frac{1}{l},
$$
we obtain from \eqref{rev1}
\begin{equation}\label{weakHarnackfinalest}
\bigg(\fint_{B_{\frac{r}{2}}(x_0)}u^l\,dx\bigg)^\frac{1}{l}
\leq c\essinf_{B_r(x_0)}  v+c\Big(\frac{r}{R}\Big)^\frac{p}{p-1}\Tail(u_{-};x_0,R),
\end{equation}
for all $0<l<\frac{n(p-1)}{n-p}$.
For any $\epsilon>0$, choosing 
$$
d=\frac{1}{2}\Big(\frac{r}{R}\Big)^\frac{p}{p-1}\Tail(u_{-};x_0,R)+\epsilon,
$$
in \eqref{weakHarnackfinalest} and letting $\epsilon\to 0$, the result follows.
\end{proof}

\section{Semicontinuity}

Before stating our results on pointwise behavior, we discuss a result from Liao \cite{Liao}.
Let $u$ be a measurable function that is locally essentially bounded below in $\Omega$. 
Let $\rho\in(0,1]$ be such that $B_\rho(y)\subset\Omega$. Assume that $a,c\in(0,1)$, $M>0$ and
$\mu_-\leq\essinf_{B_\rho(y)}u$.
Following  \cite{Liao}, we say that $u$ satisfies the property $(\mathcal{D})$, if there exists a constant $\tau\in(0,1)$ depending on $a,M,\mu_-$ and other data 
(may depend on the partial differential equation and will be made precise in Lemma \ref{DGL}), but independent of $\rho$, such that
$$
|\{u\leq\mu_-+M\}\cap B_\rho(y)|\leq\tau|B_\rho(y)|,
$$
implies that $u\geq\mu_-+aM$ almost everywhere in $B_{c\rho}(y)$.

Moreover, for $u\in L^1_{\loc}(\Omega)$, we denote the set of Lebesgue points of $u$ by
$$
\mathcal{F}=\bigg\{x\in\Omega:|u(x)|<\infty,\,\lim_{r\to 0}\fint_{B_r(x)}|u(x)-u(y)|\,dy=0\bigg\}.
$$
Note that, by the Lebesgue differentiation theorem, $|\mathcal{F}|=|\Omega|$.

The following result follows from \cite[Theorem 2.1]{Liao}.
\begin{Lemma}\label{lscthm}
Let $u$ be a measurable function that is locally integrable and locally essentially bounded below in $\Omega$. Assume that $u$ satisfies the property $(\mathcal{D})$. 
Then $u(x)=u_*(x)$ for every $x\in\mathcal{F}$, where
\[
u_*(x)=\lim_{r\to 0}\essinf_{y\in B_r(x)}u(y).
\]
In particular, $u_*$ is a lower semicontinuous representative of $u$ in $\Omega$.
\end{Lemma}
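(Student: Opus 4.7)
The plan is to establish the two inequalities $u_*(x)\le u(x)$ and $u_*(x)\ge u(x)$ at every Lebesgue point $x\in\mathcal{F}$, and then to check lower semicontinuity of $u_*$ as a separate, elementary step.

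The inequality $u_*(x)\le u(x)$ is immediate from the Lebesgue point property: for every $x\in\mathcal{F}$,
\[
\essinf_{B_r(x)}u\le \fint_{B_r(x)}u(y)\,dy\longrightarrow u(x)\quad\text{as }r\to 0.
\]
Since $r\mapsto\essinf_{B_r(x)}u$ is monotone in $r$, the defining limit $u_*(x)$ exists and lies below $u(x)$.

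For the reverse inequality, assume toward contradiction that $u_*(x_0)<u(x_0)$ at some $x_0\in\mathcal{F}$. Fix a small $\eta>0$ and set
\[
\mu_-:=u_*(x_0)-\eta,\qquad M:=\tfrac{1}{2}\bigl(u(x_0)-u_*(x_0)\bigr),\qquad a:=\tfrac{1}{2},\qquad c:=\tfrac{1}{2}.
\]
Because $\essinf_{B_\rho(x_0)}u\to u_*(x_0)$ from below as $\rho\downarrow 0$, the admissibility condition $\mu_-\le\essinf_{B_\rho(x_0)}u$ holds for all sufficiently small $\rho$. Property $(\mathcal{D})$ therefore supplies a threshold $\tau\in(0,1)$, depending on $a,c,M,\mu_-$ and the data, but independent of $\rho$. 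On the other hand $\mu_-+M=\tfrac12(u_*(x_0)+u(x_0))-\eta<u(x_0)$ for $\eta$ small, so Chebyshev's inequality together with the Lebesgue point property yields
\[
\frac{|\{u\le\mu_-+M\}\cap B_\rho(x_0)|}{|B_\rho(x_0)|}\le\frac{1}{u(x_0)-\mu_--M}\fint_{B_\rho(x_0)}|u(y)-u(x_0)|\,dy\longrightarrow 0,
\]
and this ratio eventually drops below $\tau$. Property $(\mathcal{D})$ then forces $u\ge\mu_-+aM$ almost everywhere in $B_{c\rho}(x_0)$, so
\[
u_*(x_0)\ge\essinf_{B_{c\rho}(x_0)}u\ge u_*(x_0)-\eta+\tfrac{1}{4}\bigl(u(x_0)-u_*(x_0)\bigr).
\]
Letting $\eta\to 0$ produces the contradiction $0\ge\tfrac14(u(x_0)-u_*(x_0))>0$, hence $u_*(x_0)=u(x_0)$.

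The lower semicontinuity of $u_*$ on $\Omega$ then follows from the definition: for $x_n\to x$ and any fixed $r>0$, the inclusion $B_{r/2}(x_n)\subset B_r(x)$ holds for large $n$, so $u_*(x_n)\ge\essinf_{B_{r/2}(x_n)}u\ge\essinf_{B_r(x)}u$, and taking $\liminf_n$ and then $r\to 0$ gives $\liminf_{n}u_*(x_n)\ge u_*(x)$. The real obstacle is the reverse inequality: property $(\mathcal{D})$ is only an implication conditional on a measure-smallness condition whose threshold $\tau$ depends on the chosen $a,c,M,\mu_-$, so these four parameters have to be locked in terms of $u_*(x_0)$ and $u(x_0)$ up front, before the Lebesgue point limit in $\rho$ is invoked; the choice $\mu_-=u_*(x_0)-\eta$ and $M=\tfrac12(u(x_0)-u_*(x_0))$ is precisely what guarantees both that $\mu_-+aM$ strictly exceeds $u_*(x_0)$ and that $\mu_-+M$ stays strictly below $u(x_0)$, so that the Chebyshev step actually delivers the required smallness.
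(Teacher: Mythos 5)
Your proof is correct. The paper does not spell out an argument for this lemma---it is quoted from \cite[Theorem 2.1]{Liao}---and your reasoning (the Lebesgue-point/Chebyshev estimate driving the relative measure of $\{u\le\mu_-+M\}\cap B_\rho(x_0)$ below the threshold $\tau$ of property $(\mathcal{D})$, with $\mu_-$, $M$, $a$, $c$ fixed in terms of $u(x_0)$ and $u_*(x_0)$ before letting $\rho\to0$) is essentially the same measure-theoretic argument as in that reference; the only cosmetic difference is your choice $c=\tfrac12$, whereas Lemma \ref{DGL} furnishes property $(\mathcal{D})$ with $c=\tfrac34$, which changes nothing in the argument.
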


Since $u$ is assumed to be locally essentially bounded below, the  lower semicontinuous regularization $u_*(x)$ is well defined at every point $x\in\Omega$.  
Our final regularity results stated are consequences of Lemma \ref{lscthm} and Lemma \ref{DGL} below.

\begin{Theorem}\label{lscthm1}(\textbf{Lower semicontinuity}).
Let $u$ be a weak supersolution of \eqref{maineqn}. 
Then 
\[
u(x)=u_*(x)=\lim_{r\to 0}\essinf_{y\in B_r(x)}u(y)
\]
for every $x\in\mathcal{F}$. 
In particular, $u_*$ is a lower semicontinuous representative of $u$ in $\Omega$. 
\end{Theorem}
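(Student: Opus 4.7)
The plan is to invoke Lemma \ref{lscthm} applied to $u$. Two hypotheses must be checked: that $u$ is locally integrable and locally essentially bounded below in $\Omega$, and that $u$ satisfies the property $(\mathcal{D})$. The first is immediate because $u\in L^{\infty}(\mathbb{R}^n)$ by Definition \ref{subsupsolution}. The entire content of the proof is therefore the verification of $(\mathcal{D})$, which is precisely what the forthcoming Lemma \ref{DGL} is designed to supply.

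To verify $(\mathcal{D})$ at a point $y$ with $B_\rho(y)\subset\Omega$, $\rho\in(0,1]$, I would replace $u$ by $v=u-\mu_-$. Since constants are weak solutions of \eqref{maineqn} and the equation is translation invariant in the dependent variable, $v$ remains a weak supersolution, and $v\geq 0$ in $B_\rho(y)$. The density hypothesis rewrites as $|\{v\geq M\}\cap B_\rho(y)|\geq (1-\tau)|B_\rho(y)|$. The idea is to pass this density to a smaller ball $B_r(y)$ with $r=\rho/N$, choosing an integer $N\geq 17$ so that the constraint $r<R/16$ of Lemma \ref{DGLemma} is satisfied with $R=\rho$. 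An elementary set inclusion gives
\[
|\{v\geq M\}\cap B_r(y)|\geq |B_r(y)|-\tau|B_\rho(y)|=(1-N^n\tau)|B_r(y)|,
\]
which is bounded below by $\tfrac{1}{2}|B_r(y)|$ as soon as $\tau\leq (2N^n)^{-1}$. Lemma \ref{DGLemma} then produces a constant $\delta=\delta(n,p,s,\Lambda)\in(0,\tfrac{1}{4})$ with
\[
\essinf_{B_{4r}(y)} v\;\geq\;\delta M - N^{-p/(p-1)}\,\Tail(v_-;y,\rho).
\]

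The main obstacle is absorbing the tail uniformly in $\rho$, since the constant $\tau$ in $(\mathcal{D})$ must not depend on $\rho$. Here I would use that $v_-$ is supported outside $B_\rho(y)$ and bounded by $L:=|\mu_-|+\|u\|_{L^\infty(\mathbb{R}^n)}$; a direct integration gives
\[
\Tail(v_-;y,\rho)\leq C(n,p,s)\,L\,\rho^{(p-ps)/(p-1)}\leq C(n,p,s)L
\]
whenever $\rho\leq 1$, so the tail contribution is at most $CL\,N^{-p/(p-1)}$. Selecting $N$ large enough in terms of $L/M$ and $\delta-a$ makes this smaller than $(\delta-a)M$, and then setting $c=4/N$ and $\tau=(2N^n)^{-1}$ yields constants depending only on $M$, $\mu_-$, $\|u\|_{L^\infty(\mathbb{R}^n)}$ and the structural data, but not on $\rho$. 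This verifies $(\mathcal{D})$. Lemma \ref{lscthm} then gives $u(x)=u_*(x)=\lim_{r\to 0}\essinf_{B_r(x)}u$ at every Lebesgue point, so $u_*$ is the required lower semicontinuous representative of $u$ in $\Omega$.
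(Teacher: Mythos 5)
Your proof is correct but takes a genuinely different route from the paper's. The paper verifies the De Giorgi property $(\mathcal{D})$ via Lemma \ref{DGL}, which it proves from scratch by iterating the Caccioppoli-type estimate of Lemma \ref{energyest} along a family of shrinking balls and levels $k_j$, in complete analogy with the local boundedness argument; that lemma produces $u\geq\mu_-+aM$ on $B_{3r/4}(x_0)$ for \emph{every} $a\in(0,1)$, with $\tau$ depending on $(n,p,s,\Lambda,a,M,\mu_-,\lambda_-)$. You instead recycle the already-established expansion-of-positivity Lemma \ref{DGLemma}: after the shift $v=u-\mu_-\geq 0$ on $B_\rho(y)$ (legitimate since the operator is invariant under adding constants), you transport the smallness hypothesis from $B_\rho$ to the concentric ball $B_{\rho/N}$ so that the density threshold of Lemma \ref{DGLemma} is met with $\tau_{\mathrm{DG}}=\tfrac12$, control the tail via the global bound on $u$ and the factor $(r/R)^{p/(p-1)}=N^{-p/(p-1)}$, and choose $N$ large to absorb it. This yields $(\mathcal{D})$ only for $a$ below the fixed expansion constant $\delta<\tfrac14$ and with a small contraction ratio $c=4/N$, whereas Lemma \ref{DGL} gives the full range $a\in(0,1)$ and $c=\tfrac34$; but since Liao's Theorem 2.1 ultimately needs $(\mathcal{D})$ for a single fixed pair $(a,c)$, the restriction is harmless. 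Your route is more economical in that it reuses machinery already in place for the Harnack theory; the paper's route buys a self-contained De Giorgi lemma with arbitrary $a$, which is perhaps the more natural statement to export. Two bookkeeping points worth recording: your $N$, and hence $\tau$ and $c$, also depend on $a$ (through $\delta-a$), not just on $M$, $\mu_-$, $\|u\|_{L^\infty(\R^n)}$ and structural data; and taking $\mu_-=\essinf_{B_\rho(y)}u$ gives $|\mu_-|\leq\|u\|_{L^\infty(\R^n)}$, so that $L$ and therefore $\tau$ are indeed uniform in $\rho$, which is what the application to Lemma \ref{lscthm} requires.
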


As a Corollary Theorem \ref{lscthm1}, we have the following result.
\begin{Corollary}\label{uscthm}(\textbf{Upper semicontinuity}).
Let $u$ be a weak subsolution of  \eqref{maineqn}. 
Then 
\[
u(x)=u^*(x)=\lim_{r\to 0}\esssup_{y\in B_r(x)}u(y)
\]
for every $x\in\mathcal{F}$. 
In particular, $u^*$ is an upper semicontinuous representative of $u$ in $\Omega$.
\end{Corollary}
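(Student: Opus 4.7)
The plan is to derive Corollary \ref{uscthm} directly from Theorem \ref{lscthm1} by exploiting the symmetry of Definition \ref{subsupsolution} under $u \mapsto -u$. Specifically, the observation recorded right after Definition \ref{subsupsolution} states that $u$ is a weak subsolution of \eqref{maineqn} if and only if $-u$ is a weak supersolution of \eqref{maineqn}. So I would start by setting $v = -u$, noting that $v$ is a weak supersolution to which Theorem \ref{lscthm1} applies.

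Next, I would identify the set $\mathcal{F}$ of Lebesgue points of $u$ with the Lebesgue points of $v$. This is immediate from the definition of $\mathcal{F}$, since $|v(x) - v(y)| = |u(x) - u(y)|$ and $|v(x)| = |u(x)|$, so the averages $\fint_{B_r(x)} |u(x) - u(y)|\,dy$ and $\fint_{B_r(x)} |v(x) - v(y)|\,dy$ coincide. Hence $x \in \mathcal{F}$ for $u$ exactly when $x \in \mathcal{F}$ for $v$.

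The heart of the argument is then just to translate the lower semicontinuous regularization of $v$ into the upper semicontinuous regularization of $u$. For any $x \in \Omega$ and any $r > 0$ small enough that $B_r(x) \subset \Omega$, one has
\begin{equation*}
\essinf_{y \in B_r(x)} v(y) = \essinf_{y \in B_r(x)} (-u(y)) = -\esssup_{y \in B_r(x)} u(y).
\end{equation*}
Passing to the limit $r \to 0$ gives $v_*(x) = -u^*(x)$. Applying Theorem \ref{lscthm1} to $v$ yields $v(x) = v_*(x)$ for every $x \in \mathcal{F}$, which rewrites as $-u(x) = -u^*(x)$, i.e., $u(x) = u^*(x)$ for every $x \in \mathcal{F}$.

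Finally, since $v_*$ is a lower semicontinuous representative of $v$ in $\Omega$ by Theorem \ref{lscthm1}, and lower semicontinuity of a function $g$ is equivalent to upper semicontinuity of $-g$, the function $u^* = -v_*$ is an upper semicontinuous representative of $u$ in $\Omega$. There is no serious obstacle in this argument; the only minor point worth stating explicitly is the equality $\mathcal{F}_u = \mathcal{F}_{-u}$ of Lebesgue point sets and the commutation of $\essinf$ with sign change, both of which are elementary measure-theoretic facts.
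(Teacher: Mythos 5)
Your argument is correct and is exactly the intended one: the paper presents the corollary as an immediate consequence of Theorem \ref{lscthm1} via the symmetry $u\mapsto -u$ between weak subsolutions and weak supersolutions, just as you do. The only supporting observations you add — that $\mathcal{F}$ is unchanged under sign change and that $\essinf(-u)=-\esssup u$ — are the correct elementary facts that make the reduction rigorous.
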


We prove a De Giorgi type lemma for weak supersolutions of \eqref{maineqn}.

\begin{Lemma}\label{DGL}
Let $u$ be a weak supersolution of \eqref{maineqn}. 
Let $M>0$, $a\in(0,1)$, $B_r(x_0)\subset\Omega$ with $r\in(0,1]$ and
$\mu_-\leq\essinf_{B_r(x_0)}\,u,\,\lambda_-\leq\essinf_{\R^n}\,u$.
There exists a constant $\tau=\tau(n,p,s,\Lambda,a,M,\mu_-,\lambda_-)\in(0,1)$ such that if
$$
|\{u\leq\mu_-+M\}\cap B_r(x_0)|\leq\tau|B_r(x_0)|,
$$
then $u\geq\mu_-+aM$ almost everywhere in $B_{\frac{3r}{4}(x_0)}$.
\end{Lemma}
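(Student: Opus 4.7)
The plan is to run a De Giorgi iteration in the spirit of Step 2 in the proof of Lemma \ref{DGLemma}, but now on a decreasing sequence of \emph{sub}-levels of $u$, shrinking to $\mu_{-}+aM$, and on balls shrinking from $B_r(x_0)$ to $B_{3r/4}(x_0)$. Since $u+c$ is a weak supersolution whenever $u$ is, I may replace $u$ by $u-\mu_-$ and assume $\mu_-=0$, so that $u\geq 0$ in $B_r(x_0)$ and $u\geq\lambda_--\mu_-=:\lambda$ in $\mathbb{R}^n$. For $j=0,1,2,\dots$ set
\[
r_j=\tfrac{3r}{4}+\tfrac{r}{4}2^{-j},\quad \bar r_j=\tfrac{r_j+r_{j+1}}{2},\quad B_j=B_{r_j}(x_0),\quad \bar B_j=B_{\bar r_j}(x_0),
\]
together with levels $k_j=aM+(1-a)M\,2^{-j}$ and truncations $w_j=(u-k_j)_-=(k_j-u)_+$. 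Choose cutoffs $\psi_j\in C_c^{\infty}(\bar B_j)$ with $\psi_j\equiv 1$ on $B_{j+1}$ and $|\nabla\psi_j|\leq c\,2^j/r$. Set $A_j=B_j\cap\{u\leq k_j\}=B_j\cap\{w_j>0\}$ and $Y_j=|A_j|/|B_j|$.

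Next, apply Lemma \ref{energyest} for weak supersolutions with $w=w_j$ and $\psi=\psi_j$ to bound $\int_{B_j}\psi_j^p|\nabla w_j|^p\,dx$. Three pieces must be estimated. First, since $w_j\leq k_j\leq M$, the term $\int_{B_j}w_j^p|\nabla\psi_j|^p\,dx$ is controlled by $c\,2^{jp}r^{-p}M^p|A_j|$. Second, the double integral term is bounded through the standard argument $|\psi_j(x)-\psi_j(y)|^p\leq c\,2^{jp}r^{-p}|x-y|^p$ combined with $\int_{B_j}|x-y|^{p-n-ps}\,dy\leq c\,r^{p(1-s)}$, yielding $c\,2^{jp}r^{-ps}M^p|A_j|$. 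Third, for the tail term we split $\mathbb{R}^n\setminus B_j=(B_r(x_0)\setminus B_j)\cup(\mathbb{R}^n\setminus B_r(x_0))$: on the first set $w_j(y)\leq k_j\leq M$ because $u\geq 0$ there, while on the second set $w_j(y)\leq k_j+(u(y))_-\leq M+\lambda_-$, with $\lambda_-=(\mu_--\lambda_-)_+$ (here we use the \emph{global} lower bound $\lambda_-$ on $u$). Using $|y-x|\geq 2^{-j-3}r$ for $x\in\mathrm{supp}\,\psi_j$, the tail integral is dominated by $c\,2^{j(n+ps)}r^{-ps}\bigl(M+(\mu_--\lambda_-)_+\bigr)^{p-1}$; multiplied by $\int_{B_j}w_j\psi_j^p\,dx\leq M|A_j|$ and absorbed using $r^{-ps}\leq r^{-p}$ for $r\in(0,1]$. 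Altogether,
\[
\int_{B_j}\psi_j^p|\nabla w_j|^p\,dx\leq c_\star\,2^{j(n+ps+p)}r^{-p}|A_j|,
\]
where $c_\star$ depends on $n,p,s,\Lambda,M$ and $\mu_--\lambda_-$.

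Finally, combine this with the Sobolev inequality \eqref{e.friedrich} applied to $w_j\psi_j$ (extended by zero). On $A_{j+1}$ one has $w_j\geq k_j-k_{j+1}=2^{-j-2}(1-a)M$, so
\[
\bigl(2^{-j-2}(1-a)M\bigr)^p |A_{j+1}|\,|B_{j+1}|^{-1/\kappa}\cdot |B_{j+1}|^{1/\kappa}\leq\Bigl(\int_{B_j}(w_j\psi_j)^{\kappa p}\,dx\Bigr)^{1/\kappa}\leq C r^p\int_{B_j}|\nabla(w_j\psi_j)|^p\,dx.
\]
Dividing by $|B_{j+1}|$ and using the previous gradient estimate together with $|\nabla\psi_j|^p w_j^p$-type estimates produces the iterative inequality
\[
Y_{j+1}\leq C_0\,b^{j}\,Y_j^{\kappa},
\]
with $b=2^{\gamma}$ for some $\gamma=\gamma(n,p,s)$ and $C_0=C_0(n,p,s,\Lambda,a,M,\mu_--\lambda_-)$. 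Applying Lemma \ref{iteration} with $\beta=\kappa-1$, the choice
\[
\tau=C_0^{-1/\beta}b^{-1/\beta^2}
\]
guarantees that $Y_0\leq\tau$ forces $Y_j\to 0$. Since $\bigcap_j\{u\leq k_j\}=\{u\leq aM\}$ and $\bigcap_j B_{r_j}$ has the same measure as $B_{3r/4}(x_0)$, this yields $u\geq aM$ a.e.\ on $B_{3r/4}(x_0)$, which on reverting $\mu_-\to\mu_-$ is the claim. The main obstacle is the tail estimate for $T_3$: unlike in Lemma \ref{DGLemma}, where positivity propagates from a super-level set, here there is no decay of $(u-k_j)_-$ at infinity, so one must use the assumption $u\geq\lambda_-$ on $\mathbb{R}^n$ to keep $w_j$ globally bounded; this is precisely why the constant $\tau$ depends quantitatively on $M$ and $\mu_--\lambda_-$.
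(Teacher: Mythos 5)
Your proposal is correct and follows essentially the same route as the paper's proof: you run the same De Giorgi iteration with shrinking balls $r_j\downarrow 3r/4$ and decreasing levels $k_j\downarrow \mu_-+aM$, apply Lemma~\ref{energyest} to $w_j=(k_j-u)_+$, and—crucially—use the global lower bound $\lambda_-$ to bound $w_j$ uniformly on $\R^n$ so the nonlocal tail term is controlled, then close with Sobolev and Lemma~\ref{iteration}. The only (harmless) deviations are cosmetic: you normalize $\mu_-=0$, you apply the Sobolev inequality directly to $(w_j\psi_j)^{\kappa p}$ (getting the iteration exponent $\kappa$, i.e.\ $\beta=\kappa-1$) whereas the paper interposes H\"older and gets $\beta=\tfrac1p(1-\tfrac1\kappa)$, and there is a small notational slip in the display where $|A_{j+1}|$ should read $|A_{j+1}|^{1/\kappa}$, which does not affect the final iteration $Y_{j+1}\le C_0 b^j Y_j^{\kappa}$.
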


\begin{proof}
Without loss of generality, we may assume that $x_0=0$. For $j=0,1,2,\dots$, we denote
\begin{equation}\label{parameter1}
\begin{split}
k_j&=\mu_{-}+aM+\frac{(1-a)M}{2^j},
\quad
\bar{k}_j=\frac{k_j+k_{j+1}}{2},\\
\quad
r_j&=\frac{3r}{4}+\frac{r}{2^{j+2}},
\quad\bar{r}_j=\frac{r_j+r_{j+1}}{2},
\end{split}
\end{equation}
$B_j=B_{r_j}(0)$, $\bar{B}_j=B_{\bar{r}_j}(0)$, $w_j=(k_j-u)_{+}$ and $\bar{w}_j=(\bar{k}_j-u)_{+}$.
We observe that $B_{j+1}\subset\bar{B}_j\subset B_j$, $\bar{k}_j<k_j$ and hence $\bar{w}_j\leq w_j$. 
Let $(\psi_{j})_{j=0}^{\infty}\subset C_{c}^{\infty}(\bar{B}_j)$ be a sequence of cutoff functions satisfying $0 \leq \psi_{j} \leq 1$ in $B_j$, $\psi_{j}= 1$ in $B_{j+1}$,
$|\nabla\psi_j|\leq\frac{2^{j+3}}{r}$.

By applying Lemma \ref{energyest} to $w_j$, we obtain
\begin{equation}\label{energyapp2}
\int_{B_j}|\nabla w_j|^p \psi_j^p\,dx\leq
C(n,p,s,\Lambda)(I_1+I_2+I_3),
\end{equation}
where
\[
I_1=\int_{B_{j}}\int_{B_{j}}{\max\{w_j(x),w_j(y)\}^p|\psi_{j}(x)-\psi_{j}(y)|^p}\,d\mu,
\quad I_2=\int_{B_j}w_j^p|\nabla\psi_j|^p\,dx
\]
and
\[
I_3=\esssup_{x\in\supp\psi_j}\int_{{\mathbb{R}^n\setminus B_{j}}}{\frac{w_j(y)^{p-1}}{|x-y|^{n+ps}}}\,dy
\cdot\int_{B_{j}}w_j\psi_j^p\,dx.
\]
Since $u\geq \lambda_-$ in $\R^n$, noting the definition of $k_j$ from above, we have
\begin{equation}\label{nelsc}
w_j=(k_j-u)_+\leq (M+\mu_--\lambda_-)_+=L\text{ in }\R^n. 
\end{equation}
Let $A_j=\{u<k_j\}\cap B_j$. We estimate the terms $I_j$, for $j=1,2,3$, separately.\\
\textbf{Estimate of $I_1$:} Using $w_j\leq L$ from \eqref{nelsc} along with $\frac{r}{2}<r_j<r$, $r\in(0,1]$ and the properties of $\psi_j$, we obtain
\begin{equation}\label{estI1}
I_1=\int_{B_{j}}\int_{B_{j}}{\max\{w_j(x),w_j(y)\}^p|\psi_{j}(x)-\psi_{j}(y)|^p}\,d\mu
\leq C\frac{2^{jp}}{r^{p}}L^p|A_j|,
\end{equation}
with $C=C(n,p,s,\Lambda)$.\\
\textbf{Estimate of $I_2$:} Using the properties of $\psi_j$ and the fact that $w_j\leq L$ from \eqref{nelsc}, we have
\begin{equation}\label{estI2}
I_2=\int_{B_{j}}w_j^p|\nabla \psi_j|^p\,dx\\
\leq C\frac{2^{jp}}{r^{p}}L^p|A_j|,
\end{equation}
with $C=C(n,p,s,\Lambda)$.\\
\textbf{Estimate of $I_3$:} For every $x\in\supp\psi_{j}$ and every $y\in\mathbb{R}^n\setminus B_j$, we observe that
\begin{equation}\label{I2}
\frac{1}{|x-y|}=\frac{1}{|y|}\frac{|x-(x-y)|}{|x-y|}\leq\frac{1}{|y|}(1+2^{j+5})\leq\frac{2^{j+6}}{|y|}.
\end{equation}
Then using $r_j>\frac{r}{2}$, $w_j\leq L$, $0\leq\psi_j \leq 1$, we obtain
\begin{equation}\label{estI3final}
I_3=\esssup_{x\in\supp\psi_j}\int_{{\mathbb{R}^n\setminus B_{j}}}{\frac{w_j(y)^{p-1}}{|x-y|^{n+ps}}}\,dy
\cdot\int_{B_{j}}w_j\psi_j^p\,dx
\leq C(n,p,s)\frac{2^{j(n+ps)}}{r^{p}}L^p|A_j|,
\end{equation}
for every $r\in(0,1]$.
By using \eqref{estI1}, \eqref{estI2} and \eqref{estI3final} in \eqref{energyapp2}, we have
\begin{equation}\label{energyapp4}
\begin{split}
\int_{B_j}|\nabla w_j|^p \psi_j^p\,dx\leq C\frac{2^{j(n+ps+p)}}{r^{p}}L^p|A_j|,
\end{split}
\end{equation}
with $C=C(n,p,s,\Lambda)$.
Noting that $B_{j+1}\subset\bar{B}_j\subset B_j$, $\bar{w}_j\leq w_j$ and using the Sobolev inequality in \eqref{e.friedrich}, we obtain  
\begin{equation}\label{dist}
\begin{split}
\frac{(1-a)M}{2^{j+2}}|A_{j+1}|&=\int_{A_{j+1}}(\bar{k}_j-k_{j+1})\,dx
\leq\int_{B_{j+1}}\bar{w}_j\,dx\\
&\leq\int_{B_{j+1}}w_j\,dx
\leq|A_j|^{1-\frac{1}{p\kappa}}\Big(\int_{B_j}w_j^{p\kappa}\psi_j^{p\kappa}\,dx\Big)^\frac{1}{p\kappa}\\
&\leq C r^{1+\frac{n}{p\kappa}-\frac{n}{p}}|A_j|^{1-\frac{1}{p\kappa}}\Big(\int_{B_j}|\nabla(w_j\psi_j)|^p\,dx\Big)^\frac{1}{p},
\end{split}
\end{equation}
with $C=C(n,p,s)$ and $\kappa$ as given in \eqref{kappa}.
By using  \eqref{energyapp4} together with the fact that $w_j\leq L$ and the properties of $\psi_j$ in \eqref{dist}, we get 
\begin{equation}\label{dist8}
\begin{split}
\frac{(1-a)M}{2^{j+2}}|A_{j+1}|\leq C(n,p,s,\Lambda) L r^{\frac{n}{p}(\frac{1}{\kappa}-1)}2^\frac{j(n+ps+p)}{p} |A_j|^{1-\frac{1}{p\kappa}+\frac{1}{p}}.
\end{split}
\end{equation}
Hence, we obtain from \eqref{dist8} that 
\begin{equation}\label{dist7}
\begin{split}
|A_{j+1}|&\leq \frac{C(n,p,s,\Lambda)L r^{\frac{n}{p}(\frac{1}{\kappa}-1)}}{(1-a)M}2^{j(2+\frac{n+ps}{p})}|A_j|^{1-\frac{1}{p\kappa}+\frac{1}{p}}.
\end{split}
\end{equation}
By dividing both sides of \eqref{dist7} with $|B_{j+1}|$ and noting that $|B_j|<2^{n}|B_{j+1}|$ together by $r_j<r$, we obtain
\begin{equation}\label{dist4}
\begin{split}
Y_{j+1}&\leq\frac{C(n,p,s,\Lambda)L}{(1-a)M}2^{j(2+\frac{n+ps}{p})}Y_j^{1+\frac{1}{p}(1-\frac{1}{\kappa})},
\end{split}
\end{equation}
where we denoted $Y_j=\frac{|A_j|}{|B_j|}$, $j=0,1,2,\dots$. By choosing
\begin{align*}
c_0&=\frac{C(n,p,s,\Lambda)L}{(1-a)M},
\quad
b=2^{(2+\frac{n+ps}{p})},
\quad
\beta=\frac{1}{p}\Big(1-\frac{1}{\kappa}\Big),
\quad
\tau=c_0^{-\frac{1}{\beta}}b^{-\frac{1}{\beta^2}}\in(0,1)
\end{align*}
in Lemma \ref{iteration} gives $Y_j\to0$ as $j\to\infty$, if $Y_0\leq\tau$.
This implies that  $u\geq\mu_{-}+aM$ almost everywhere in $B_{\frac{3r}{4}}(0)$.
\end{proof}

\medskip

\noindent
Prashanta Garain\\
Department of Mathematics\\
Ben-Gurian University of the Negev\\
P.O.B.-653\\
Be'er Sheva-8410501, Israel\\
Email: pgarain92@gmail.com\\

\noindent
Juha Kinnunen\\
Department of Mathematics\\
P.O. Box 11100\\
FI-00076 Aalto University, Finland\\
Email: juha.k.kinnunen@aalto.fi

\end{document}